\definecolor{myblue}{rgb}{0,0,0.6}         % used for links in hyperref
\definecolor{gray}{rgb}{0.5,0.5,0.5}
\newcommand*{\der}[2]{\frac{\partial #1}{\partial #2}}
\newcommand{\di}{\,\mathrm{d}}                              % for integrals
\newcommand{\iin}{\;\text{in}\;}
\newcommand{\oon}{\;\text{on}\;}
\newcommand{\deO}{{\partial\Omega}}
\newcommand*{\N}[1]{\left\|#1\right\|}
\newcommand*{\abs}[1]{\left|#1\right|}
\newcommand{\Tnorm}[1]{|||#1|||}
\newcommand*{\jmp}[1]{[\![#1]\!]}                     % jump
\newcommand*{\mvl}[1]{\{\!\!\{#1\}\!\!\}}             % mean value
\newcommand{\dive} {\mathop{\rm div}\nolimits}
\DeclareMathOperator{\im}{Im} %
\DeclareMathOperator{\spn}{span}
\newcommand{\Uu}[1]{{\mathbf{#1}}}                   % bf which scales size for latin letters
\newcommand{\IC}{\mathbb{C}}
\newcommand{\IN}{\mathbb{N}}\newcommand{\IP}{\mathbb{P}}
\newcommand{\IR}{\mathbb{R}}
\newcommand{\bg}{{\Uu g}}
\newcommand{\bn}{{\Uu n}}
\newcommand{\bu}{{\Uu u}}
\newcommand{\bx}{{\Uu x}}
\newcommand{\bE}{{\Uu E}}
\newcommand{\bH}{{\Uu H}}
\newcommand{\bN}{{\Uu N}}
\newcommand{\bT}{{\Uu T}}
\newcommand{\bV}{{\Uu V}}
\newcommand{\bsigma}{{\boldsymbol \sigma}}\newcommand{\btau}{{\boldsymbol\tau}}
           \newcommand{\bzero}{\Uu{0}}
\newcommand{\calE}{{\mathcal E}}\newcommand{\calF}{{\mathcal F}}
\newcommand{\calO}{{\mathcal O}}
\newcommand{\calR}{{\mathcal R}}
\newcommand{\calS}{{\mathcal S}}\newcommand{\calT}{{\mathcal T}}
\newcommand{\malpha}{{\boldsymbol{\alpha}}}
  \theoremstyle{plain}
\newtheorem{theorem}{Theorem}[section]
\newtheorem{lemma}[theorem]{Lemma}
\newtheorem{prop}[theorem]{Proposition}
\newtheorem{proposition}[theorem]{Proposition}
\newtheorem{cor}[theorem]{Corollary}
  \theoremstyle{definition}
  \theoremstyle{remark}
\newtheorem{rem}[theorem]{Remark}
\newcommand{\beq}{\begin{equation}}      \newcommand{\eeq}{\end{equation}}
\newcommand{\beqs}{\begin{equation*}}    \newcommand{\eeqs}{\end{equation*}}
\newcommand{\bit}{\begin{itemize}}       \newcommand{\eit}{\end{itemize}}
\newcommand{\ben}{\begin{enumerate}}     \newcommand{\een}{\end{enumerate}}
\newcommand{\bal}{\begin{align}}         \newcommand{\eal}{\end{align}}
\newcommand{\bals}{\begin{align*}}       \newcommand{\eals}{\end{align*}}
\newcommand{\bse}{\begin{subequations}}	 \newcommand{\ese}{\end{subequations}}
\newcommand{\bpr}{\begin{proposition}}   \newcommand{\epr}{\end{proposition}}
\newcommand{\bre}{\begin{remark}}        \newcommand{\ere}{\end{remark}}
\newcommand{\bpf}{\begin{proof}}         \newcommand{\epf}{\end{proof}}
\newcommand{\ble}{\begin{lemma}}         \newcommand{\ele}{\end{lemma}}
\newcommand{\bco}{\begin{corollary}}     \newcommand{\eco}{\end{corollary}}
\newcommand{\bex}{\begin{example}}       \newcommand{\eex}{\end{example}}
\newcommand{\bth}{\begin{theorem}}       \newcommand{\enth}{\end{theorem}}
\newcommand{\Fh}{\calF_h}
\newcommand{\Fhor}{{\Fh^{\text{hor}}}}
\newcommand{\Fver}{{\Fh^{\text{ver}}}}
\newcommand{\FT}{{\Fh^T}}
\newcommand{\FO}{{\Fh^0}}
\newcommand{\FL}{{\Fh^L}}
\newcommand{\FR}{{\Fh^R}}
\newcommand\Ehp{E_{hp}}
\newcommand\Hhp{H_{hp}}
\newcommand\hEhp{\widehat E_{hp}}
\newcommand\hHhp{\widehat H_{hp}}
\newcommand{\vE}{v_E}
\newcommand{\vH}{v_H}
\newcommand{\wE}{w_E}
\newcommand{\wH}{w_H}
\newcommand{\Th}{{(\calT_h)}}
\newcommand{\tvE}{{{\widetilde v}_E}}
\newcommand{\tvH}{{{\widetilde v}_H}}
\newcommand{\tphi}{{\widetilde\phi}}
\newcommand{\tpsi}{{\widetilde\psi}}
\newcommand{\ODp}{{\Omega_D^+}}
\newcommand{\ODm}{{\Omega_D^-}}
\newcommand{\ODpm}{{\Omega_D^\pm}}
\newcommand{\epsil}{{\varepsilon}}
\newcommand{\Cstab}{C_{\mathrm{stab}}}
\newcommand{\derp}[2]{{\frac{\partial #1}{\partial #2}}}
\newcommand\bEhp{\mathbf{E}_{hp}}
\newcommand\bHhp{\mathbf{H}_{hp}}
\newcommand\hbEhp{\widehat \bE_{hp}}
\newcommand\hbHhp{\widehat \bH_{hp}}
\newcommand\bvE{\mathbf{v}_E}
\newcommand\bvH{\mathbf{v}_H}
\newcommand{\Fd}{{\Fh^\partial}}
\newcommand\Vhp{v_{hp}}
\newcommand\Shp{\bsigma_{hp}}
\title{A Priori Error Analysis of Space--Time Trefftz Discontinuous Galerkin Methods for Wave Problems}
\author{Fritz Kretzschmar\thanks{Graduate School of Computational Engineering, TU Darmstadt, Dolivostrasse 15, 64293 Darmstadt, Germany}\;\thanks{Institut f\"ur Theorie Elektromagnetischer Felder, TU Darmstadt, Schlossgartenstrasse 8, 64289 Darmstadt, Germany}, 
Andrea Moiola\thanks{Department of Mathematics and Statistics, University of Reading, Whiteknights PO Box 220, Reading RG6 6AX, UK (\texttt{a.moiola@reading.ac.uk}) (corresponding author)},
Ilaria Perugia\thanks{Faculty of Mathematics, University of Vienna,
  1090 Vienna, Austria, and Department of Mathematics, University of
  Pavia, 27100 Pavia, Italy (\texttt{ilaria.perugia@univie.ac.at})}, 
Sascha M.~Schnepp\footnote{Institute of Geophysics, Department of Earth Sciences, ETH Zurich, CH-8092 Zurich, Switzerland}
}
\date{\today}
\begin{document}

\maketitle

\begin{abstract}
We present and analyse a space--time discontinuous Galerkin method for wave propagation problems.
The special feature of the scheme is that it is a Trefftz method, namely that trial and test functions are solution of the partial differential equation to be discretised in each element of the (space--time) mesh.
The method considered is a modification of the discontinuous Galerkin schemes of Kretzschmar et al.\ \cite{KSTW2014} and of Monk and Richter \cite{MoRi05}.
For Maxwell's equations in one space dimension, we prove stability of the method, quasi-optimality, best approximation estimates for polynomial Trefftz spaces and (fully explicit) error bounds with high order in the meshwidth and in the polynomial degree. 
The analysis framework also applies to scalar wave problems and Maxwell's equations in higher space dimensions.
Some numerical experiments demonstrate the theoretical results proved and the faster convergence compared to the non-Trefftz version of the scheme.

\

\noindent\textbf{Keywords:\;} 
Discontinuous Galerkin method,
Trefftz method,
space--time finite elements, 
wave propagation,
Maxwell equations,
a priori error analysis,
approximation estimates.

\

\noindent\textbf{AMS subject classification:\;}  
65M60, % NA>PDEs initial value and time-dependent initial-boundary value problems>Finite elements...
65M15, % NA>PDEs initial value and time-dependent initial-boundary value problems>Error bounds
41A10, % Approximation by polynomials 
41A25, % Rate of convergence, degree of approximation.......
%35L05, % PDE>Wave eq ?
35Q61. % PDE>Maxwell eq ?
\end{abstract}
 
\section{Introduction}

In this paper we analyse the Trefftz discontinuous Galerkin (Trefftz-DG) finite
element method for the space--time discretisation of time-dependent
wave propagation problems. 

Space--time finite element approximations of time-dependent wave
problems constitute a %valid 
viable alternative to methods %based on space finite elements and 
based on finite element discretisations in space combined with time-stepping schemes. 
They provide a setting where high-order
approximation in both space and time is obtained by simply
increasing the order of the %space--time 
basis functions, spectral convergence of the error in the space--time domain can be achieved by $p$-refinement, and
the $hp$-refinement techniques developed for spatial discretisations
can be directly imported to space--time meshes and polynomial spaces.
In particular, when local mesh refinement in space--time is
implemented, the smallest space elements do not put constraints on the
CFL condition in the rest of the domain.
%In particular, local mesh refinement in space--time can be implemented without constraining to the CFL condition relative to the smallest space elements the time step in the rest of the domain.
Space--time finite elements with continuous space discretisations were introduced in \cite{HughesHulbert1998,HulbertHughes1990,French1993,FrenchPeterson1996,ThompsonPinsky1996,Jo93} (several references to earlier works can be found in \cite{HulbertHughes1990}); space--time
DG methods have been studied in \cite{FalkRichter1999,MoRi05,VVV2002,CostanzoHuang05,Lilienthal:2014fs,GrMo14}.

In order to reduce the number of degrees of freedom needed to obtain a given accuracy, 
the space--time DG approach can be combined with the use of Trefftz approximating spaces, 
namely discrete spaces whose elements are piecewise solutions of the equation to be discretised.

While the literature on Trefftz finite elements for time-harmonic wave
propagation problems is nowadays quite developed (see e.g.\ \cite{CED98,MOW99,TEF06,FHH03,GAB07,HMK02,Deckers2014} %BAM97,
for different approaches using Trefftz-type basis functions and
e.g.\ \cite{BUM07,GHP09,PVersion,HMP12} for theoretical analyses),
Trefftz methods for time-dependent wave problems are relatively new.
For time-dependent scalar wave problems, 
a global Trefftz approach was first introduced in \cite{Maciag05}.
Space--time Trefftz methods enforcing the continuity
constraints by Lagrange multipliers have been designed, based on the second order in time
formulation \cite{PFT09,WTF14}, while a Trefftz-DG method has been introduced in \cite{KSTW2014} for the electromagnetic wave problem in the one-dimensional spatial case and extended in~\cite{EKSTWtransparent}
to the multidimensional case and to transparent boundary conditions. 
Numerical tests have shown that
these methods actually deliver high-order space--time convergence, but
no theoretical analysis was available. 
% \ip{A more recent interior penalty Trefftz-DG method for the wave
%   equation based on the
%   second order formulation has been studied in
%  [BanjaiGeorgoulis???].\ipnote{Is there anything that we can cite?}}

Here, we focus on the discontinuous Galerkin approach, and we aim at
developing an error analysis of space--time Trefftz-DG methods, based
upon the framework developed in \cite{PVersion} for time-harmonic wave
problems.

Following \cite{KSTW2014}, the model problems we consider are the
time-dependent Maxwell equations in one space dimension, 
with piecewise constant material coefficients and with
either Dirichlet or Robin boundary conditions; they also cover the
case of 1D scalar wave problems formulated as a first order system
(see Remark~\ref{rem:WaveEq}).
We prove that the Trefftz-DG method is well-posed and quasi-optimal,
  as its bilinear form is coercive and continuous in a DG norm, that
  the $L^2$ norm of the error is controlled, and that the scheme is
  dissipative.
Since the Trefftz-DG method we consider is not of interior
  penalty type, no inverse estimates are needed, thus our scheme and
  its analysis also work with non-polynomial Trefftz approximations.
The analysis framework of section~\ref{s:Analysis} immediately extends %can be extended 
to both scalar wave problems and Maxwell's equations in higher space
dimensions, see Remark~\ref{rem:HigherD}. %\ipnote{Next two sentences commented.}
%A 1D argument is used only in the proof of Lemma~\ref{lem:DualStability}, which is instrumental to the proof of $L^2$-stability.
%This is also the only point in the theoretical analysis where we assume the material properties to be
%constant throughout the domain. 
What is specific to the 1D spatial case, instead, are the best
approximation properties of space--time Trefftz finite element spaces,
and therefore 
the error convergence rates
(see sections~\ref{s:BestApprox} and~\ref{s:ConvRates}).

For the case of polynomial approximations, we prove that the Trefftz-DG method has a much better asymptotic behaviour in terms of accuracy per
number of degrees of freedom, for both the $h$- and the $p$-versions, than standard methods 
using full polynomial discrete spaces.
%One of the main motivations for the use of a Trefftz method, 
%as opposed to standard method 
%using full polynomial spaces as test and trial
%spaces, is a much better asymptotic behaviour in terms of accuracy per
%number of degrees of freedom, for both the $h$- and the $p$-version.
The approximation bounds for Trefftz spaces and those for full polynomial
spaces
%ipnote{For Andrea: I have commented the reference to Remark~3.4.2 of Quarteroni\&Valli (it refers to the $L^\infty$ norm; here is not clear which norm we are considering).} %(e.g.\ \cite[Remark~3.4.2]{QuarteroniValli94}) 
have the same order
$h^{p+1}$, with $h$ the meshsize and $p$ the polynomial degree, but a
local full polynomial space of degree $p$ for the approximation of
$(E,H)$  has dimension $(p+1)(p+2)=\calO(p^2)$,
while the corresponding Trefftz space has only dimension
$2p+2=\calO(p)$.
The constants in the final error bounds for the Trefftz-DG method (see Theorem \ref{th:Convergence}) are completely explicit.
{A further advantage of the use of a Trefftz method is that its variational formulation only involves integrals on the skeleton of the space--time mesh, thus avoiding the computational burden of the calculation of volume integrals.}
We also present some numerical tests confirming these theoretical results, highlighting
the faster convergence (both in the meshwidth and in the polynomial degree) compared to the non-Trefftz version of the method, and 
the mild dependence of the numerical error on the flux stabilisation parameters,
which we introduce for analysis purposes.

The article is structured as follows.
In section \ref{s:IVP} we introduce the initial boundary value problems to be discretised
and in section \ref{s:Discretisation} we describe the DG formulation and its Trefftz version.
In section \ref{s:Analysis} we prove the well-posedness of the Trefftz-DG method and the a priori error bounds in DG and $L^2$ norms.
In section \ref{s:BestApprox} we define local polynomial Trefftz
spaces and prove best approximation estimates in anisotropic space--time Sobolev norms
and in section~\ref{s:ConvRates} we combine the previous results into $hp$-error bounds for the Trefftz-DG scheme.
Finally, in section~\ref{s:numer-exper} we show the results of some
numerical experiments and in section \ref{s:conclusions} we outline some possible extensions of the scheme and of its analysis.
Appendix \ref{appendix} contains a different proof of the stability bound needed to control the $L^2$ norm of the error; this proof gives a better constant than that in section \ref{ss:L2} but holds only for constant coefficients.

\section{Model problems}\label{s:IVP}

In this section, we introduce the model problems we are going to
  consider in the rest of this paper, namely the time-dependent
  Maxwell equations in one space dimension, with either
  perfectly electrically conducting or absorbing boundary conditions.
We refer to Remark~\ref{rem:HigherD} below for the
  three-dimensional case, while the case of the acoustic wave problem is
  discussed in Remark~\ref{rem:WaveEq} (in one space dimension) and again
  in Remark~\ref{rem:HigherD} (in any space dimension).

Given a space domain $\Omega=(x_L,x_R)$ and a time domain $I=(0,T)$,
we set $Q:=\Omega\times I$. We denote by $\bn_Q=(n_Q^x,n_Q^t)$ the
outward pointing unit normal vector on $\partial Q$.

We assume the electric permittivity $\epsil=\epsil(x)$ and the magnetic
permeability $\mu=\mu(x)$ to satisfy $\epsil(x)\ge\epsil_*>0$,
$\mu(x)\ge\mu_*>0$, and to be piecewise constant in $\Omega$.
The speed of light in the material is
$c(x):=(\epsil(x)\mu(x))^{-1/2}$.

We consider the Maxwell equations for an electromagnetic wave
propagating along the direction $x$, in terms of the one-component electric and magnetic
fields $E=E_y$ and $H=H_z$, respectively.

In the case of Dirichlet boundary conditions, the corresponding initial boundary value problem reads as
\begin{align}\label{eq:IVP}
\begin{aligned}
&\der{E}x+\der{(\mu H)}t = 0 &&\iin Q,\\
&\der{H}x+\der{(\epsil E)}t = J &&\iin Q,\\
&E(\cdot,0)=E_0, \quad
H(\cdot,0)=H_0 &&\oon \Omega,\\
&E(x_L,\cdot)=E_L,\quad
E(x_R,\cdot)=E_R &&\oon I,
\end{aligned}
\end{align}
assuming sufficiently regular data $J,E_0,H_0,E_L,E_R$.
The case $E_L=E_R=0$ models a perfectly electric conductor.

We also consider the case of absorbing (Robin) boundary conditions:
%
%Second problem considered: Robin absorbing boundary conditions:
\begin{align}\label{eq:RobinIVP}
\begin{aligned}
&\der{E}x+\der{(\mu H)}t = 0 &&\iin Q,\\
&\der{H}x+\der{(\epsil E)}t = J &&\iin Q,\\
&E(\cdot,0)=E_0, \quad
H(\cdot,0)=H_0 &&\oon \Omega,\\
&\epsil^{1/2} E(x_L,\cdot)+\mu^{1/2}H(x_L,\cdot)=g_L&&\oon I,\\
&\epsil^{1/2} E(x_R,\cdot)-\mu^{1/2}H(x_R,\cdot)=g_R &&\oon I.
\end{aligned}
\end{align}

\begin{rem}\label{rem:WaveEq}
The case of the scalar wave equation in one space dimension
\begin{equation}\label{eq:WaveEq}
\der{{}^2U}{t^2} -c^2 \der{{}^2U}{x^2}  =c^2 J
\end{equation}
can be traced back to either~\eqref{eq:IVP} or~\eqref{eq:RobinIVP} by
setting
$$E=\der{U}t,\quad H=-\der{U}x\quad \text{and} \quad \mu=1,$$
(where $c^2=1/\epsil\mu$ as above).
The Dirichlet boundary conditions become $\der U t(x_L,\cdot)=E_L$ and
$\der U t(x_R,\cdot)=E_R$.
The absorbing boundary conditions become
$\epsil^{1/2}\der U t+\mu^{1/2}\der U x = g_L$,
$\epsil^{1/2}\der U t-\mu^{1/2}\der U x = g_R$.
\end{rem}

In the following, we introduce the space--time Trefftz-DG method and develop its analysis in the case of the initial boundary value problem with Dirichlet boundary conditions~\eqref{eq:IVP}. 
We address the case of the problem with Robin boundary conditions~\eqref{eq:RobinIVP} in section~\ref{ss:Robin} and in Remarks~\ref{rem:RobinConvRates} and~\ref{rem:RobinAnalytic}.

%---------------------------------------------------------------------------------------
\section{Space--time DG discretisation}\label{s:Discretisation}

Let $K\subset Q$ be a Lipschitz subdomain such that $\epsil$ and
$\mu$ are constant in $K$; we denote by $\bn_K=(n_K^x,
n_K^t)$ the outward pointing unit normal vector on $\partial K$.
Assume $E,H\in H^1(K)$.
Multiplying the first and the second equation of
\eqref{eq:IVP} by the test functions $\vH ,\vE \in H^1(K)$ respectively, and integrating by parts we obtain
\begin{align*}
-\iint_K &\Big(E\der{\vH }x+\mu H\der{\vH }t
+H \der{\vE }{x}+\epsil E\der{\vE }t\Big)\di x\di t
\nonumber\\
&+\int_{\partial K} \Big(
(E\vH +H\vE )n^x_K+(\mu H\vH +\epsil E\vE )n^t_K\Big) \di s
=\iint_KJ \vH \di x\di t.
%\label{eq:IBP}
\end{align*}
(Here and in the following we omit writing the trace operator
$\mathrm
{Tr}: H^1(K)\to L^2(\partial K)$.)

\subsection{Mesh and DG notation}\label{ss:Mesh}
We introduce a mesh $\calT_h$ on $Q$, such that its elements are rectangles
with sides parallel to the space and time axes, and all the discontinuities of the parameters $\epsil$
and $\mu$ lie on interelement boundaries (note that the method described in this paper can be generalised to allow discontinuities lying inside the elements as in \cite{KSTW2014}).
The mesh may have hanging nodes.

We denote with $\Fh=\bigcup_{K\in\calT_h}\partial K$ the mesh
skeleton and its subsets:
\begin{align*}
\Fhor&:=\text{the union of the internal horizontal element sides ($t=$ constant)},
%\{(x,t)\in Q, \text{ s.t. } \exists K_1,K_2\in \calT_h,\; (x,t)\in\partial K_1\cap\partial K_2 ...\}
\\
\Fver&:=\text{the union of the internal vertical element sides ($x=$ constant)},
\\
\FO&:=%\Fh\cap\{t=0\}
[x_L,x_R]\times\{0\},\\
\FT&:=%\Fh\cap\{t=T\}
[x_L,x_R]\times\{T\},\\
\FL&:=%\Fh\cap\{x=x_L\}
\{x_L\}\times[0,T],\\
\FR&:=%\Fh\cap\{x=x_R\}=
\{x_R\}\times[0,T].
\end{align*}

We define the following broken Sobolev space:
$$H^1(\calT_h):=\big\{v\in L^2(Q),\; v|_K\in H^1(K)\; \forall K\in \calT_h\big\}$$
and the averages and jumps of functions $\phi\in H^1(\calT_h)$:
\begin{align*}
\mvl{\phi}&:=\frac12\big(\phi_{|_{K_1}}+\phi_{|_{K_2}}\big) 
&& \oon (\partial K_1\cap \partial K_2)\subset \Fver,\\
\jmp{\phi}_x&:= \phi_{|_{K_1}}n_{K_1}^x+\phi_{|_{K_2}} n_{K_2}^x
&& \oon (\partial K_1\cap \partial K_2)\subset \Fver,\\
\jmp{\phi}_t&:= \phi_{|_{K_1}}n_{K_1}^t+\phi_{|_{K_2}} n_{K_2}^t
&& \oon (\partial K_1\cap \partial K_2)\subset \Fhor.
\end{align*}
We note that $\jmp{\phi}_x$ is the trace of $\phi$ from
the left minus that from the right; $\jmp{\phi}_t$ is the trace from
lower times minus that from higher times.
On $\Fhor$, we denote by $\phi^-$ the trace of $\phi\in
H^1(\calT_h)$ taken from the adjacent element with lower time.

\subsection{DG formulation}\label{ss:DG}

We introduce a (vector) finite dimensional subspace $\bV_p(\calT_h)\subset H^1(\calT_h)^2$.
The space--time DG discretisation of the initial-boundary value problem
\eqref{eq:IVP} consists in finding $(\Ehp,\Hhp)\in \bV_p(\calT_h)$ such
that, for all $K\in \calT_h$ 
and for all $(\vE ,\vH )\in \bV_p(\calT_h)$, it holds
\begin{align}
-\iint_K &\Big(\Ehp\der{\vH }x+\mu \Hhp\der{\vH }t
+\Hhp \der{\vE }{x}+\epsil \Ehp\der{\vE }t\Big)\di x\di t
\nonumber\\
&+\int_{\partial K} \Big(
(\hEhp \vH +\hHhp \vE )n^x_K+(\epsil \hEhp \vE +\mu \hHhp \vH )n^t_K\Big) \di s
=\iint_KJ \vH \di x\di t.
\label{eq:DiscrPbl}
\end{align}
The numerical fluxes $\hEhp$ and $\hHhp$ are defined on the mesh
skeleton $\Fh$ as
\begin{align}\label{eq:Fluxes}
\hEhp
:=\begin{cases}
\Ehp^- & \oon \Fhor,\\
\Ehp &\oon\FT,\\
E_0 &\oon \FO,\\
\mvl{\Ehp}+\beta \jmp{\Hhp}_x &\oon \Fver,\\
E_L &\oon \FL,\\
E_R &\oon \FR,
\end{cases}
\qquad 
\hHhp
:=\begin{cases}
\Hhp^- & \oon \Fhor,\\
\Hhp &\oon \FT,\\
H_0 &\oon \FO,\\
\mvl{\Hhp}+\alpha \jmp{\Ehp}_x &\oon \Fver,\\
\Hhp-\alpha(\Ehp-E_L) &\oon \FL,\\
\Hhp+\alpha(\Ehp-E_R) &\oon \FR,
\end{cases}
\end{align}
where  $\alpha\in L^\infty(\Fver\cup\FL\cup\FR)$ and $\beta\in L^\infty(\Fver)$ are two
positive flux parameters.
The fluxes are chosen as upwind fluxes on horizontal mesh edges,
and centred fluxes with the addition of a penalty jump stabilisation on vertical
edges; other choices are possible.

Summing \eqref{eq:DiscrPbl} over all the elements $K\in\calT_h$, we
obtain the variational formulation
\begin{align}
&\text{seek}\;(\Ehp,\Hhp)\in \bV_p(\calT_h)\; \text{such that}\nonumber\\
&a_{DG}(\Ehp,\Hhp;\vE ,\vH )=\ell_{DG}(\vE ,\vH )\qquad \forall (\vE ,\vH )\in \bV_p(\calT_h),
\label{eq:VF}
\end{align}
where
\begin{align}\label{eq:Bilin}
&\begin{aligned}
a_{DG}(\Ehp,\Hhp;&\vE ,\vH ):=-\sum_{K\in\calT_h}\iint_K
%-\iint_Q 
\Big(\Ehp\der{\vH }x+\mu \Hhp\der{\vH }t
+\Hhp \der{\vE }{x}+\epsil \Ehp\der{\vE }t\Big)\di x\di t\\
&+\int_{\Fhor}(\epsil\Ehp^-\jmp{\vE }_t+\mu\Hhp^-\jmp{\vH }_t)\di x
+\int_\FT (\epsil\Ehp \vE +\mu\Hhp \vH )\di x\\
&+\int_\Fver \big( \mvl{\Ehp}\jmp{\vH }_x+\mvl{\Hhp}\jmp{\vE }_x
+\alpha\jmp{\Ehp}_x\jmp{\vE }_x+ \beta\jmp{\Hhp}_x\jmp{\vH }_x\big)\di t\\
&+\int_\FL \big(-\Hhp \vE +\alpha \Ehp \vE \big) \di t
+\int_\FR \big(\Hhp \vE +\alpha \Ehp \vE \big) \di t,
\end{aligned}
\\
&\begin{aligned}
\ell_{DG}(\vE ,\vH ):=&
\iint_Q J \vH \di x\di t
+\int_\FO (\epsil E_0\vE  +\mu H_0 \vH )\di x\\
&+\int_\FL E_L(\vH +\alpha \vE )\di t
+\int_\FR E_R(-\vH +\alpha \vE )\di t.
\end{aligned}\nonumber
\end{align}

\begin{rem}\label{rem:MonkRichter}
In \cite{MoRi05}, a space--time DG discretisation of linear
hyperbolic systems in $N$ dimensions is introduced. 
The formulation \eqref{eq:VF} with $\alpha=\beta=\frac12$ and $\bV_p(\calT_h)=\IP^p(\calT_h)^2$, the space of piecewise polynomials of degree at most $p$ on $\calT_h$,  is a particular case of that of \cite{MoRi05}.
More precisely, in the notation of \cite{MoRi05},
assume the initial boundary value problem to be posed in one space dimension ($N=1$),
and to have a linear hyperbolic system of dimension $m=2$ 
with time derivative coefficient matrix 
%$A=\begin{pmatrix}1 &0\\0&1\end{pmatrix}$, 
$A=(\begin{smallmatrix}1 &0\\0&1\end{smallmatrix})$,
and space derivative coefficient matrix
%$A_1=\begin{pmatrix}0&1\\1&0\end{pmatrix}$.
$A_1=(\begin{smallmatrix}0&1\\1&0\end{smallmatrix})$.
The solution will be the vector field $\bu=(E,H)$.
The matrix defining the Dirichlet boundary conditions takes
values
$\mathsf N|_{\FL}=(\begin{smallmatrix}2&-1 \\1&0\end{smallmatrix}) $ and
$\mathsf N|_{\FR}=(\begin{smallmatrix}2&1 \\-1&0\end{smallmatrix}) $, 
% $N|_{\FL}=\begin{pmatrix}2&-1 \\1&0\end{pmatrix} $ and
% $N|_{\FR}=\begin{pmatrix}2&1 \\-1&0\end{pmatrix} $, 
and the space--time mesh is taken as a Cartesian
mesh aligned with the space and time axes. The numerical fluxes are
defined through the following splitting of the matrix 
%$M=\begin{pmatrix}n^t_K&n^x_K \\n^x_K&n^t_K\end{pmatrix}$:
$\mathsf M=\left(\begin{smallmatrix}n^t_K&n^x_K \\n^x_K&n^t_K\end{smallmatrix}\right)$:
on the part of $\partial K$ with $n^x_K=1$,
% $M_+=\frac12\begin{pmatrix}1&1 \\1&1\end{pmatrix} $, 
% $M_-=\frac12\begin{pmatrix}-1&1 \\1&-1\end{pmatrix} $, 
$\mathsf M_+=\frac12(\begin{smallmatrix}1&1 \\1&1\end{smallmatrix})$, 
$\mathsf M_-=\frac12(\begin{smallmatrix}-1&1 \\1&-1\end{smallmatrix})$, 
while on the part of $\partial K$ with $n^x_K=-1$,
% $M_+=\frac12\begin{pmatrix}1&-1 \\-1&1\end{pmatrix} $, 
% $M_-=\frac12\begin{pmatrix}-1&-1 \\-1&-1\end{pmatrix} $.
$\mathsf M_+=\frac12(\begin{smallmatrix}1&-1 \\-1&1\end{smallmatrix})$, 
$\mathsf M_-=\frac12(\begin{smallmatrix}-1&-1
  \\-1&-1\end{smallmatrix})$. 
With these definitions, the formulation of \cite{MoRi05} coincides with
\eqref{eq:VF} with
$\bV_p(\calT_h)=\IP^p(\calT_h)^2$ and $\alpha=\beta=\frac12$.
We remark that here we are interested in using different
approximating spaces.
Since we consider only meshes aligned with the axis, the semi-explicit
time-stepping based on macro elements described in \cite[section~3]{MoRi05} is not applicable in our setting.
However, the flux-splitting technique described there might be used to
generalise the DG fluxes~\eqref{eq:Fluxes} to non-Cartesian meshes.
\end{rem}

\begin{rem}
Once the discrete space $\bV_p(\calT_h)$ has been chosen, the
variational problem \eqref{eq:VF} can be solved as a global
algebraic linear system.
Alternatively, one could partition the time interval $[0,T]$ into
subintervals $[t_{j-1},t_{j}]$, $j=1,\ldots, n$, and solve
sequentially the formulation in every time slab $\Omega\times
[t_{j-1},t_{j}]$ using as initial conditions the traces of the
solution in the previous slab.
In this second case, the space--time mesh needs to be aligned with the
time slabs.
The two approaches are algebraically equivalent, but the second one 
{is in general}
% might be
% \smsnote{is there a case when solving globally would be computationally cheaper?}
computationally more advantageous.
In both cases the method is implicit in time. 
\end{rem}

\subsection{Trefftz-DG formulation}\label{ss:TDG}

In the following we restrict the discussion to the homogeneous initial value problem,
i.e.\ $J=0$ in \eqref{eq:IVP}. 
%\ip{The three-dimensional case, as well as the case of the acoustic wave problem in any space dimension, is discussed Remark~\ref{rem:HigherD} below.}
%% dropped this sentence, otherwise is getting a paper about rem 3.4
We define the Trefftz space 
$$\bT(\calT_h):=\bigg\{(\vE ,\vH )\in H^1(\calT_h)^2,\quad
\der{\vE }x+\der{(\mu \vH )}t=\der{\vH }x+\der{(\epsil \vE )}t=0 \quad
\text{in all}\;  K\in \calT_h \bigg\}.$$
If we choose $\bV_p(\calT_h)\subset \bT(\calT_h)$, the volume term in the
bilinear form \eqref{eq:Bilin} vanishes and the formulation reduces to
\begin{align}
&\text{seek}\;(\Ehp,\Hhp)\in \bV_p(\calT_h)\; \text{such that}\nonumber\\
&a_{TDG}(\Ehp,\Hhp;\vE ,\vH )=\ell_{TDG}(\vE ,\vH )\qquad \forall (\vE ,\vH )\in \bV_p(\calT_h),
\label{eq:VFTDG}
\end{align}
where
\begin{align}\label{eq:BilinTDG}
&\begin{aligned}
a_{TDG}(\Ehp,\Hhp;&\vE ,\vH ):=
\int_{\Fhor}(\epsil\Ehp^-\jmp{\vE }_t+\mu\Hhp^-\jmp{\vH }_t)\di x
+\int_\FT (\epsil\Ehp \vE +\mu\Hhp \vH )\di x\\
&+\int_\Fver \big( \mvl{\Ehp}\jmp{\vH }_x+\mvl{\Hhp}\jmp{\vE }_x
+\alpha\jmp{\Ehp}_x\jmp{\vE }_x+ \beta\jmp{\Hhp}_x\jmp{\vH }_x\big)\di t\\
&+\int_\FL \big(-\Hhp \vE +\alpha \Ehp \vE \big) \di t
+\int_\FR \big(\Hhp \vE +\alpha \Ehp \vE \big) \di t,
\end{aligned}
\\
&\begin{aligned}
\ell_{TDG}(\vE ,\vH ):=&
\int_\FO (\epsil E_0\vE  +\mu H_0 \vH )\di x\\&
+\int_\FL E_L(\vH +\alpha \vE )\di t
+\int_\FR E_R(-\vH +\alpha \vE )\di t.
\end{aligned}\nonumber
\end{align}

\begin{rem}
With the choice of $\bV_p(\calT_h)$ as in section~\ref{s:ConvRates} below, and with the choice
$\alpha=\beta=0$, the formulation \eqref{eq:VFTDG} coincides with the
method introduced in \cite{KSTW2014}.
\end{rem}

%===================================================================================
\begin{rem}\label{rem:HigherD}
In three space dimensions, Maxwell's equations with zero source term read:
\begin{align*}
\begin{aligned}
&\nabla\times\bE+\der{(\mu \bH)}t = \bzero  ,\qquad %&&\iin Q,\\&
\nabla\times\bH-\der{(\epsil \bE)}t = \bzero &&\iin Q.
\end{aligned}
\end{align*}
Consider a space--time mesh whose elements are Cartesian products of Lipschitz polyhedra in space and time intervals.
In this case, the ``horizontal faces'' of the mesh skeleton are the polyhedra across which $t$ varies, the ``vertical faces'' are those across which $\bx$ varies.
% 
% \ip{For $\bJ=\mathbf{0}$, t}he corresponding Trefftz formulation on a single element $K$ can be written as:
% \[
% \int_{\partial K} \left(
% \Big((\bn_K^{\bx}\times\widehat\bE)\cdot\bv_H-(\bn_K^{\bx}\times\widehat\bH)\cdot\bv_E\Big)
% +(\mu\widehat\bH\cdot\bv_H +\epsil\widehat\bE\cdot\bv_E)\,n_K^t
% \right)\di s =0,
% \]
% \ip{with obvious notation for the normal components.}
%
We define the jumps 
$\jmp{\mathbf{v}}_t:=(\mathbf{v}^--\mathbf{v}^+)$  
on horizontal faces, 
$\jmp{\mathbf{v}}_{\bT}:=
\bn_{K_1}^{\bx}\times \mathbf{v}_{|_{K_1}}+\bn_{K_2}^{\bx}\times \mathbf{v}_{|_{K_2}}$
on vertical faces, and 
$\Fd:=\deO\times I$.
In the case of Dirichlet boundary conditions 
$\bn_\Omega^\bx\times\bE=\bn_\Omega^{\bx}\times\bg(\bx,t)$
on $\partial\Omega\times I$, with %using a definition of the 
numerical fluxes similar to those in~\eqref{eq:Fluxes} 
(in particular with $\hbEhp =\mvl{\bEhp} - \beta\jmp{\bHhp}_\bT$ and 
$\hbHhp =\mvl{\bHhp} + \alpha\jmp{\bEhp}_\bT$ on $\Fver$), 
and with the obvious
definition of the finite dimensional Trefftz space $\bV_p(\calT_h)$,
the Trefftz-DG formulation reads
\begin{align}
&\text{seek}\;(\bEhp,\bHhp)\in \bV_p(\calT_h)\; \text{such that}\nonumber\\
&a_{TDG}^{\rm 3D}(\bEhp,\bHhp; \bvE,\bvH)=\ell_{TDG}^{\rm 3D} (\bvE,\bvH)\qquad \forall (\bvE,\bvH )\in \bV_p(\calT_h),
\label{Maxwell3D}
\end{align}
with
\begin{align*}%\label{eq:BilinTDG}
&a_{TDG}^{\rm 3D} (\bEhp,\bHhp; \bvE,\bvH):=
\int_{\Fhor}(\epsil\bEhp^-\cdot\jmp{\bvE }_t+\mu\bHhp^-\cdot\jmp{\bvH }_t)\di \bx\\
&\quad+\int_\FT (\epsil\bEhp \cdot \bvE +\mu\bHhp \cdot \bvH )\di \bx\\
&\quad+\int_\Fver \big( -\mvl{\bEhp}\cdot\jmp{\bvH }_{\bT}+\mvl{\bHhp}\cdot\jmp{\bvE }_{\bT}
+\alpha\jmp{\bEhp}_{\bT} \cdot\jmp{\bvE }_{\bT}+ \beta\jmp{\bHhp}_{\bT} \cdot\jmp{\bvH }_{\bT}\big)\di  S\\
&\quad+\int_\Fd \big(\bHhp +\alpha(\bn_\Omega^{\bx}\times \bEhp) \big) \cdot (\bn_\Omega^{\bx}\times\bvE) \di  S,\\
&\ell_{TDG}^{\rm 3D} (\vE ,\vH ):=
\int_\FO (\epsil \mathbf{E}_0 \cdot\bvE  +\mu \mathbf{H}_0 \cdot\bvH )\di \bx
+\int_\Fd ( \bn_\Omega^{\bx}\times\bg)\cdot \big(-\bvH +\alpha (\bn_\Omega^{\bx}\times\bvE) \big)\di  S,
\end{align*}
where $(\mathbf{E}_0, \mathbf{H}_0)$ is the initial datum %.
% Here, the jumps are defined as follows: 
% $\jmp{\mathbf{v}}_t:=(\mathbf{v}^--\mathbf{v}^+)$  
% on horizontal faces, 
% $\jmp{\mathbf{v}}_{\bT}:=
% \bn_{K_1}^{\bx}\times \mathbf{v}_{|_{K_1}}+\bn_{K_2}^{\bx}\times \mathbf{v}_{|_{K_2}}$
% on vertical faces, and 
% $\Fd:=\deO\times I$ 
(see also \cite{EKSTWtransparent}).

For the homogeneous acoustic wave equation $-\Delta U+c^{-2} \frac{\partial^2 U}{\partial t^2}=0$ in any space dimension, setting $\bsigma=-\nabla U$ and $v=\der{U}t$, we have
\begin{align*}
\begin{aligned}
&\nabla v+\der{\bsigma}t = \bzero , \qquad %&&\iin Q,\\&
\nabla\cdot\bsigma+\frac{1}{c^2}\der{v}t =  0 &&\iin Q,
\end{aligned}
\end{align*}
% and the corresponding elemental Trefftz formulation, for $J=0$,
% is
% \[
% \int_{\partial K} \left(
% (\widehat{v}\,\btau + \widehat\bsigma\, w)\cdot\bn_K^{\bx}
% +\Big(\widehat\bsigma\cdot\btau + \frac{1}{c^2}\,\widehat v\,w\Big)\,n_K^t
% \right)\di s =0.
% \]
In the case of Dirichlet boundary conditions with datum $v=g(\bx,t)$
on $\partial\Omega\times I$,
again with numerical fluxes similar to those
in~\eqref{eq:Fluxes}, the Trefftz-DG formulation reads
\begin{align}
&\text{seek}\;(\Vhp,\Shp)\in \bV_p(\calT_h)\; \text{such that}\nonumber\\
&a_{TDG}^{\rm wave}(\Vhp,\Shp; w ,\btau  )=\ell_{TDG}^{\rm wave} (w ,\btau )\qquad \forall (w ,\btau )\in \bV_p(\calT_h),
\label{WavesND}
\end{align}
with
\begin{align*}%\label{eq:BilinTDG}
&a_{TDG}^{\rm wave} (\Vhp,\Shp; w ,\btau ):=
\int_{\Fhor}\big(c^{-2}\Vhp^-
\jmp{w}_t %(w^--w^+)
+\Shp^-\cdot\jmp{\btau}_t \big) %(\btau^--\btau^+)\big)
\di \bx
+\int_\FT (c^{-2}\Vhp  w +\Shp \cdot\btau )\di \bx\\
&\qquad\qquad+\int_\Fver \big( \mvl{\Vhp}\jmp{\btau }_{\bN}+\mvl{\Shp}\cdot\jmp{ w }_{\bN}
+\alpha\jmp{\Vhp}_{\bN}\cdot\jmp{ w }_{\bN}+ \beta\jmp{\Shp}_{\bN}\jmp{\btau }_{\bN}\big)\di S\\
&\qquad\qquad+\int_\Fd \big(\bsigma\cdot\bn_\Omega +\alpha \Vhp \big)   w \di S,
\\
&\ell_{TDG}^{\rm wave} ( w ,\btau ):=
\int_\FO ( c^{-2}v_0 w  +\bsigma_0\cdot \btau )\di \bx+\int_\Fd
g(\alpha  w -\btau\cdot\bn_\Omega)\di S,
\end{align*}
where $\bsigma_0=-\nabla U(\cdot,0)$ and $v_0=\frac{\partial
  U}{\partial t}(\cdot,0)$ are (given) initial data.
Here, the jumps are defined as follows: 
$\jmp{w}_t:=(w^--w^+)$ and 
$\jmp{\btau}_t:=(\btau^--\btau^+)$ on horizontal faces, 
$\jmp{w}_{\bN}:=w_{|_{K_1}}\bn_{K_1}^{\bx}+w_{|_{K_2}}
\bn_{K_2}^{\bx}$ and
$\jmp{\btau}_{\bN}:=\btau_{|_{K_1}}\cdot\bn_{K_1}^{\bx}+\btau_{|_{K_2}}
\cdot\bn_{K_2}^{\bx}$ on vertical faces.

The theoretical results proved in section~\ref{s:Analysis}
%\ipnote{`` , except for Lemma~\ref{lem:DualStability},'' commented.}
%, except for Lemma~\ref{lem:DualStability}, 
hold true also in these cases, {\em mutatis mutandis}, with very similar %identical
proofs.
%\ipnote{Last two sentences commented. Is ``identical proofs''
%appropriate also for the new Lemma~\ref{lem:DualStability}?}
% In particular, well-posedness of the Trefftz-DG discretisations \eqref{Maxwell3D} and \eqref{WavesND}, as well as their quasi-optimality in a suitable defined mesh-dependent norm, are valid.
% The extension of the stability result of Lemma~\ref{lem:DualStability} is thus the only missing point to control the $L^2$ norm of the Trefftz-DG error
% also in the multidimensional case. 
\end{rem}

%=============================================================================
\section{Analysis of the Trefftz-DG method}\label{s:Analysis}

In this section we prove the well-posedness of the Trefftz-DG method
and its quasi-optimality in a mesh-dependent
norm (Theorem \ref{th:QuasiOpt}), as well as error estimates in
$L^2(Q)$ (Corollary \ref{cor:L2}).
The analysis is carried out within the framework developed in \cite{PVersion} for time-harmonic wave
problems.

\subsection{Well-posedness and quasi-optimality}\label{ss:WellPosed}

We define two seminorms on $H^1(\calT_h)^2$:
\begin{align}\label{eq:DGNorms}
&\begin{aligned}
\Tnorm{(\vE ,\vH )}^2_{DG}:=\;&
\frac12 \N{\epsil^{1/2}\jmp{\vE }_t}_{L^2(\Fhor)}^2
+\frac12\N{\mu^{1/2}\jmp{\vH }_t}_{L^2(\Fhor)}^2  \\
&+\frac12\N{\epsil^{1/2}\vE }^2_{L^2(\FO\cup\FT)}  
+\frac12\N{\mu^{1/2}\vH }^2_{L^2(\FO\cup\FT)} \\
&+\N{\alpha^{1/2}\jmp{\vE }_x}_{L^2(\Fver)}^2
+\N{\beta^{1/2}\jmp{\vH }_x}_{L^2(\Fver)}^2
%\\&
+\N{\alpha^{1/2}\vE }_{L^2(\FL\cup\FR)}^2,
\end{aligned}
\\
&\begin{aligned}
\Tnorm{(\vE ,\vH )}^2_{DG^+}:=\;&
\Tnorm{(\vE ,\vH )}^2_{DG}
+\N{\epsil^{1/2}\vE^-}_{L^2(\Fhor)}^2
+\N{\mu^{1/2}\vH^-}_{L^2(\Fhor)}^2  \\
&+\N{\beta^{-1/2}\mvl{\vE }}_{L^2(\Fver)}^2
+\N{\alpha^{-1/2}\mvl{\vH }}_{L^2(\Fver)}^2\\&
+\N{\alpha^{-1/2}\vH }_{L^2(\FL\cup\FR)}^2.
\end{aligned}
\nonumber
\end{align}
The parameters $\epsil,\mu$ enter the DG and the DG${}^+$ norms only
through their traces on horizontal edges where they are continuous.

While $\Tnorm{\cdot}_{DG}$ is only a seminorm in $H^1(\calT_h)^2$, it defines a norm on $\bT(\calT_h)$.
\begin{lemma}\label{lem:DGisNorm}
The seminorm $\Tnorm{\cdot}_{DG}$ (and thus also $\Tnorm{\cdot}_{DG^+}$) is
a norm on the Trefftz space $\bT(\calT_h)$.
\end{lemma}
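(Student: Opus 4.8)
The plan is to prove the separation property: if $(\vE,\vH)\in\bT(\calT_h)$ and $\Tnorm{(\vE,\vH)}_{DG}=0$, then $(\vE,\vH)=(0,0)$. The assertion for $\Tnorm{\cdot}_{DG^+}$ then follows at once, since by \eqref{eq:DGNorms} one has $\Tnorm{\cdot}_{DG^+}^2=\Tnorm{\cdot}_{DG}^2+(\text{nonnegative terms})$, so $\Tnorm{\cdot}_{DG^+}\geq\Tnorm{\cdot}_{DG}$, and a seminorm bounding a norm from above is itself a norm.

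First I would unpack $\Tnorm{(\vE,\vH)}_{DG}=0$. Each of the nonnegative contributions in \eqref{eq:DGNorms} must vanish, and since $\epsil\geq\epsil_*>0$, $\mu\geq\mu_*>0$ and $\alpha,\beta>0$, this yields $\jmp{\vE}_t=\jmp{\vH}_t=0$ on $\Fhor$; $\vE=\vH=0$ on $\FO\cup\FT$; $\jmp{\vE}_x=\jmp{\vH}_x=0$ on $\Fver$; and $\vE=0$ on $\FL\cup\FR$. In particular every interelement jump of $\vE$ and of $\vH$ vanishes, so each of $\vE,\vH$ (piecewise $H^1$ with matching traces across the whole skeleton) lies in $H^1(Q)$; and because $(\vE,\vH)\in\bT(\calT_h)$ solves the Maxwell system in each element, with no jump contribution on interior faces, it satisfies $\der{\vE}x+\der{(\mu\vH)}t=0$ and $\der{\vH}x+\der{(\epsil\vE)}t=0$ in $Q$ in the distributional sense, with zero initial trace on $\FO$ and zero trace on the lateral boundary $\FL\cup\FR$.

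I would then close with the standard energy argument. Fix $\tau\in(0,T]$; testing the first equation with $\vH$ and the second with $\vE$ on $\Omega\times(0,\tau)$ and adding, the space-derivative terms combine into $\iint\der{(\vE\vH)}x$, whose boundary contribution $\int_0^\tau\big[(\vE\vH)(x_R,\cdot)-(\vE\vH)(x_L,\cdot)\big]\di t$ vanishes since $\vE=0$ on $\FL\cup\FR$, while the time-derivative terms (using that $\epsil,\mu$ depend only on $x$) integrate to $\tfrac12\int_\Omega(\epsil\vE^2+\mu\vH^2)(\cdot,\tau)\di x-\tfrac12\int_\Omega(\epsil\vE^2+\mu\vH^2)(\cdot,0)\di x$, the second term being $0$ because $\vE=\vH=0$ on $\FO$. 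Hence $\int_\Omega(\epsil\vE^2+\mu\vH^2)(\cdot,\tau)\di x=0$ for every $\tau$, so $\vE\equiv\vH\equiv0$. The only step needing a little care is the passage ``all jumps vanish'' $\Rightarrow$ ``global distributional solution of the homogeneous problem'', which can be argued via the global $H^1(Q)$-regularity as above, or avoided entirely by writing the energy identity elementwise and summing, the interior-face terms cancelling in pairs because the traces agree and the outward normals are opposite. Note that the smoothness of the data postulated in \eqref{eq:IVP} plays no role, as we are dealing with the fully homogeneous problem.
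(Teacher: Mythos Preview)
Your proof is correct and follows essentially the same route as the paper: unpack the vanishing of $\Tnorm{\cdot}_{DG}$ to conclude that $(\vE,\vH)\in H^1(Q)^2$ satisfies the homogeneous initial boundary value problem \eqref{eq:IVP}, then invoke uniqueness. The only difference is that the paper cites the uniqueness result from Evans, whereas you supply the energy argument explicitly; your version is thus slightly more self-contained, and your remark that the argument can alternatively be carried out elementwise and summed is a useful observation.
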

\begin{proof}
It suffices to verify that if $\Tnorm{(\vE ,\vH )}_{DG}=0$ then $\vE =\vH =0$.
If  $\Tnorm{(\vE ,\vH )}_{DG}=0$, then the pair $(\vE ,\vH )\in H^1(\Omega)^2$
and satisfies the initial value problem \eqref{eq:IVP} with $E_0=H_0=0$,
$E_L=E_R=0$ and $J=0$ (this last identity follows from $(\vE ,\vH )\in\bT(\calT_h)$).
Since problem~\eqref{eq:IVP} admits a unique solution {\cite[section~7.2.2c]{EVA02}}, then $\vE =\vH =0$.
\end{proof}

\begin{proposition}[Coercivity]\label{prop:Coercivity}
For all $(\vE ,\vH )\in H^1(\calT_h)^2$ the following identity holds
true:
\begin{align}\label{eq:Coercivity}
a_{DG}(\vE ,\vH ;\vE ,\vH )=\Tnorm{(\vE ,\vH )}^2_{DG}.
\end{align}
In particular, the bilinear form $a_{TDG}(\cdot\,;\cdot)$ is
coercive  in the space $\bT(\calT_h)$  with respect to the DG norm, with coercivity constant equal to 1.
\end{proposition}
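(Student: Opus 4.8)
The plan is to verify the algebraic identity \eqref{eq:Coercivity} by a direct, facet-by-facet computation: set $\Ehp=\vE$, $\Hhp=\vH$ in \eqref{eq:Bilin}, integrate the volume term by parts on each element, and then reassemble the resulting element-boundary integrals over the mesh skeleton, checking that every facet type contributes exactly the matching term of $\Tnorm{\cdot}^2_{DG}$.

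First I would rewrite the volume integrand as a space--time divergence. Since $\epsil$ and $\mu$ are constant on each $K\in\calT_h$, one has on $K$
\begin{equation*}
\vE\der{\vH}x+\vH\der{\vE}{x}+\mu\vH\der{\vH}t+\epsil\vE\der{\vE}t
=\der{}{x}\big(\vE\vH\big)+\der{}{t}\Big(\tfrac{\mu}{2}\vH^2+\tfrac{\epsil}{2}\vE^2\Big),
\end{equation*}
so by the divergence theorem the volume term of $a_{DG}(\vE,\vH;\vE,\vH)$ equals
\begin{equation*}
-\sum_{K\in\calT_h}\int_{\partial K}\Big(\vE\vH\,n_K^x+\big(\tfrac{\mu}{2}\vH^2+\tfrac{\epsil}{2}\vE^2\big)n_K^t\Big)\di s .
\end{equation*}

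Next I would regroup this sum over the skeleton. On an interior horizontal facet shared by the element with lower time (trace $\phi^-$, $n^t=+1$) and the one with higher time ($\phi^+$, $n^t=-1$), the divergence contribution is $-\int(\tfrac{\epsil}{2}((\vE^-)^2-(\vE^+)^2)+\tfrac{\mu}{2}((\vH^-)^2-(\vH^+)^2))$; adding the $\Fhor$-term $\int_{\Fhor}(\epsil\vE^-\jmp{\vE}_t+\mu\vH^-\jmp{\vH}_t)$ of \eqref{eq:Bilin} and using $\jmp{\phi}_t=\phi^--\phi^+$, a completion of the square turns the $\vE$-part into $\tfrac{\epsil}{2}\jmp{\vE}_t^2$ (and likewise for $\vH$), giving the first line of $\Tnorm{\cdot}^2_{DG}$. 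On $\FT$ the divergence contribution $-\int_\FT(\tfrac{\epsil}{2}\vE^2+\tfrac{\mu}{2}\vH^2)$ combined with the $\FT$-term $\int_\FT(\epsil\vE^2+\mu\vH^2)$, and on $\FO$ (where $n^t=-1$ and $a_{DG}$ has no facet term) the standalone $+\int_\FO(\tfrac{\epsil}{2}\vE^2+\tfrac{\mu}{2}\vH^2)$, yield the second line of the norm. On an interior vertical facet I would use the elementary identity $a_1b_1-a_2b_2=\mvl{a}\jmp{b}+\mvl{b}\jmp{a}$ (with $\mvl{a}=\tfrac12(a_1+a_2)$, $\jmp{a}=a_1-a_2$) to rewrite the contribution $-\int(\vE|_{K_1}\vH|_{K_1}-\vE|_{K_2}\vH|_{K_2})$ as $-\int_\Fver(\mvl{\vE}\jmp{\vH}_x+\mvl{\vH}\jmp{\vE}_x)$, which cancels the two mixed terms in the $\Fver$-line of \eqref{eq:Bilin} and leaves $\int_\Fver(\alpha\jmp{\vE}_x^2+\beta\jmp{\vH}_x^2)$. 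Finally, on $\FL$ ($n^x=-1$) and $\FR$ ($n^x=+1$) the divergence contributions $\pm\int\vE\vH$ cancel the corresponding $\mp\int\vH\vE$ terms of \eqref{eq:Bilin}, leaving $\int_{\FL\cup\FR}\alpha\vE^2$. Collecting all contributions gives exactly $\Tnorm{(\vE,\vH)}^2_{DG}$, proving \eqref{eq:Coercivity}. The ``in particular'' part is then immediate: for $(\vE,\vH)\in\bT(\calT_h)$ the volume integrand above vanishes identically, so $a_{TDG}(\vE,\vH;\vE,\vH)=a_{DG}(\vE,\vH;\vE,\vH)=\Tnorm{(\vE,\vH)}^2_{DG}$, which by Lemma~\ref{lem:DGisNorm} is the square of a norm on $\bT(\calT_h)$.

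I expect the only genuine difficulty to be the bookkeeping rather than any deep idea: one must keep consistent conventions for the two outward normals and for the $\pm$ traces on each shared facet, and apply the completion of the square on horizontal facets and the $a_1b_1-a_2b_2$ identity on vertical facets with the correct orientation so that nothing is double-counted. Hanging nodes cause no extra trouble, since each shared facet still enters the skeleton sum exactly once.
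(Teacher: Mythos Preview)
Your proposal is correct and follows essentially the same route as the paper's proof: rewrite the volume integrand as the space--time divergence of $(\vE\vH,\tfrac{\epsil}{2}\vE^2+\tfrac{\mu}{2}\vH^2)$, integrate by parts elementwise, and match facet contributions using the identities $v^-\jmp{v}_t-\tfrac12\jmp{v^2}_t=\tfrac12\jmp{v}_t^2$ on $\Fhor$ and $\jmp{\vE\vH}_x=\mvl{\vE}\jmp{\vH}_x+\mvl{\vH}\jmp{\vE}_x$ on $\Fver$. Your facet-by-facet bookkeeping and the ``in particular'' justification via Lemma~\ref{lem:DGisNorm} are exactly what the paper does.
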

\begin{proof}
Using the elementwise integration by parts in time and space
\begin{align}
\begin{aligned}
\sum_{K\in\calT_h}\iint_K
\der Ft\di x\di t &= 
\int_\Fhor \jmp{F}_t \di x
+\int_\FT F \di x
-\int_\FO F \di x\\
\sum_{K\in\calT_h}\iint_K
\der Fx\di x\di t &=
\int_\Fver \jmp{F}_x \di t
+\int_\FR F \di t
-\int_\FL F \di t
\qquad \forall F\in W^{1,1}(\calT_h),
\end{aligned}
\label{eq:basicIBP}
\end{align}
and the jump identity
\begin{align}\label{eq:JumpSquared}
v^-\jmp{v}_t-\frac12 \jmp{v^2}_t = \frac12 \jmp{v}^2_t
\qquad \oon \Fhor, \quad \forall v\in H^1(\calT_h),
\end{align}
we obtain the identity in the assertion:
\begin{align*}
a_{DG}(\vE ,\vH ;\vE ,\vH )\overset{\eqref{eq:Bilin}}=&
-\sum_{K\in\calT_h}\iint_K
\bigg(\frac12\der{}t\Big(\epsil\vE^2+\mu\vH^2\Big)+\der{}x(\vE \vH )
\bigg)\di x \di t\\
&+\int_{\Fhor}(\epsil\vE^-\jmp{\vE }_t+\mu\vH^-\jmp{\vH }_t)\di x
+\int_\FT (\epsil\vE^2 +\mu\vH^2 )\di x\\
&+\int_\Fver \big( \mvl{\vE}\jmp{\vH }_x+\mvl{\vH}\jmp{\vE }_x
+\alpha\jmp{\vE }_x^2+ \beta\jmp{\vH }_x^2\big)\di t\\
&+\int_\FL \big(-\vH \vE +\alpha \vE^2 \big) \di t
+\int_\FR \big(\vH\vE +\alpha \vE^2 \big) \di t
\\
\overset{\eqref{eq:basicIBP}}=&
\int_{\Fhor}\bigg(\epsil\vE^-\jmp{\vE }_t+\mu\vH^-\jmp{\vH }_t
-\frac12\epsil\jmp{\vE^2}_t-\frac12\mu\jmp{\vH^2}_t\bigg)\di x\\
&+\frac12\int_\FO(\epsil\vE^2+\mu\vH^2)\di x
+\frac12\int_\FT (\epsil\vE^2 +\mu\vH^2 )\di x\\
&+\int_\Fver\hspace{-1.8mm} \Big( \underbrace{\mvl{\vE}\jmp{\vH }_x+\mvl{\vH}\jmp{\vE }_x
-\jmp{\vE\vH}_x}_{=0}
+\alpha\jmp{\vE }_x^2+ \beta\jmp{\vH }_x^2 \Big)\di t\\
&+\int_\FL \alpha \vE^2 \di t
+\int_\FR \alpha \vE^2  \di t\\
\overset{\eqref{eq:DGNorms},\eqref{eq:JumpSquared}}=&\Tnorm{(\vE,\vH)}_{DG}^2.
\end{align*}
The coercivity of $a_{TDG}(\cdot,\cdot)=a_{DG}(\cdot,\cdot)$ in $\bT(\calT_h)$ follows from Lemma \ref{lem:DGisNorm}.
\end{proof}

\begin{proposition}[Continuity]\label{prop:Continuity}
The following continuity bound holds:
%\footnote{continuity constant of bilinear form is 2, as opposed to $\sqrt2$, because of the terms on $\FT$}
\begin{align*}
\abs{a_{TDG}(E,H;\vE,\vH)}
\le  2%\sqrt2
\:\Tnorm{(E,H)}_{DG^+} \Tnorm{(\vE,\vH)}_{DG} 
\qquad \forall (E,H),(\vE,\vH)\in \bT(\calT_h).
\end{align*}
Moreover, when $E_L=E_R=0$, it holds that
\[
\abs{\ell_{TDG}(v_E,v_H)}\le \sqrt{2}\,\Big(
\N{\epsil^{1/2}E_0}^2_{L^2(\FO)}+\N{\mu^{1/2}H_0}^2_{L^2(\FO)}\Big)^{1/2}\Tnorm{(v_E,v_H)}_{DG}.
\]
% otherwise it holds
% \[\begin{split}
% \abs{\ell_{TDG}(v_E,v_H)}\le {2}\,\Big(&
% \N{\epsil^{1/2}E_0}_{L^2(\FO)}+\N{\mu^{1/2}H_0}_{L^2(\FO)}\\ &
% +\N{\alpha^{1/2}E_L}_{L^2(\FL)}+\N{\alpha^{1/2}E_R}_{L^2(\FR)}
% \Big)\cdot\Tnorm{(v_E,v_H)}_{DG^+}.
%\end{split}\]
\end{proposition}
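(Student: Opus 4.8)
The plan is to derive both bounds by applying the Cauchy--Schwarz inequality to $a_{TDG}$ and $\ell_{TDG}$ face by face, matching each factor against one summand of the $DG$ or $DG^+$ (semi)norm in \eqref{eq:DGNorms} through the weights $\epsil^{1/2},\mu^{1/2},\alpha^{1/2},\beta^{1/2}$ (and their reciprocals on the first-argument side). Reading off the face integrals of \eqref{eq:BilinTDG}, the first step would be to regroup each integrand so that the factor carrying the test pair $(\vE,\vH)$ multiplies a \emph{single} summand of $\Tnorm{(\vE,\vH)}^2_{DG}$: on $\Fhor$ write $\epsil E^-\jmp{\vE}_t=(\epsil^{1/2}E^-)(\epsil^{1/2}\jmp{\vE}_t)$ and similarly for $H$; on $\FT$ write $\epsil E\vE=(\epsil^{1/2}E)(\epsil^{1/2}\vE)$, to be controlled by the $\FO\cup\FT$ norms; on $\Fver$, combine the two terms containing $\jmp{\vE}_x$ into $(\alpha^{-1/2}\mvl{H}+\alpha^{1/2}\jmp{E}_x)(\alpha^{1/2}\jmp{\vE}_x)$ and the two containing $\jmp{\vH}_x$ into the analogue with $\beta$; on $\FL\cup\FR$, combine the terms containing $\vE$ into a single product with $\alpha^{1/2}\vE$. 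The point of this regrouping is that, after Cauchy--Schwarz on each face, every summand of $\Tnorm{(\vE,\vH)}^2_{DG}$ is used at most once, so the ``test'' squares sum to at most $2\Tnorm{(\vE,\vH)}^2_{DG}$; the lone factor $2$ here comes only from the $\frac12$'s in front of the horizontal-jump and $\FO\cup\FT$ terms of $\Tnorm{\cdot}^2_{DG}$ and from estimating the $\FT$ norm by the $\FO\cup\FT$ norm.

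A second application of Cauchy--Schwarz, to the finite sum $\sum_j s_j t_j$ so obtained, would complete the argument. On the first-argument side, the triangle inequality turns the grouped factors into sums of $DG^+$ summands, e.g.\ $\N{\alpha^{-1/2}\mvl{H}+\alpha^{1/2}\jmp{E}_x}^2_{L^2(\Fver)}\le 2\N{\alpha^{-1/2}\mvl{H}}^2_{L^2(\Fver)}+2\N{\alpha^{1/2}\jmp{E}_x}^2_{L^2(\Fver)}$, and analogously on $\FL\cup\FR$; together with $\N{\epsil^{1/2}E}^2_{L^2(\FT)}\le 2\cdot\frac12\N{\epsil^{1/2}E}^2_{L^2(\FO\cup\FT)}$ and the untouched $\N{\epsil^{1/2}E^-}^2_{L^2(\Fhor)},\N{\mu^{1/2}H^-}^2_{L^2(\Fhor)}$, each $s_j^2$ is at most twice a sum of \emph{distinct} summands of $\Tnorm{(E,H)}^2_{DG^+}$, hence $\sum_j s_j^2\le 2\Tnorm{(E,H)}^2_{DG^+}$. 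Multiplying the two square roots then yields $\abs{a_{TDG}(E,H;\vE,\vH)}\le(2\Tnorm{(E,H)}^2_{DG^+})^{1/2}(2\Tnorm{(\vE,\vH)}^2_{DG})^{1/2}=2\,\Tnorm{(E,H)}_{DG^+}\Tnorm{(\vE,\vH)}_{DG}$.

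For $\ell_{TDG}$, setting $E_L=E_R=0$ leaves only $\int_\FO(\epsil E_0\vE+\mu H_0\vH)\di x$, which Cauchy--Schwarz (first on the integrand, then on $\IR^2$) bounds by $\big(\N{\epsil^{1/2}E_0}^2_{L^2(\FO)}+\N{\mu^{1/2}H_0}^2_{L^2(\FO)}\big)^{1/2}\big(\N{\epsil^{1/2}\vE}^2_{L^2(\FO)}+\N{\mu^{1/2}\vH}^2_{L^2(\FO)}\big)^{1/2}$; since $\N{\epsil^{1/2}\vE}^2_{L^2(\FO)}+\N{\mu^{1/2}\vH}^2_{L^2(\FO)}\le 2\big(\frac12\N{\epsil^{1/2}\vE}^2_{L^2(\FO\cup\FT)}+\frac12\N{\mu^{1/2}\vH}^2_{L^2(\FO\cup\FT)}\big)\le 2\Tnorm{(\vE,\vH)}^2_{DG}$, the $\sqrt2$ of the statement follows. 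I expect the only real difficulty to be the bookkeeping of the first step: the vertical- and lateral-face terms must be regrouped by their test-function factor \emph{before} being estimated, so that neither Cauchy--Schwarz step double-counts a norm summand and the accumulated constant stays exactly $2$ (respectively $\sqrt2$) rather than larger.
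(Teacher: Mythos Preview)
Your proposal is correct and is exactly the argument the paper has in mind: the paper's own proof is the one-line remark that the bounds ``follow from the definition of the bilinear form and of the linear functional in~\eqref{eq:BilinTDG}, the norms in~\eqref{eq:DGNorms}, and the Cauchy--Schwarz inequality.'' Your write-up simply makes explicit the face-by-face weighting and the bookkeeping that keeps the constants at $2$ and $\sqrt2$, which the paper leaves to the reader.
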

\begin{proof}
The assertions follow from the definition of the bilinear form and
of the linear functional in \eqref{eq:BilinTDG}, the norms in \eqref{eq:DGNorms}, and the
Cauchy--Schwarz inequality.
\end{proof}

\begin{theorem}[Quasi-optimality]\label{th:QuasiOpt}
For any finite dimensional $\bV_p(\calT_h)\subset \bT(\calT_h)$,
the Trefftz-DG formulation~\eqref{eq:VFTDG} admits
a unique  solution $(\Ehp,\Hhp)\in\bV_p(\calT_h)$.
Moreover, the following quasi-optimality bound holds:
\begin{align}\label{eq:QuasiOpt}
\Tnorm{(E,H)-(\Ehp,\Hhp)}_{DG}
\le  3 
\inf_{(\vE,\vH)\in\bV_p\Th}
\Tnorm{(E,H)-(\vE,\vH)}_{DG^+}.
\end{align}
\end{theorem}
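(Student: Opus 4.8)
The plan is to prove this by the standard Céa-type argument for a coercive, continuous and consistent Galerkin scheme, using Proposition~\ref{prop:Coercivity}, Proposition~\ref{prop:Continuity} and a Galerkin-orthogonality property. \textbf{Well-posedness.} Since $\bV_p(\calT_h)$ is finite dimensional and $\bV_p(\calT_h)\subset\bT(\calT_h)$, the square linear system \eqref{eq:VFTDG} is uniquely solvable as soon as its homogeneous version has only the trivial solution. But if $(\Ehp,\Hhp)\in\bV_p(\calT_h)$ solves \eqref{eq:VFTDG} with zero right-hand side, then by Proposition~\ref{prop:Coercivity} $\Tnorm{(\Ehp,\Hhp)}_{DG}^2=a_{TDG}(\Ehp,\Hhp;\Ehp,\Hhp)=0$, hence $(\Ehp,\Hhp)=(0,0)$ by Lemma~\ref{lem:DGisNorm}; so \eqref{eq:VFTDG} has a unique solution.

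\textbf{Consistency.} The (sufficiently regular) exact solution $(E,H)$ of \eqref{eq:IVP} with $J=0$ belongs to $\bT(\calT_h)$, since it solves the equations in every element; moreover all its interelement jumps vanish and its traces on $\FO$, $\FL$, $\FR$ equal $E_0,H_0,E_L,E_R$. Substituting $(E,H)$ into the skeleton form \eqref{eq:BilinTDG}, every average and penalty term collapses to the corresponding exact trace, and an elementwise integration by parts combined with the equations and the Trefftz property of the test functions (used as in \eqref{eq:basicIBP} and exactly as in section~\ref{ss:TDG} to drop the volume term) yields $a_{TDG}(E,H;\vE,\vH)=\ell_{TDG}(\vE,\vH)$ for all $(\vE,\vH)\in\bT(\calT_h)$. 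Subtracting \eqref{eq:VFTDG} gives the Galerkin orthogonality
\[
a_{TDG}\big((E,H)-(\Ehp,\Hhp);\vE,\vH\big)=0\qquad\forall(\vE,\vH)\in\bV_p(\calT_h).
\]

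\textbf{Céa argument.} Fix $(\vE,\vH)\in\bV_p(\calT_h)$ and set $(\xi_E,\xi_H):=(\Ehp,\Hhp)-(\vE,\vH)\in\bV_p(\calT_h)\subset\bT(\calT_h)$. Using in turn coercivity (Proposition~\ref{prop:Coercivity}), Galerkin orthogonality, and continuity (Proposition~\ref{prop:Continuity}, whose hypotheses are met since both $(E,H)-(\vE,\vH)$ and $(\xi_E,\xi_H)$ lie in $\bT(\calT_h)$), we obtain
\[
\Tnorm{(\xi_E,\xi_H)}_{DG}^2=a_{TDG}(\xi_E,\xi_H;\xi_E,\xi_H)=a_{TDG}\big((E,H)-(\vE,\vH);\xi_E,\xi_H\big)\le 2\,\Tnorm{(E,H)-(\vE,\vH)}_{DG^+}\Tnorm{(\xi_E,\xi_H)}_{DG},
\]
so that $\Tnorm{(\xi_E,\xi_H)}_{DG}\le 2\,\Tnorm{(E,H)-(\vE,\vH)}_{DG^+}$. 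By the triangle inequality and the trivial bound $\Tnorm{\cdot}_{DG}\le\Tnorm{\cdot}_{DG^+}$ (the $DG^+$ norm is the $DG$ norm plus nonnegative terms),
\[
\Tnorm{(E,H)-(\Ehp,\Hhp)}_{DG}\le\Tnorm{(E,H)-(\vE,\vH)}_{DG}+\Tnorm{(\xi_E,\xi_H)}_{DG}\le 3\,\Tnorm{(E,H)-(\vE,\vH)}_{DG^+},
\]
and taking the infimum over $(\vE,\vH)\in\bV_p(\calT_h)$ yields \eqref{eq:QuasiOpt}.

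The argument is otherwise routine; the one point needing care, and the place where a little work is hidden, is consistency: one has to check that inserting the exact solution into the skeleton-based form \eqref{eq:BilinTDG} makes every jump and penalty contribution reduce to the true traces so that $a_{TDG}(E,H;\cdot)=\ell_{TDG}(\cdot)$ holds \emph{exactly} (this uses the equations and the piecewise constancy of $\epsil,\mu$). One also has to keep in mind that the continuity estimate of Proposition~\ref{prop:Continuity} is available only on $\bT(\calT_h)$, which is why it is essential that the two factors entering it lie in the Trefftz space, and that enough regularity of $(E,H)$ is assumed for $\Tnorm{(E,H)-(\vE,\vH)}_{DG^+}$ to be finite.
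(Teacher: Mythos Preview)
Your proof is correct and follows essentially the same approach as the paper's: well-posedness via coercivity on the finite-dimensional Trefftz subspace, consistency/Galerkin orthogonality, then the standard C\'ea argument combining Proposition~\ref{prop:Coercivity}, Proposition~\ref{prop:Continuity}, the triangle inequality, and $\Tnorm{\cdot}_{DG}\le\Tnorm{\cdot}_{DG^+}$. The paper is slightly terser on consistency (it simply invokes the consistency of the numerical fluxes), but the logic is the same.
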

\begin{proof}
To prove uniqueness, assume that $E_L=E_R=E_0=H_0=0$. 
Proposition~\ref{prop:Coercivity} implies $\Ehp=\Hhp=0$. 
Existence follows from uniqueness.
For~\eqref{eq:QuasiOpt}, the triangle inequality gives
\begin{equation}\label{eq:tria}
\Tnorm{(E,H)-(\Ehp,\Hhp)}_{DG}\le \Tnorm{(E,H)-(v_E,v_H)}_{DG}+\Tnorm{(\Ehp,\Hhp)-(v_E,v_H)}_{DG}
\end{equation}
for all $(v_E,v_H)\in \bV_p(\calT_h)$. Since $(\Ehp,\Hhp)-(v_E,v_H)\in
\bV_p(\calT_h)\subset \bT(\calT_h)$,
Proposition~\ref{prop:Coercivity},
consistency (which follows by construction and from the consistency of the
numerical fluxes), and Proposition~\ref{prop:Continuity} give
\[
\begin{split}
\Tnorm{(\Ehp,\Hhp)-(v_E,v_H)}_{DG}^2=&
\,a_{TDG}(E-\vE,H-\vH; \Ehp-\vE,\Hhp-\vH\big)\\
\le&\,  2 \,\Tnorm{(E,H)-(v_E,v_H)}_{DG^+}\Tnorm{(\Ehp,\Hhp)-(v_E,v_H)}_{DG},
\end{split}
\]
which, together with~\eqref{eq:tria}, implies~\eqref{eq:QuasiOpt}.
\end{proof}

\begin{rem}\label{rem:ContinuousDep}
If $E_L=E_R=0$, i.e.\ the lateral boundary conditions are homogeneous, then the right-hand
side functional $\ell_{TDG}(\cdot)$ is continuous in DG norm, see  Proposition~\ref{prop:Continuity}.
This, together with Proposition~\ref{prop:Coercivity}, immediately gives
a stability bound on the discrete solutions in terms of the data, i.e.\ 
$$\Tnorm{(\Ehp,\Hhp)}_{DG}\le 
\sqrt2\,\Big(\N{\epsil^{1/2}E_0}^2_{L^2(\FO)}+\N{\mu^{1/2}H_0}^2_{L^2(\FO)}\Big)^{1/2}.$$
Otherwise, we only have stability in terms of the exact solution from \eqref{eq:QuasiOpt}:
$$\Tnorm{(\Ehp,\Hhp)}_{DG}\le  4 
\Tnorm{(E,H)}_{DG^+}.$$
The reason why a stability bound in terms of $E_0,H_0,E_L,E_R$ does not hold if $E_L,E_R\ne 0$ is that, in this case, the integrals on $\FL$ and $\FR$ in the definition of $\ell_{TDG}(\cdot)$ (see~\eqref{eq:BilinTDG}) do not vanish and bounding them by the Cauchy--Schwarz inequality generates
terms with $\vH$ on $\FL\cup\FR$,
% AM: maybe better to use \Hhp here
which only allow to bound $\abs{\ell_{TDG}(v_E,v_H)}$ from above
with $\Tnorm{(v_E,v_H)}_{DG^+}$, instead of the weaker norm $\Tnorm{(v_E,v_H)}_{DG}$:
\[
\begin{split}
\abs{\ell_{TDG}(v_E,v_H)}\le \sqrt{2}\,\Big(&
\N{\epsil^{1/2}E_0}_{L^2(\FO)}^2+\N{\mu^{1/2}H_0}^2_{L^2(\FO)}\\
&+\N{\alpha^{1/2}E_L}^2_{L^2(\FL)}+\N{\alpha^{1/2}E_R}^2_{L^2(\FR)}
\Big)^{1/2}\cdot\Tnorm{(v_E,v_H)}_{DG^+}.
\end{split}
\]
\end{rem}
%\footnote{If we can prove (by duality?) that the L2 norm is bounded by the  DG norm for Trefftz functions, then we have stability for non-homogeneous BCs using H1 lifting of EL/ER + Trefftz on H/E + IBP + Cauchy--Schwarz.\\ AM: I don't understand any more this note...}

\begin{rem}\label{rem:WeightedAvg}
Let us fix $0\le \xi\le 1$.
In the definition \eqref{eq:Fluxes} of the numerical fluxes on $\Fver$ 
we may substitute to the averages $\mvl{\Ehp}$ and $\mvl{\Hhp}$ the weighted averages 
$\mvl{\Ehp}_{\xi}$ and $\mvl{\Hhp}_{1-\xi}$ respectively, where
we have set $\mvl{\phi}_\xi:=\xi\phi_{|K_1}+(1-\xi)\phi_{|K_2}$ on $(\partial K_1\cap\partial K_2)\subset \Fver$.
All the results obtained in this and the following sections remain valid
in this case.
\end{rem}

%==================================================================================================
\subsection{Estimates in \texorpdfstring{$L^2(Q)$}{L2(Q)}~norm}\label{ss:L2}

By virtue of Theorem \ref{th:QuasiOpt}, we can control the Trefftz-DG
error in DG~norm; it is of course desirable to prove a bound on the
error measured in a mesh-independent norm.
Following the argument developed for time-harmonic problems in \cite[Theorem~3.1]{MOW99} (see also \cite[Theorem~4.1]{BUM07}, %\cite[Lemma~4.4]{HMP12}
\cite[Lemma~3.7]{PVersion}), in Proposition~\ref{prop:Duality} we
prove that the $L^2(Q)$~norm of any Trefftz function is bounded by its
DG~norm, thus the error estimate in $L^2(Q)$~norm readily follows, see Corollary~\ref{cor:L2}.

The application of the technique of \cite[Theorem~3.1]{MOW99} relies on a certain
stability estimate for the following auxiliary problem:
\begin{align}\label{eq:dualIVP}
\begin{aligned}
&\der{\vE}x+\der{(\mu \vH)}t = \phi &&\iin Q,\\
&\der{\vH}x+\der{(\epsil \vE)}t = \psi &&\iin Q,\\
&\vE(\cdot,0)=0, \quad
\vH(\cdot,0)=0 &&\oon \Omega,\\
&\vE(x_L,\cdot)=0,\quad
\vE(x_R,\cdot)=0 &&\oon I,
\end{aligned}
\end{align}
for $\phi,\psi\in L^2(Q)$. 
More precisely, we will need a bound on  the $L^2$~norm of the traces of $\vE$ and $\vH$ on horizontal and vertical segments in terms of the $L^2(Q)$~norm of $(\phi,\psi)$:
\begin{align}\nonumber
&\N{\epsil^{1/2}\vE}^{2}_{L^2(\Fhor\cup{\FT})}
+\N{\mu^{1/2}\vH}^{2}_{L^2(\Fhor\cup{\FT})}\\&	
+\N{{\beta^{-1/2}}\vE}^{2}_{L^2(\Fver)}
+\N{{\alpha^{-1/2}}\vH}^{2}_{L^2(\Fver\cup\FL\cup\FR)}\nonumber\\
&\qquad\le \Cstab^{2}
\bigg(\N{\epsil^{1/2}\phi}^{2}_{L^2(Q)}+\N{\mu^{1/2}\psi}^{2}_{L^2(Q)}\bigg)
\hspace{15mm} \forall (\phi,\psi)\in L^2(Q)^2,
\label{eq:DualStability}
\end{align}
for some $\Cstab>0$. We have inserted the numerical flux parameters within
the
third and fourth term on the left-hand side of~\eqref{eq:DualStability} because this is what
we need in the proof of Proposition~\ref{prop:Duality} below; then the
constant $\Cstab$ will also depend on $\alpha$ and
$\beta$. %\ipnote{Next paragraph commented.}

% In Lemma~\ref{lem:DualStability} we prove estimate
% \eqref{eq:DualStability} under the assumption that $\epsil$ and $\mu$ are constant throughout
% the domain.
% We make this assumption because the proof of
% Lemma~\ref{lem:DualStability} relies on the decomposition of the
% solution of problem \eqref{eq:dualIVP} in left- and right-propagating
% components; we expect however that the bound \eqref{eq:DualStability}
% may be established in the general setting.
% The proof of the bound on the $L^2(Q)$ norm of the Trefftz functions in Proposition~\ref{prop:Duality} and Corollary~\ref{cor:L2} does not require $\epsil$ and $\mu$ to be constant.

\begin{prop} \label{prop:Duality}
Assume that the estimate \eqref{eq:DualStability} holds true for
$(\vE,\vH)$ solution of problem~\eqref{eq:dualIVP}.
Then, for any Trefftz function $(\wE,\wH)\in \bT\Th$, the $L^2(Q)$~norm is bounded by the DG norm:
\begin{align*}
\Big(\N{\mu^{-1/2}\wE}_{L^2(Q)}^2+\N{\epsil^{-1/2}\wH}_{L^2(Q)}^2\Big)^{1/2}
\le  \sqrt2\, \Cstab\,\Tnorm{(\wE,\wH)}_{DG},
\end{align*}
with $\Cstab$ as in \eqref{eq:DualStability}.
\end{prop}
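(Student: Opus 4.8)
The plan is to use a duality (Aubin--Nitsche type) argument: we test the auxiliary problem~\eqref{eq:dualIVP} with a clever choice of right-hand side built from the Trefftz function itself, and then integrate by parts to relate the $L^2(Q)$~norm of $(\wE,\wH)$ to boundary/skeleton terms that are controlled by $\Tnorm{(\wE,\wH)}_{DG}$ via~\eqref{eq:DualStability} and the Cauchy--Schwarz inequality. Concretely, given $(\wE,\wH)\in\bT(\calT_h)$, I would choose $\phi:=\mu^{-1}\wE$ and $\psi:=\epsil^{-1}\wH$ (so that the natural $L^2$-pairings produce exactly the quantity $\N{\mu^{-1/2}\wE}_{L^2(Q)}^2+\N{\epsil^{-1/2}\wH}_{L^2(Q)}^2$ we want to bound), and let $(\vE,\vH)$ be the corresponding solution of~\eqref{eq:dualIVP}.

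The key computation is then: multiply the first equation of~\eqref{eq:dualIVP} by $\wE$, the second by $\wH$, sum, and integrate over $Q$; the left-hand side becomes $\iint_Q\big(\wE(\partial_x\vE+\partial_t(\mu\vH))+\wH(\partial_x\vH+\partial_t(\epsil\vE))\big)\di x\di t$. Now I would integrate by parts element-by-element using~\eqref{eq:basicIBP}, moving the derivatives onto $(\wE,\wH)$; since $(\wE,\wH)$ is a Trefftz function, the resulting volume terms (the ones involving derivatives of $\wE,\wH$) vanish identically, leaving only skeleton integrals over $\Fhor$, $\FT$, $\FO$, $\Fver$, $\FL$, $\FR$. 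Using the homogeneous data of the auxiliary problem ($\vE=\vH=0$ on $\FO$ and $\vE=0$ on $\FL\cup\FR$), several of these boundary terms drop out. What remains, after rearranging jumps and averages (as in the proof of Proposition~\ref{prop:Coercivity}), is a sum of products in which one factor is a trace of $(\vE,\vH)$ and the other is a jump or trace of $(\wE,\wH)$; applying Cauchy--Schwarz factor-wise, the $(\wE,\wH)$-factors assemble into (a constant times) $\Tnorm{(\wE,\wH)}_{DG}$ and the $(\vE,\vH)$-factors assemble into exactly the left-hand side of~\eqref{eq:DualStability}. Invoking~\eqref{eq:DualStability} with our choice of $(\phi,\psi)$ bounds that factor by $\Cstab\big(\N{\epsil^{1/2}\phi}_{L^2(Q)}^2+\N{\mu^{1/2}\psi}_{L^2(Q)}^2\big)^{1/2}=\Cstab\big(\N{\epsil^{-1/2}\wH}_{L^2(Q)}^2+\N{\mu^{-1/2}\wE}_{L^2(Q)}^2\big)^{1/2}$. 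Hence $\N{\mu^{-1/2}\wE}_{L^2(Q)}^2+\N{\epsil^{-1/2}\wH}_{L^2(Q)}^2\le (\text{const})\,\Cstab\,\Tnorm{(\wE,\wH)}_{DG}\big(\N{\mu^{-1/2}\wE}_{L^2(Q)}^2+\N{\epsil^{-1/2}\wH}_{L^2(Q)}^2\big)^{1/2}$, and dividing through yields the stated bound; tracking the constant should give exactly the factor $\sqrt2$.

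The main obstacle is the bookkeeping in the integration-by-parts step: one must pair the skeleton contributions of $(\vE,\vH)$ against those of $(\wE,\wH)$ so that the $(\vE,\vH)$-side reproduces precisely the (weighted) norm appearing on the left of~\eqref{eq:DualStability} --- this is exactly why the flux parameters $\alpha,\beta$ were inserted there with those specific powers ($\beta^{-1/2}$ on $\vE$, $\alpha^{-1/2}$ on $\vH$ over $\Fver$), matching the $\alpha^{1/2},\beta^{1/2}$ weights on the jumps of $\wE,\wH$ in the DG norm so that Cauchy--Schwarz closes cleanly. A secondary subtlety is handling the horizontal-face terms: the jump identity~\eqref{eq:JumpSquared} and the upwind structure mean the cross terms there reorganise into $\jmp{\wE}_t,\jmp{\wH}_t$ paired against traces of $(\vE,\vH)$, which are controlled by the first two terms of the DG norm and the corresponding terms in~\eqref{eq:DualStability}. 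Everything else is routine Cauchy--Schwarz applied term by term, so the proof is short modulo this careful matching of factors.
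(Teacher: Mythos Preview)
Your overall strategy is exactly the paper's: test the auxiliary problem against the Trefftz function, integrate by parts elementwise, use the Trefftz property to kill the volume terms, and close with Cauchy--Schwarz plus~\eqref{eq:DualStability}. However, the pairing you chose is the wrong way round, and with it the volume terms do \emph{not} vanish. If you multiply the first equation of~\eqref{eq:dualIVP} by $\wE$ and the second by $\wH$ and integrate by parts in each $K$, the volume contribution is
\[
-\iint_K\Big(\vE\big(\partial_x\wE+\epsil\,\partial_t\wH\big)+\vH\big(\partial_x\wH+\mu\,\partial_t\wE\big)\Big)\di x\di t,
\]
whereas the Trefftz identities are $\partial_x\wE+\mu\,\partial_t\wH=0$ and $\partial_x\wH+\epsil\,\partial_t\wE=0$. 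Unless $\epsil=\mu$, these do not match and nothing cancels. The same swap also breaks your weight computation at the end: with $\phi=\mu^{-1}\wE$ one gets $\N{\epsil^{1/2}\phi}_{L^2(Q)}=\N{\epsil^{1/2}\mu^{-1}\wE}_{L^2(Q)}$, which is not $\N{\mu^{-1/2}\wE}_{L^2(Q)}$.

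The fix is simply to swap the multipliers: multiply the first equation by $\wH$ and the second by $\wE$ (equivalently, take $\phi:=\epsil^{-1}\wH$ and $\psi:=\mu^{-1}\wE$). Then the volume term becomes $-\iint_K\big(\vE(\partial_x\wH+\epsil\,\partial_t\wE)+\vH(\partial_x\wE+\mu\,\partial_t\wH)\big)=0$ by the Trefftz property, and the weights line up: $\N{\epsil^{1/2}\phi}_{L^2(Q)}=\N{\epsil^{-1/2}\wH}_{L^2(Q)}$ and $\N{\mu^{1/2}\psi}_{L^2(Q)}=\N{\mu^{-1/2}\wE}_{L^2(Q)}$. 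After this correction your argument goes through and is essentially identical to the paper's (the paper phrases the last step via a supremum over all $(\phi,\psi)$, while you pick the optimiser directly; the content is the same, and the constant $\sqrt2$ comes from the horizontal-face terms carrying weight $\tfrac12$ in the DG norm).
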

\begin{proof}
Let $(\vE,\vH)$ be the solution of the auxiliary problem \eqref{eq:dualIVP}. 
The space--time vector field $(\vE,\mu\vH)$ belongs to
$H(\dive_{x,t};Q)$, thus it has vanishing normal jumps across any smooth curve lying in the interior of $Q$; in particular $\jmp{\vE}_x=\jmp{\mu\vH}_t=0$.
Similarly, $(\vH,\epsil\vE)\in H(\dive_{x,t};Q)$ implies $\jmp{\vH}_x=\jmp{\epsil\vE}_t=0$.
Multiplying the functions to be bounded with the source terms of
problem  \eqref{eq:dualIVP} and integrating by parts over the
elements, we have
\begin{align*}
&\hspace{-10mm}\iint_Q(\wE\psi+\wH\phi)\di x\di t\\
=&\sum_{K\in\calT_h}\iint_K \bigg(\wE\der{\vH}x+\wE\der{(\epsil \vE)}t
+\wH\der{\vE}x+\wH\der{(\mu \vH)}t\bigg)\di x\di t\\
=&-\sum_{K\in\calT_h}\Bigg(\iint_K \bigg(\underbrace{\der{(\epsil \wE)}t\vE+\der{\wH}x\vE}_{=0}
 +\underbrace{\der{\wE}x\vH+\der{(\mu \wH)}t\vH}_{=0}\bigg)\di x\di t\\
&+\int_{\partial K}(\epsil\wE\vE n_K^t+\mu\wH\vH n_K^t+\wE\vH n_K^x+\wH\vE n_K^x)\di s\Bigg)\\
=&
\int_\Fhor\underbrace{\jmp{\epsil\wE\vE+\mu\wH\vH}_t}_{=\epsil\jmp{\wE}_t\vE
+\mu\jmp{\wH}_t\vH}\di x\\
&%+\int_\FT(\epsil\wE{ip{\underbrace\vE_{=0}}}  +\mu\wH{ip{\underbrace\vH_{=0}}}) \di x
 +\int_\FT(\epsil\wE\vE  +\mu\wH\vH) \di x
 -\int_\FO(\epsil\wE{\underbrace{\vE}_{=0}} +\mu\wH{\underbrace{\vH}_{=0}}) \di x\\
&+\int_\Fver \underbrace{\jmp{\wE\vH+\wH\vE}_x}_{=\jmp{\wE}_x\vH+\jmp{\wH}\vE}\di t\\
&-\int_\FL (\wE\vH+\wH\underbrace\vE_{=0})\di t
+\int_\FR (\wE\vH+\wH\underbrace\vE_{=0})\di t\\
\le&\Tnorm{(\wE,\wH)}_{DG}\\
&\cdot\bigg(2\int_{\Fhor\cup{\FT}}(\epsil\vE^2+\mu\vH^2)\di x
%+\int_{\Fver\cup\FL\cup\FR}(\vE^2+\vH^2)\di t\bigg)^{1/2}\\
+\int_{\Fver}(\beta^{-1}\vE^2+\alpha^{-1}\vH^2) \di t
+\int_{\FL\cup\FR}\alpha^{-1}\vH^2 \di t\bigg)^{1/2}\\
\overset{\eqref{eq:DualStability}}\le& \sqrt2\,\Cstab\Tnorm{(\wE,\wH)}_{DG}  \bigg(\N{{\epsil^{1/2}}\phi}^{2}_{L^2(Q)}+\N{{\mu^{1/2}}\psi}^{2}_{L^2(Q)}\bigg)^{1/2}.
\end{align*}
%Taking the maximum over $\phi,\psi\in L^2(Q)$ we obtain the desired estimate.
Since
\[
\Big(\N{\mu^{-1/2}\wE}_{L^2(Q)}^2+\N{\epsil^{-1/2}\wH}_{L^2(Q)}^2\Big)^{1/2}=\!
\sup_{(\phi,\psi)\in L^2(Q)^2}\frac{\iint_Q(\wE\psi+\wH\phi)\di x\di t}{\big(\N{{\epsil^{1/2}}\phi}^{2}_{L^2(Q)}+\N{{\mu^{1/2}}\psi}^{2}_{L^2(Q)}\big)^{1/2}},
\]
we obtain the desired estimate.
\end{proof}

Recalling that the error $((E-\Ehp),(H-\Hhp))\in\bT\Th$, and combining Proposition~\ref{prop:Duality} and the quasi-optimality in
DG~norm proved in Theorem~\ref{th:QuasiOpt}, we obtain the following
bound on the Trefftz-DG error measured in $L^2(Q)$~norm.

\begin{cor}[Quasi-optimality in $L^2(Q)$]\label{cor:L2}
Under the assumptions of Proposition \ref{prop:Duality}, for any finite dimensional Trefftz space $\bV_p(\calT_h)\subset\bT\Th$, the solution
$(\Ehp,\Hhp)$ of the Trefftz-DG formulation~\eqref{eq:VFTDG} satisfies
the bound
\begin{align*}
\Big(\N{\mu^{-1/2}(E-\Ehp)}_{L^2(Q)}^2
&+\N{\epsil^{-1/2}(H-\Hhp)}_{L^2(Q)}^2\Big)^{1/2}\\
&\le 3\sqrt2\, \Cstab
\inf_{(\vE,\vH)\in\bV_p\Th}\Tnorm{(E,H)-(\vE,\vH)}_{DG^+},
\end{align*}
with $\Cstab$ as in \eqref{eq:DualStability}.
\end{cor}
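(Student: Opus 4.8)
The plan is to obtain the bound by composing two facts already at our disposal: the duality estimate of Proposition~\ref{prop:Duality}, which bounds the $L^2(Q)$~norm of any Trefftz function by its DG~norm, and the quasi-optimality estimate~\eqref{eq:QuasiOpt} of Theorem~\ref{th:QuasiOpt}, which bounds the DG~norm of the discretisation error by the best approximation error in the $DG^+$~norm. The only genuinely new point is to verify that both results apply to the relevant function, namely the error $(E-\Ehp,H-\Hhp)$.

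First I would note that, since $(E,H)$ solves~\eqref{eq:IVP} with $J=0$ on all of $Q$, it is in particular an elementwise solution of the Maxwell system, hence $(E,H)\in\bT\Th$; by hypothesis $(\Ehp,\Hhp)\in\bV_p\Th\subset\bT\Th$, so, $\bT\Th$ being a linear space, the error $(\wE,\wH):=(E-\Ehp,H-\Hhp)$ also lies in $\bT\Th$. This is exactly what makes the duality argument of Proposition~\ref{prop:Duality} applicable here.

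Next I would apply Proposition~\ref{prop:Duality} to $(\wE,\wH)$ --- legitimate because the stability assumption~\eqref{eq:DualStability} is in force by hypothesis --- to obtain
\[
\Big(\N{\mu^{-1/2}(E-\Ehp)}_{L^2(Q)}^2+\N{\epsil^{-1/2}(H-\Hhp)}_{L^2(Q)}^2\Big)^{1/2}
\le\sqrt2\,\Cstab\,\Tnorm{(E,H)-(\Ehp,\Hhp)}_{DG},
\]
and then invoke~\eqref{eq:QuasiOpt}, valid for any finite-dimensional $\bV_p\Th\subset\bT\Th$, to bound $\Tnorm{(E,H)-(\Ehp,\Hhp)}_{DG}$ by $3\inf_{(\vE,\vH)\in\bV_p\Th}\Tnorm{(E,H)-(\vE,\vH)}_{DG^+}$. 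Chaining the two inequalities produces exactly the constant $3\sqrt2\,\Cstab$, which completes the argument.

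I do not expect a real obstacle: the corollary is a direct composition of two previously established results. The two places that deserve a line of justification are the membership $(E-\Ehp,H-\Hhp)\in\bT\Th$ (which uses both the exactness of $(E,H)$ and the Trefftz inclusion $\bV_p\Th\subset\bT\Th$) and the observation that the hypotheses of Proposition~\ref{prop:Duality} and Theorem~\ref{th:QuasiOpt} hold simultaneously under the standing assumptions. The only slightly delicate point is a tacit regularity requirement: for the right-hand side to be finite one needs $(E,H)\in H^1(\calT_h)^2$, which follows from the assumed regularity of the data.
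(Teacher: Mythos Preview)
Your proposal is correct and follows exactly the paper's approach: the paper simply notes that the error $(E-\Ehp,H-\Hhp)\in\bT\Th$ and then combines Proposition~\ref{prop:Duality} with the quasi-optimality bound~\eqref{eq:QuasiOpt} of Theorem~\ref{th:QuasiOpt}. Your additional remarks on why the error lies in the Trefftz space and on the regularity needed for finiteness of the right-hand side are sound elaborations of points the paper leaves implicit.
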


We prove now the stability bound \eqref{eq:DualStability} for the
  solution of the initial auxiliary problem \eqref{eq:dualIVP}, with
  additional assumptions on the flux parameters $\alpha$ and $\beta$.
The proof is based on differentiation of an energy functional
%an energy representation 
and Gronwall's lemma.
%\amnote{dropped sentence `` and can be generalised to higher space dimensions.''\\ what is ``energy representation''?}
%, and can be generalised to higher space dimensions. 
In case of constant material parameters $\epsil$ and $\mu$, one can
derive the bound
\eqref{eq:DualStability}, based on an exact representation
of the solution of~\eqref{eq:dualIVP},
with a better constant $\Cstab$ than that of
Lemma~\ref{lem:DualStability}, with no additional assumptions on $\alpha$ and $\beta$, see appendix \ref{appendix}; the generalisation of this argument to higher space
dimensions and general geometries is not straightforward.

We introduce some notation. For an element $K\in\calT_h$, we denote by
$h_K^x$ its horizontal edge length and set $h^x:=\max_{K\in\calT_h}h_K^x$. 
For a face $f\subset\Fver$, $f=\partial K_1\cup\partial K_2$, we define
\[
h_f^x:=\min\{h_{K_1}^x,h_{K_2}^x\}, \qquad
\epsil_f:=\max\{\epsil_{K_1},\epsil_{K_2}\},\qquad
\mu_f:=\max\{\mu_{K_1},\mu_{K_2}\},
\]
while for a face $f\subset\FL\cup\FR$, $f\subset \partial K$,
\[
h_f:=h_K^x,\qquad 
\epsil_f:=\epsil_K,\qquad
\mu_f:=\mu_K.
\]

\begin{lemma}\label{lem:DualStability}
Assume that the flux parameters $\alpha$ and $\beta$ have the
following expressions on any face $f\subset\Fver\cup\FL\cup\FR$:
%\amnote{here $\beta$ and $\mu_f$ don't need to be defined on $\FL/\FR$, but correcting would make everything more complicated for nothing}
\begin{equation}\label{eq:alphabeta}
\alpha_{|_f}={\tt a}\,\frac{h^x}{h_f^x}\,\epsil_f, \qquad
\beta_{|_f}={\tt b}\,\frac{h^x}{h_f^x}\,\mu_f,
\end{equation}
where ${\tt a}$ and ${\tt b}$ are positive constants independent of the
mesh size, the material coefficients, and the local approximating spaces.

The solution $(\vE,\vH)$ of the initial auxiliary problem \eqref{eq:dualIVP} satisfies the stability bound 
\eqref{eq:DualStability} with 
\begin{equation}\label{eq:Cdual}
\Cstab^2\le (N_{\mathrm{hor}}\, e^T c^2_\infty) +
\min\{{\tt a}^{-1},{\tt b}^{-1}\}
\bigg(\frac{4T^2}{h^x} c_\infty^4 + \frac{h^x}2 c_\infty^2+ 2c_\infty^3 N_{\mathrm{hor}} e^T\bigg),
\end{equation}
where we have set
$N_{\mathrm{hor}}:=\#\big\{t, \text{ such that } (x,t)\in\Fhor\cup\FT 
\text{ for some }x_L<x<x_R\big\}$ and $c_\infty:=\N{c}_{L^\infty(Q)}$.
% and $A:=\min\{{\tt a}^{-1},{\tt b}^{-1}\}$.

% \begin{align*}
% N_{\mathrm{hor}}:=&\;\#\big\{t, \text{ such that } (x,t)\in\Fhor\cup\FT 
% \text{ for some }x_L<x<x_R\big\},\\
% c_\infty:=&\N{c}_{L^\infty(Q)},\\
% A:=&\min\{{\tt a}^{-1},{\tt b}^{-1}\}.
% N_{\mathrm{ver}}:=&\;\#\big\{x, \text{ such that } (x,t)\in\Fver\cup\FL\cup\FR
% \text{ for some }0<t<T\big\},\\
% \eta:=&\lceil cT/(x_R-x_L)\rceil,\\
% \gamma:=&\max\Big\{\N{(\beta\epsil)^{-1}+(\alpha\mu)^{-1}}_{L^\infty(\Fver)};\,
% \N{(\alpha\mu)^{-1}}_{L^\infty(\FL\cup\FR)}\Big\}.
%\end{align*}
\end{lemma}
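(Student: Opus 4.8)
The plan is to establish \eqref{eq:DualStability} by two energy arguments: one in the time variable $t$, controlling the traces on the horizontal faces $\Fhor\cup\FT$, and one in the space variable $x$, controlling the traces on the vertical faces $\Fver\cup\FL\cup\FR$. Throughout I would first reduce, by density, to the case of $\phi,\psi$ smooth, so that $\vE,\vH$ are classical solutions, smooth in each region where $\epsil,\mu$ are constant and continuous across the vertical interelement interfaces (because $(\vE,\mu\vH),(\vH,\epsil\vE)\in H(\dive_{x,t};Q)$); since the final constant $\Cstab$ depends only on the data norm, the estimate, together with the existence of the $L^2$ traces, passes to the general case in the limit.

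For the time-energy estimate I would introduce $\calE(t):=\frac12\int_\Omega\big(\epsil\,\vE(x,t)^2+\mu\,\vH(x,t)^2\big)\di x$. Differentiating in $t$, using the two equations of \eqref{eq:dualIVP} to replace $\partial_t(\epsil\vE)$ and $\partial_t(\mu\vH)$, and using $\int_\Omega\partial_x(\vE\vH)\di x=[\vE\vH]_{x_L}^{x_R}=0$ (the homogeneous Dirichlet condition on $\vE$), one gets $\calE'(t)=\int_\Omega(\vE\psi+\vH\phi)\di x$. Estimating the right-hand side by Young's inequality as $\le\calE(t)+\frac12\int_\Omega(\epsil^{-1}\psi^2+\mu^{-1}\phi^2)\di x$ and using $\epsil^{-1}=c^2\mu\le c_\infty^2\mu$, $\mu^{-1}=c^2\epsil\le c_\infty^2\epsil$ gives $\calE'(t)\le\calE(t)+\tfrac{c_\infty^2}{2}\big(\N{\epsil^{1/2}\phi(\cdot,t)}^2_{L^2(\Omega)}+\N{\mu^{1/2}\psi(\cdot,t)}^2_{L^2(\Omega)}\big)$, so, since $\calE(0)=0$, Gronwall's lemma yields $2\calE(t)\le c_\infty^2 e^T\big(\N{\epsil^{1/2}\phi}^2_{L^2(Q)}+\N{\mu^{1/2}\psi}^2_{L^2(Q)}\big)$ for all $t\in[0,T]$. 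Summing this over the $N_{\mathrm{hor}}$ time levels carrying a horizontal face controls the first two terms on the left-hand side of \eqref{eq:DualStability} by $N_{\mathrm{hor}}e^T c_\infty^2$; integrating the estimate once more in $t$ also gives $\iint_Q(\epsil\vE^2+\mu\vH^2)\di x\di t\le c_\infty^2 T^2\big(\N{\epsil^{1/2}\phi}^2_{L^2(Q)}+\N{\mu^{1/2}\psi}^2_{L^2(Q)}\big)$, which I will use below.

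For the vertical faces I would work horizontal strip by horizontal strip: let $\tau_0<\tau_1<\cdots$ be the times carrying a horizontal face (so every vertical face $f$ lies in one strip $\Omega\times(\tau_{j-1},\tau_j)$), and set $\calH_j(x):=\frac12\int_{\tau_{j-1}}^{\tau_j}\big(\epsil\,\vE(x,t)^2+\mu\,\vH(x,t)^2\big)\di t$ in each region where $\epsil,\mu$ are constant. Multiplying the first equation of \eqref{eq:dualIVP} by $\epsil\vE$, the second by $\mu\vH$, adding, and integrating in $t$ over $(\tau_{j-1},\tau_j)$, the cross-terms combine into $\epsil\mu\,\partial_t(\vE\vH)$ and integrate to a single boundary contribution, giving $\calH_j'(x)=\int_{\tau_{j-1}}^{\tau_j}(\epsil\vE\phi+\mu\vH\psi)\di t-\epsil\mu\,[\vE\vH]_{t=\tau_{j-1}}^{t=\tau_j}$. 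For a vertical face $f$ on the line $x=x_f$ in strip $j$, I would choose an adjacent element of horizontal width $\ge h_f^x$, and average the identity $\calH_j(x_f)=\calH_j(x)-\int_{x_f}^{x}\calH_j'$ over $x$ in a sub-rectangle $R_f=J_f\times(\tau_{j-1},\tau_j)$, with $J_f$ of length $h_f^x$ inside that element, to obtain
\[
\int_f(\epsil\,\vE^2+\mu\,\vH^2)\di t\le 2\calH_j(x_f)\le\frac1{h_f^x}\iint_{R_f}(\epsil\,\vE^2+\mu\,\vH^2)+2\iint_{R_f}(\epsil\abs{\vE\phi}+\mu\abs{\vH\psi})+2\int_{J_f}\epsil\mu\big(\abs{\vE\vH}(x,\tau_j)+\abs{\vE\vH}(x,\tau_{j-1})\big)\di x
\]
(on $\FL$, $\FR$ only the $\vH$-terms survive, since $\vE$ vanishes there). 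I would then multiply by $h_f^x$, use $\mu_f^{-1}\le c_\infty^2\epsil_f$ and $\epsil_f^{-1}\le c_\infty^2\mu_f$ to convert the weights $\alpha^{-1},\beta^{-1}$ from \eqref{eq:alphabeta} into the energy weights — this is exactly where the hypothesis $\alpha_{|_f}={\tt a}(h^x/h_f^x)\epsil_f$, $\beta_{|_f}={\tt b}(h^x/h_f^x)\mu_f$ is used, as it turns the $h_f^x$ produced by the averaging into the $h^x$ carried by $\alpha,\beta$ — and sum over $f$. The rectangles $R_f$ have bounded overlap, so the first sum is controlled by $\iint_Q(\epsil\vE^2+\mu\vH^2)$, the second, after Cauchy--Schwarz and Young, by $\iint_Q(\epsil\vE^2+\mu\vH^2)$ together with $(h^x)^2\big(\N{\epsil^{1/2}\phi}^2_{L^2(Q)}+\N{\mu^{1/2}\psi}^2_{L^2(Q)}\big)$, and the third, grouped by time level $\tau_\ell$, by $\sum_\ell\calE(\tau_\ell)\le N_{\mathrm{hor}}\sup_t\calE(t)$. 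Inserting the time-energy bounds on $\sup_t\calE$ and on $\iint_Q$ then gives \eqref{eq:DualStability} with $\Cstab$ of the form \eqref{eq:Cdual}.

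The delicate point is the second argument. Unlike for the horizontal faces, the traces of $\vE,\vH$ on the vertical faces are not reached by a direct energy identity: the naive idea of writing $\vE(x_f,t)=\int_{x_L}^{x_f}\partial_x\vE\di x$ and bounding $\partial_x\vE=\phi-\mu\partial_t\vH$ is circular, because $\partial_t\vH$ (equivalently $\partial_x\vE$) is not known to lie in $L^2(Q)$ when $\phi,\psi$ are merely $L^2$. The way out is to differentiate in $x$ the \emph{time-integrated} energy $\calH_j$ rather than $\vE$ or $\vH$ themselves — which only requires $\epsil\vE\phi+\mu\vH\psi$ and $\epsil\mu\vE\vH$ to be integrable on time slices — and to absorb the mesh-width factor generated by the trace/averaging step into the built-in scaling \eqref{eq:alphabeta} of the flux parameters. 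Keeping track of the piecewise-constant coefficients across the vertical interfaces (where $\epsil\vE^2+\mu\vH^2$ jumps although $\vE,\vH$ do not) and of possible hanging nodes accounts for the bookkeeping behind the explicit constant \eqref{eq:Cdual}.
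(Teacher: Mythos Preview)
Your argument for the horizontal faces via the time-energy $\calE(t)$ and Gronwall is exactly what the paper does. The $L^2(Q)$ bound you state with constant $c_\infty^2T^2$ is also the paper's, but note that it does not follow from integrating the Gronwall estimate (that would give $c_\infty^2Te^T$); you need instead to integrate the \emph{identity} $\calE(t)=\iint_{\Omega\times(0,t)}(\vE\psi+\vH\phi)$ in $t$ and use Cauchy--Schwarz self-referentially, as in the paper's \eqref{eq:L2Q}.

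For the vertical faces your route is correct but genuinely different from the paper's. You work face by face: write $\calH_j(x_f)=\calH_j(x)-\int_{x_f}^{x}\calH_j'$, average over a sub-rectangle $R_f$ of width $h_f^x$ inside an adjacent element, and sum over faces controlling overlaps. The paper instead works element by element, using the Rellich-type identity
\[
\int_{\partial K^{\mathrm{WE}}}v^2\di t=\frac{2}{h_K^x}\iint_K\frac{\partial}{\partial x}\Big((x-x_K)v^2\Big)\di x\di t
=\frac{2}{h_K^x}\iint_K\Big(v^2+2(x-x_K)v\,\partial_x v\Big)\di x\di t,
\]
applied to $v=\mu^{-1/2}\vE$ and $v=\epsil^{-1/2}\vH$, then replacing $\partial_x\vE,\partial_x\vH$ via the PDE exactly as you do; this produces the volume term, the source term, and the horizontal boundary term in one step, and the sum over $K$ is automatic with no overlap bookkeeping. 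Both approaches exploit the same mechanism (integrating $v\partial_x v$ over an element and using the equations to avoid bare $x$-derivatives), and both use the hypothesis \eqref{eq:alphabeta} in the same way, to convert the $h_f^x$ generated by the trace step into the global $h^x$ carried by $\alpha,\beta$. Your version is perfectly valid and perhaps more transparent about why only one power of the local meshwidth is needed; the paper's version is slicker and yields precisely the numerical constants in \eqref{eq:Cdual}, whereas yours will produce a constant of the same form but with factors depending on the overlap multiplicity of the $R_f$.
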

\begin{proof}
We assume that $\phi$ and $\psi$ are continuous and compactly supported in $Q$; the general case will follow by a density argument.

Let $(\vE,\vH)$ be the solution of problem
\eqref{eq:dualIVP}, and set 
\begin{equation}\label{EnergyDef}
\calE(t):=\frac12 \int_{\Omega\times\{t\}}(\epsil\vE^2+\mu\vH^2)\di
x. 
\end{equation}
The initial conditions give
$\calE(0)=0$, while the equations and the boundary conditions imply
\[
\begin{split}
\der{}t \calE(t)
&=\int_{\Omega\times\{t\}}\Big(\vE \der{(\epsil\vE)} t+\vH \der{(\mu\vH)} t\Big)\di x
=\int_{\Omega\times\{t\}}\Big(-\der{}x(\vE \vH)+ \vE\psi +\vH \phi\Big)\di x\\
&=-\underbrace{\vE(x_R,t)}_{=0}\vH(x_R,t) + \underbrace{\vE(x_L,t)}_{=0}\vH(x_L,t)
+\int_{\Omega\times\{t\}}\Big(\vE\psi +\vH \phi\Big)\di x,
%\int_{\Omega\times\{t\}}\Big(\vE\psi +\vH \phi\Big)\di x,
\end{split}
\]
which in turns implies
\begin{equation}\label{eq:EnergyId}
\calE(t)=\underbrace{\calE(0)}_{=0}+\iint_{\Omega\times(0,t)}\Big(\vE\psi +\vH \phi\Big)\di x\di s,
\end{equation}
and
\begin{align}
\der{}t \calE(t)
&\le\frac12
\int_{\Omega\times\{t\}}\Big( \epsil\vE^2+\epsil^{-1}\psi^2 +\mu\vH^2+
\mu^{-1}\phi^2\Big)\di x\ 
\nonumber\\
&=\calE(t)+\frac12\N{\epsil^{-1/2}\psi}^2_{L^2(\Omega\times\{t\})}
+\frac12\N{\mu^{-1/2}\phi}^2_{L^2(\Omega\times\{t\})}.
\label{eq:EnergyIneq}
\end{align}
%
% \begin{align}\nonumber
% \der{}t \calE(t)
% &=\int_{\Omega\times\{t\}}\Big(\vE \der{(\epsil\vE)} t+\vH \der{(\mu\vH)} t\Big)\di x
% =\int_{\Omega\times\{t\}}\Big(-\der{}x(\vE \vH)+ \vE\psi +\vH \phi\Big)\di x\\
% &=-\underbrace{\vE(x_R,t)}_{=0}\vH(x_R,t) + \underbrace{\vE(x_L,t)}_{=0}\vH(x_L,t)
% +\int_{\Omega\times\{t\}}\Big(\vE\psi +\vH \phi\Big)\di x
% \label{eq:dtE}\\
% \eqref{eq:dtE}\Rightarrow  \qquad &
% \mmbox{\calE(t)=\calE(0)+\iint_{\Omega\times(0,t)}\Big(\vE\psi +\vH \phi\Big)\di x\di s}
% \label{eq:EnergyId}
% \\
% \eqref{eq:dtE}\Rightarrow \qquad
% &\der{}t \calE(t)
% \le\frac12
% \int_{\Omega\times\{t\}}\Big( \epsil\vE^2+\epsil^{-1}\psi^2 +\mu\vH^2+ \mu^{-1}\phi^2\Big)\di x\
% \nonumber\\
% &=\calE(t)+\frac12\N{\epsil^{-1/2}\psi}^2_{L^2(\Omega\times\{t\})}
% +\frac12\N{\mu^{-1/2}\phi}^2_{L^2(\Omega\times\{t\})}
% \end{align}
%
Gronwall's lemma as in \cite[p.~624]{EVA02},
$\eta'(t)\le a(t)\eta(t)+b(t)\Rightarrow \eta(t)\le e^{\int_0^t
  a(s)\di s}(\eta(0)+\int_0^t b(s)\di s)$,
applied to~\eqref{eq:EnergyIneq} gives 
\begin{align}\label{eq:EnergySource}
\calE(t)\le e^t\Big(
\underbrace{\calE(0)}_{=0}+\frac12\N{\mu^{-1/2}\phi}^2_{L^2(\Omega\times(0,t))}
+\frac12\N{\epsil^{-1/2}\psi}^2_{L^2(\Omega\times(0,t))}
\Big).
\end{align}
%The bound~\eqref{eq:EnergySource}, taking into account the definition of $\calE(t)$ in~\eqref{EnergyDef}, allows to bound the terms on horizontal faces. 
Taking into account the definition of $\calE(t)$ in~\eqref{EnergyDef}, the bound \eqref{eq:EnergySource} allows to control the terms on horizontal faces.
We denote by $T_{\mathrm{hor}}$ the set 
$\big\{t\in(0,T], \text{ such that } (x,t)\in\Fhor\cup\FT\text{ for some }x_L<x<x_R\big\}$;
recall that $N_{\mathrm{hor}}=\#T_{\mathrm{hor}}$.
Using $1/\epsil\mu=c^2$, we have
\begin{align}
&\N{\epsil^{1/2}\vE}^{2}_{L^2(\Fhor\cup{\FT})}
+\N{\mu^{1/2}\vH}^{2}_{L^2(\Fhor\cup{\FT})}
\nonumber\\
&\qquad\qquad\qquad
\le \sum_{t_j\in T_{\mathrm{hor}}} e^{t_j}
\bigg(\N{\mu^{-1/2}\phi}^2_{L^2(\Omega\times(0,t_j))}+\N{\epsil^{-1/2}\psi}^2_{L^2(\Omega\times(0,t_j))}
\bigg)
\label{eq:HorizBound}\\
%&\qquad\qquad\qquad
%\le N_{\mathrm{hor}}\, e^T \bigg(\N{c\epsil^{1/2}\phi}^2_{L^2(Q)}+\N{c\mu^{1/2}\psi}^2_{L^2(Q)} \bigg)
%\\
&\qquad\qquad\qquad
\le %\underbrace{
(N_{\mathrm{hor}}\, e^T c_\infty^2)%}_{=(\Cstab^{hor})^2}
\bigg(\N{\epsil^{1/2}\phi}^2_{L^2(Q)}+\N{\mu^{1/2}\psi}^2_{L^2(Q)} \bigg).
\nonumber
\end{align}
%Can be improved using geometric series for uniform time-stepping $t_j=jT/N=j h_t, 1\le j\le N$ to $e^{h_t}(e^T-1)/(e^{h_t}-1)$, useless.

% Two ways of controlling volume norms (needed later):
% integrating \eqref{eq:EnergySource} we have exponential in time
% \begin{align*}
% \N{\epsil^{1/2}\vE}^{2}_{L^2(\Omega\times(0,t))}
% &+\N{\mu^{1/2}\vH}^{2}_{L^2(\Omega\times(0,t))}
% \le (e^{t}-1) \bigg(\N{c\epsil^{1/2}\phi}^2_{L^2(\Omega\times(0,t))}
% +\N{c\mu^{1/2}\psi}^2_{L^2(\Omega\times(0,t))} \bigg)\\
% &\le (e^{t}-1)\N{c}_{L^\infty(\Omega\times(0,t))} \bigg(\N{\epsil^{1/2}\phi}^2_{L^2(\Omega\times(0,t))}
% +\N{\mu^{1/2}\psi}^2_{L^2(\Omega\times(0,t))} \bigg).
% \end{align*}
% Better version, directly from energy identity
%
Integrating~\eqref{EnergyDef} in $(0,t)$ gives
\begin{align*}
\N{\epsil^{1/2}\vE}^{2}_{L^2(\Omega\times(0,t))}
&+\N{\mu^{1/2}\vH}^{2}_{L^2(\Omega\times(0,t))}
=2\int_0^t \calE(s) \di s\\
&\overset{\eqref{eq:EnergyId}}=2\int_0^t
\Big(%\underbrace{\calE(0)}_{=0}+
\iint_{\Omega\times(0,s)} (\vE\psi+\vH\phi)  \di x\di r\Big) \di s\\
&\le t\Big(\N{\epsil^{1/2}\vE}^{2}_{L^2(\Omega\times(0,t))}
+\N{\mu^{1/2}\vH}^{2}_{L^2(\Omega\times(0,t))}\Big)^{1/2}\\
&\qquad\cdot
\Big(\N{c\epsil^{1/2}\phi}^{2}_{L^2(\Omega\times(0,t))}
+\N{c\mu^{1/2}\psi}^{2}_{L^2(\Omega\times(0,t))}\Big)^{1/2},
\end{align*}
which implies
\begin{align}\label{eq:L2Q}
\N{\epsil^{1/2}\vE}^{2}_{L^2(\Omega\times(0,t))}
%&
\!+\N{\mu^{1/2}\vH}^{2}_{L^2(\Omega\times(0,t))}%\\
%\le t^2 \Big(\N{c\epsil^{1/2}\phi}^{2}_{L^2(\Omega\times(0,t))}
%+\N{c\mu^{1/2}\psi}^{2}_{L^2(\Omega\times(0,t))}\Big)\\
%&
\le t^2 c_\infty^2
\Big(\N{\epsil^{1/2}\phi}^{2}_{L^2(\Omega\times(0,t))}
\!+\N{\mu^{1/2}\psi}^{2}_{L^2(\Omega\times(0,t))}\Big).
%\nonumber
\end{align}

We proceed now by bounding the terms on vertical edges.
We denote by $x_K$ the midpoint of $K$ in $x$ direction, 
so that $2(x-x_K)\le h^x$ for all $(x,t)\in K$, by $\partial K^{\mathrm{WE}}$ and $\partial K^{\mathrm{SN}}$ the union of the 
vertical and horizontal edges of $K$, respectively.
Taking into account the expression of $\alpha$ and $\beta$
in~\eqref{eq:alphabeta} and defining for brevity
$A:=\min\{{\tt a}^{-1},{\tt b}^{-1}\}$,
we have
\begin{align*}
&\N{\beta^{-1/2}\vE}^{2}_{L^2(\partial K^{\mathrm{WE}})}
+\N{\alpha^{-1/2}\vH}^{2}_{L^2(\partial K^{\mathrm{WE}})}\\
&\le A\frac{h_K^x}{h^x}\left(
\N{\mu^{-1/2}\vE}^{2}_{L^2(\partial K^{\mathrm{WE}})}
+\N{\epsil^{-1/2}\vH}^{2}_{L^2(\partial K^{\mathrm{WE}})}
\right)\\
&=A\frac{h_K^x}{h^x}\frac2{h_K^x}\iint_K \frac{\partial}{\partial x}
\bigg((x-x_K)\mu^{-1}\vE^2+(x-x_K) \epsil^{-1}\vH^2 \bigg)\di x\di t\\
&=\frac{2A}{h^x}\iint_K \bigg(\mu^{-1}\vE^2+\epsil^{-1}\vH^2
+2(x-x_K)\Big(\mu^{-1} \vE\der \vE x+ \epsil^{-1}\vH\der\vH x\Big)\bigg)\di x\di t
\\
&\overset{\eqref{eq:dualIVP}}=\frac{2A}{h^x}\iint_K \bigg(\mu^{-1}\vE^2+\epsil^{-1}\vH^2
+2(x-x_K)\Big(-\der{}t(\vE\vH)+ \mu^{-1}\vE \phi+ \epsil^{-1}\vH\psi\Big)\bigg)\di x\di t
\\
&\le\frac{2A}{h^x}\iint_K \bigg(2\mu^{-1}\vE^2+2\epsil^{-1}\vH^2
+(x-x_K)^2\Big( \mu^{-1}\phi^2+ \epsil^{-1}\psi^2\Big)\bigg)\di x\di t\\
&\qquad+\frac{2A}{h^x}\int_{\partial K^{\mathrm{SN}}}2\abs{(x-x_K) \vE\vH}\di x
%-\frac2{h_K^x}\int_{\partial K^N}2(x-x_K)\vE\vH\di x.
\end{align*}
Using $2\vE\vH\le (\xi\vE^2+\frac1\xi\vH^2)$ with
weight $\xi=\epsil c =(\mu c)^{-1}$, %and $\mu^{-1}=c^2\epsil$, 
we have the bound
\begin{align*}
&\sum_{K\in\calT_h}\bigg(\N{\beta^{-1/2}\vE}^{2}_{L^2(\partial K^{\mathrm{WE}})}
+\N{\alpha^{-1/2}\vH}^{2}_{L^2(\partial K^{\mathrm{WE}})}\bigg)
\\
&= \frac{4A}{h^x}\N{\mu^{-1/2}\vE}^2_{L^2(Q)}+\frac{4A}{h^x}\N{\epsil^{-1/2}\vH}^2_{L^2(Q)}
+\frac{A h^x}2\N{\mu^{-1/2}\phi}^2_{L^2(Q)}+\frac{A h^x}2\N{\epsil^{-1/2}\psi}^2_{L^2(Q)}\\
&\qquad+2A\int_{\Fhor\cup\FT}c(\epsil\vE^2+\mu\vH^2)\di x
\\
&\overset{\eqref{eq:L2Q},\eqref{eq:HorizBound}}\le
A\bigg(\frac{4T^2}{h^x} c_\infty^4 + \frac{h^x}2 c_\infty^2+ 2c_\infty^3 N_{\mathrm{hor}} e^T\bigg)
\bigg(\N{\epsil^{1/2}\phi}^2_{L^2(Q)}+\N{\mu^{1/2}\psi}^2_{L^2(Q)}\bigg),
\end{align*}
recalling that $c_\infty=\N{c}_{L^\infty(Q)}$.
%
%[Maybe there's a way to show \eqref{eq:L2Q} with opposite coefficients at LHS ($\N{\mu^{-1/2}\vE}...$) and get rid of some powers of $c$, not sure.]

This, together with \eqref{eq:HorizBound}, gives the
bound~\eqref{eq:DualStability} with constant $\Cstab^2$ as in~\eqref{eq:Cdual}.
\end{proof}

%Note that, under the definitions of the parameters entering $\Cstab$ in \eqref{eq:Cdual}, a tensor product mesh contains $N_{\mathrm{hor}}(N_{\mathrm{ver}}-1)$ rectangular elements.
%If, moreover, 
In case of a tensor product mesh with all elements having horizontal edges of length $h^x$ and vertical edges of length $h^t=h^x/c$, %then 
the constant $\Cstab$ is proportional to $(h^x)^{-1/2}$.
% As a consequence, %for these special meshes, 
% the orders of $h$-convergence of the Trefftz-DG method in $L^2(Q)$~norm will be half a power lower than those in DG~norm.
We stress that we cannot expect a bound like \eqref{eq:DualStability} with $\Cstab$ independent of the meshwidth: 
indeed if the mesh is refined, say, uniformly, while the term in the brackets in the right-hand side of \eqref{eq:DualStability} is not modified, the left-hand side grows (consider e.g.\ the simple case $\phi=\mu$, $\psi=0$, $\vE=0$, $\vH=t$).

One could attempt to derive the stability bound
\eqref{eq:DualStability} by controlling with $(\phi,\psi)$ either
the $H^1(Q)$ or the $L^\infty(Q)$ norm of $(\vE,\vH)$, since both
these norms would then control the desired mesh-skeleton norm.
However, this is not possible, as the solution of problem
\eqref{eq:dualIVP} is in general not bounded in those norms.
Consider for example the simple case with source 
$\phi=\psi=\sqrt\ell\chi_{\{0<x-t<\ell^{-1}\}}$ for $\ell\in\IN$, in $Q=(-3,3)\times(0,1)$ and with $\epsil=\mu=1$.
The source is uniformly bounded in $L^2(Q)$
with respect to $\ell$, namely
$\N{\phi}^2_{L^2(Q)}=\N{\psi}^2_{L^2(Q)}=1$, 
%$ f=2\phi,g=0, u=2t\sqrt\ell\chi_{\{0<x-ct<\ell^{-1}\}}$
but the solution
$\vE=\vH=t\sqrt\ell\chi_{\{0<x-t<\ell^{-1}\}}$ has a jump
across 
$x=t$, so it does not belong to $H^1(Q)$, and
$\N{\vE}_{L^\infty(Q)}=\N{\vH}_{L^\infty(Q)}=\sqrt\ell$ is not
uniformly bounded with respect to $\ell\in\IN$.

\subsection{Energy considerations}\label{ss:Energy}

Define the continuous and discrete energies at a given time $t\in[0,T]$:
\[
\calE(t):=\frac12\int_{\Omega\times\{t\}}(\epsil E^2+\mu H^2)\di x,\qquad
\calE_{hp}(t):=\frac12\int_{\Omega\times\{t\}}(\epsil \Ehp^2+\mu
\Hhp^2)\di x.
\]

Consider the case where $E_L=E_R=0$ and $J=0$; we have that the energy is preserved for
  the continuous problem. 

In fact, proceeding like in the first step of the proof of
  Lemma~\ref{lem:DualStability},
taking into account the equations in~\eqref{eq:IVP},
together with $E_L=E_R=0$ and $J=0$, we have $\frac{\partial}{\partial
  t}\calE(t)=0$,
which implies $\calE(t)=\calE(0)$ for any
$t>0$.
%\ipnote{Old argument commented.}

% To show this, we introduce the following vector field, defined in~$Q$,
% \[
% \bw := (EH)\widehat \bx + \frac12(\epsil E^2+\mu
% H^2)\widehat \bt.
% \]
% If $(E,H)$ is the analytical solution to~\eqref{eq:IVP}, the
% vector $\bw$ is divergence-free (in space--time). In fact,
% since $\der{E}x+\der{(\mu H)}t=\der{H}x+\der{(\epsil E)}t=0$ and $J=0$,
% we have
% $\dive_{x,t}\bw:=\der{w_1}x+\der{w_2}t=\der{E}x H+E \der{H}x+\der{(\epsil E)}t E+\der{(\mu H)}t H=0$.
% Thus, we have
% \[
% \begin{split}
% 0=&\iint_{\ip{\Omega\times(0,t)}}\dive_{x,t}\bw \di x\di t\\
% =&-\frac{1}{2}\int_\FO(\epsil E_0^2+\mu
% H_0^2) \di x+\frac{1}{2}\int_{\Omega\times\{t\}} (\epsil E^2+\mu H^2) \di x
% -\int_\FL E_LH \di t  +\int_\FR E_RH \di t, 
% \end{split} 
% \]
% and the last two terms are zero due to $E_L=E_R=0$. Thus, for any
% $t>0$, $\calE(t)=\calE(0)$.

We turn now to the discrete case. In order to compute
  $\calE_{hp}(T)=\frac{1}{2}\int_\FT(\epsil \Ehp^2+\mu
\Hhp^2)\di x$, we consider the identity
%To prove energy conservation in the discrete case, we extract the integral on $\FT$ from the identity 
$$0\overset{\eqref{eq:VFTDG}}=
\ell_{TDG}(\Ehp,\Hhp)-a_{TDG}(\Ehp,\Hhp;\Ehp,\Hhp)
\overset{\eqref{eq:Coercivity}}=\ell_{TDG}(\Ehp,\Hhp)-\Tnorm{(\Ehp,\Hhp)}_{DG}^2,$$
 and obtain, by simply expanding both terms and moving to the left-hand side the term on~$\FT$,
\begin{align}\label{eq:DiscreteEnergyEvo}
\calE_{hp}(T) =\: & \frac12\int_\FO(\epsil E_0^2+\mu H_0^2)\di x
-\frac12\int_\FO\big(\epsil (\Ehp-E_0)^2+\mu (\Hhp-H_0)^2\big)\di x
\\
&-\frac12\int_\Fhor (\epsil\jmp{\Ehp}^2_t+\mu\jmp{\Hhp}_t^2)\di x
-\int_\Fver (\alpha\jmp{\Ehp}^2_x+\beta\jmp{\Hhp}_x^2)\di t
\nonumber\\
&+ \int_\FL \big(\alpha\Ehp(E_L-\Ehp)+E_L\Hhp\big)\di t
+ \int_\FR \big(\alpha\Ehp(E_R-\Ehp)-E_R\Hhp\big)\di t.
\nonumber
\end{align}
Observing the signs of the terms in this expression, we note that in the case $E_L=E_R=0$ the method is dissipative:
$\calE_{hp}(T) \le \frac12\int_\FO(\epsil E_0^2+\mu H_0^2)\di x$.

% If the lateral boundary conditions are non-zero, i.e.\ $E_L,E_R\ne0$, then they can pump in energy, so we do not expect dissipation.
% The growth of the energy will be controlled by $\N{E_L}_{L^2(\FL)}$
% and $\N{E_R}_{L^2(\FR)}$ multiplied by the corresponding norms of the
% discrete solution.

\subsection{The %initial boundary value 
problem with Robin boundary conditions}\label{ss:Robin}

So far we have studied the initial boundary value problem
\eqref{eq:IVP} with
Dirichlet boundary conditions only ($E=E_{L/R}$ on $\calF_h^{L/R}$). % ($E=E_L$ on $\FL$ and $E=E_R$ on $\FR$).
In the case of the problem \eqref{eq:RobinIVP} with Robin boundary conditions ($\epsil^{1/2}E-\mu^{1/2}Hn_Q^x=g_{L/R}$ on $\calF_h^{L/R}$) the formulation and the
analysis of the Trefftz-DG scheme follow in the same way.
We outline in this section the differences.
We denote the modified quantities with the superscript $\calR$.

We fix a new flux parameter $\delta\in L^\infty(\FL\cup\FR)$ 
satisfying $0<\delta_*\le \delta\le \delta^*<1$.
The numerical fluxes \eqref{eq:Fluxes} on $\FL$ and $\FR$ are modified as 
\begin{align*}
&\hEhp^\calR=\begin{cases}
\Ehp-\delta\big(\Ehp+(\mu/\epsil)^{1/2}\Hhp-\epsil^{-1/2}g_L\big)
&\oon \FL,\\
\Ehp-\delta\big(\Ehp-(\mu/\epsil)^{1/2}\Hhp-\epsil^{-1/2}g_R\big)
&\oon \FR,
\end{cases}\\
&\hHhp^\calR=\begin{cases}
\Hhp+(1-\delta)\big(-(\epsil/\mu)^{1/2}\Ehp-\Hhp+\mu^{-1/2}g_L\big)
&\oon \FL,\\
\Hhp+(1-\delta)\big((\epsil/\mu)^{1/2}\Ehp-\Hhp-\mu^{-1/2}g_R\big)
&\oon \FR.
\end{cases}
\end{align*}
This choice arises from imposing consistency (i.e.\ for $\Ehp=E$ and
$\Hhp=H$ we recover $\widehat E^\calR=E$ and $\widehat H^\calR=H$), and
imposing
that the fluxes themselves satisfy the boundary condition
(i.e.\ $\epsil^{1/2}\hEhp^\calR-\mu^{1/2}\hHhp^\calR n_Q^x=g_{L/R}$).
We note that now the parameter $\alpha$ needs to be defined on $\Fver$
only (as opposed to on $\Fver\cup\FL\cup\FR$ in the case of
  Dirichlet boundary conditions).

The bilinear forms $a_{DG}(\cdot,\cdot)$ in \eqref{eq:Bilin} 
and $a_{TDG}(\cdot,\cdot)$ in \eqref{eq:BilinTDG}
are modified only in the terms on $\FL$ and $\FR$;
for example, $a_{TDG}(\cdot,\cdot)$ becomes
\begin{align*}
a_{TDG}^\calR(&\Ehp,\Hhp;\vE,\vH)\\
=\ldots\;&+
\int_\FL\Big(-(1-\delta)\Ehp \vH+\delta(\mu/\epsil)^{1/2}\Hhp\vH 
-\delta\Hhp\vE+(1-\delta)(\epsil/\mu)^{1/2}\Ehp\vE
\Big)\di t \\
&+\int_\FR\Big((1-\delta)\Ehp \vH+\delta(\mu/\epsil)^{1/2}\Hhp\vH 
+\delta\Hhp\vE+(1-\delta)(\epsil/\mu)^{1/2}\Ehp\vE
\Big)\di t.
\end{align*}
Similarly, the terms on lateral sides of the linear functional
$\ell_{TDG}(\cdot)$ become
\begin{align*}
\ell_{TDG}^\calR (\vE,\vH)
=\ldots\;&+
\int_\FL\Big(\delta\epsil^{-1/2} \vH+(1-\delta)\mu^{-1/2}\vE\Big)\,g_L\di t \\
&+\int_\FR\Big(-\delta\epsil^{-1/2}\vH+(1-\delta)\mu^{-1/2}\vE\Big)\,g_R\di t;
\end{align*}
the same holds for $\ell_{DG}(\cdot)$.
We also modify the terms on $\FL\cup\FR$ in the DG norm in \eqref{eq:DGNorms} as follows:
\begin{align*}
\Tnorm{(\vE,\vH)}^2_{DG^\calR}=\ldots+
\N{(1-\delta)^{1/2}(\epsil/\mu)^{1/4}\vE}_{L^2(\FL\cup\FR)}^2
+\N{\delta^{1/2}(\mu/\epsil)^{1/4}\vH}_{L^2(\FL\cup\FR)}^2.
\end{align*}
Note that now, on $\FL$ and $\FR$, both $\vE$ and $\vH$ are
controlled by the DG${}^\calR$ norm.
For this reason, in view of establishing a continuity property for  $a_{TDG}^\calR(\cdot,\cdot)$, we %can 
define the DG${}^{\calR+}$ norm
to be equal to the DG${}^+$ norm in \eqref{eq:DGNorms}
after removing the last term on the lateral sides.
%The expression of the DG${}^{\calR+}$ norm equals that of the DG${}^+$ norm in \eqref{eq:DGNorms}
%after removing the last term on the lateral sides.

The coercivity property of Lemma \ref{lem:DGisNorm} and
Proposition~\ref{prop:Coercivity} holds without modifications for $a_{TDG}^\calR(\cdot,\cdot)$ and $\Tnorm{\cdot}_{DG^\calR}$.
The continuity constant of the sesquilinear form now depends on the
parameter $\delta$:
\begin{align*}
\abs{a_{TDG}^\calR (E,H;\vE,\vH)}
\le C_c^\calR\,
\Tnorm{(E,H)}_{DG^{\calR+}}\, \Tnorm{(\vE,\vH)}_{DG^\calR} 
\end{align*}
for all $(E,H),(\vE,\vH)\in \bT(\calT_h)$
with
\begin{align}\label{eq:RobinCc}
C_c^\calR:=\sqrt2\bigg(1+\max\bigg\{\frac{1-\delta_*}{\delta_*};\frac{\delta^*}{1-\delta^*}\bigg\}\bigg) ^{1/2}.
\end{align}
Note that the simplest choice $\delta=1/2$ on $\FL\cup\FR$ gives $C_c^\calR=2\sqrt2$.
As in Theorem~\ref{th:QuasiOpt}, the quasi-optimality follows:
\begin{align}\label{eq:RobinQuasiOpt}
\Tnorm{(E,H)-(\Ehp,\Hhp)}_{DG^\calR}
\le  (1+C_c^\calR) \inf_{(\vE,\vH)\in\bV_p\Th}
\Tnorm{(E,H)-(\vE,\vH)}_{DG^{\calR+}}.
\end{align}

The linear functional $\ell^\calR_{TDG}(\cdot)$ is now bounded in the DG${}^\calR$
norm, even for non-zero boundary conditions, thus the Trefftz-DG
solution depends continuously on the problem data.
This is a slightly stronger property than that holding in the Dirichlet case, see Remark \ref{rem:ContinuousDep}.

Homogeneous Robin boundary conditions, as written in \eqref{eq:RobinIVP} with $g_L=g_R=0$, correspond to absorbing materials, i.e.\ waves hitting the boundary are not reflected back into the domain~$Q$.
In the non-homogeneous case, $g_L$ and $g_R$ define the wave components entering $Q$ from the sides.
This is reflected in the following energy identity for the continuous problem:
\begin{align*}
\calE^\calR(T)&=\calE^\calR(0)+\int_{\FL}EH\di t-\int_{\FR}EH\di t\\
&=\calE^\calR(0)-\int_{\FL\cup\FR}\big(\delta(\mu/\epsil)^{1/2} H^2
+(1-\delta)(\epsil/\mu)^{1/2} E^2\big)\di t\\
&+\int_{\FL}\Big(\delta\epsil^{-1/2}H+(1-\delta)\mu^{-1/2}E\Big)g_L\di t
+\int_{\FR}\Big(-\delta\epsil^{-1/2}H+(1-\delta)\mu^{-1/2}E\Big)g_R\di t.
\end{align*}
The second equality is derived by splitting
  $EH=\delta EH+(1-\delta)EH$, then substituting in the first and
  second term the expressions of $E$ and $H$, respectively, given by
  the boundary conditions in~\eqref{eq:RobinIVP}. Note that the value of the right-hand
  side is the same for any $\delta\in [0,1]$.
This identity is closely replicated by the Trefftz-DG discretisation: in the evolution \eqref{eq:DiscreteEnergyEvo} of the
discrete energy $\calE_{hp}$, the terms on $\FL$ and $\FR$ are
substituted by
\begin{align*}
&\calE_{hp}^\calR(T)=\ldots -\int_{\FL\cup\FR}\big(\delta(\mu/\epsil)^{1/2}
\Hhp^2+(1-\delta)(\epsil/\mu)^{1/2} \Ehp^2\big)\di t\\
&+\int_{\FL}\!\Big(\delta\epsil^{-1/2}\Hhp+(1-\delta)\mu^{-1/2}\Ehp\Big)g_L\di t
+\int_{\FR}\!\Big(-\delta\epsil^{-1/2}\Hhp+(1-\delta)\mu^{-1/2}\Ehp\Big)g_R\di t.
\end{align*}
% \red{\begin{align*}
% EH&=(\mu/\epsil)^2 H^2n_Q^x+\epsil^{-1/2}gH
% =(\epsil/\mu)^2 E^2n_Q^x-\mu^{-1/2}gEn_Q^x\\
% &=\delta(\mu/\epsil)^2 H^2n_Q^x+(1-\delta)(\epsil/\mu)^2 E^2n_Q^x
% +\delta\epsil^{-1/2}gH-(1-\delta)\mu^{-1/2}gEn_Q^x
% \end{align*}}

Defining $\alpha$ on $\FL\cup\FR$ to be equal to
$(1-\delta)(\epsil/\mu)^{1/2}$, we note that
$\Tnorm{(\wE,\wH)}_{DG}\le \Tnorm{(\wE,\wH)}_{DG^\calR}$ for all
Trefftz functions $(w_E,w_H)\in\bT\Th$.
This guarantees that the result of Proposition~\ref{prop:Duality},
namely the control of the $L^2(Q)$ norm with the DG norm for Trefftz
functions, holds for the DG${}^\calR$ norm as well.
% with the same constant $\Cstab$ from \eqref{eq:DualStability}.
%The value of the bounding constant $C_{\mathrm{dual}}$ might be
%improved by adapting the proof of Lemma~\ref{lem:DualStability} to
%the Robin setting.

Combining the results sketched in this section with the approximation
bounds derived in section~\ref{s:BestApprox} (which are independent of
the type of boundary conditions employed), we obtain convergence
estimates for the Trefftz-DG scheme for problem \eqref{eq:RobinIVP};
this is addressed in Remarks~\ref{rem:RobinConvRates} and
\ref{rem:RobinAnalytic} below.

%==================================================================================================
\section{Best approximation estimates}\label{s:BestApprox}

\subsection{Left- and right-propagating waves}

In order to approximate the solutions of the Maxwell system, we decompose them into two components, one propagating to the right and one to the left.
This allows to represent the solutions in terms of two functions of one real variable.
In this section we describe the relation between fields defined in the space--time domain and their one-dimensional representations.
In the next section this will be used to reduce the proof of approximation estimates for Trefftz spaces to classical one-dimensional polynomial approximation results.

Let $D=(x_0,x_1)\times(t_0,t_1)$ be a space--time rectangle such that $\epsil$ and $\mu$ are constant in it. 
In correspondence to $D$, we define the two intervals
\begin{equation}\label{eq:Omegapm}
\begin{aligned}
\Omega_{D}^- &:=(x_0-ct_1,x_1-ct_0),\\
\Omega_{D}^+ &:=(x_0+ct_0,x_1+ct_1),
\end{aligned}
\end{equation}
and denote their length by
\begin{equation}\label{eq:hD}
 h_D:= x_1-x_0+c(t_1-t_0).
\end{equation}
Their relevance is the following: the restriction to $D$ of the solution of a Maxwell initial value problem posed in $\IR\times\IR^+$ will depend only on the initial conditions posed on $\ODm \cup\ODp $; see Figure~\ref{fig:ODpm}.

\begin{figure}[htb!]  \centering
\begin{tikzpicture}[scale=1]
\draw [->] (-1.3,-1.2)--(-1.3,2);\draw(-1.1,2)node{$t$};
\draw [very thick] (0,0)--(1.5,0)--(1.5,1)--(0,1)--(0,0);\draw(0.75,0.5)node{$D$};
\draw [->] (-4.5,-1) -- (6,-1);\draw(6.2,-1)node{$x$}; 
 \draw (0,1)--(-4,-1); 
 \draw (1.5,0)--(-0.5,-1); 
 \draw (1.5,1)--(5.5,-1); 
 \draw (0,0)--(2,-1); 
\draw [ultra thick] (-4,-1)--(-0.5,-1);\draw(-2,-1.3)node{$\ODm$}; 
\draw [ultra thick] (2,-1)--(5.5,-1);  \draw(3.85,-1.3)node{$\ODp$}; 
\draw [dashed] (0,0)--(0,-1);		\draw(0,-1.3)node{$x_0$};
\draw [dashed] (1.5,0)--(1.5,-1);	\draw(1.5,-1.3)node{$x_1$};
\draw [dashed] (0,0)--(-1.3,0);		\draw(-1.6,0)node{$t_0$};
\draw [dashed] (0,1)--(-1.3,1);		\draw(-1.6,1)node{$t_1$};
\end{tikzpicture}
\caption{The intervals $\ODpm$ in \eqref{eq:Omegapm} corresponding to the space--time rectangle $D$.}\label{fig:ODpm}
\end{figure}
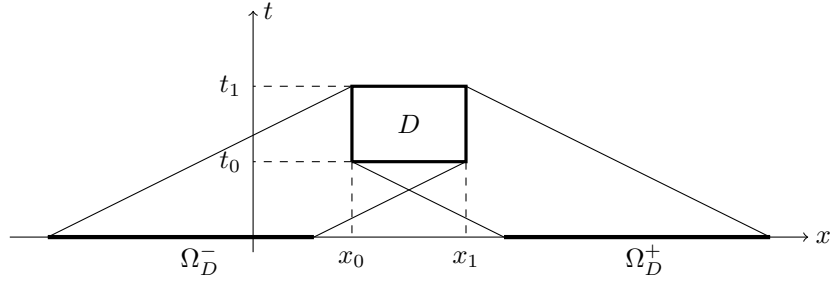

Let $\malpha=(\alpha_x,\alpha_t)\in \IN_0^2$ be a multi-index;
for a sufficiently smooth function $v$, 
we define its
anisotropic derivative $D_c^\malpha v$ as
\[
D_c^\malpha v(x,t):=\frac{1}{c^{\alpha_t}}D^\malpha v(x,t)
=\frac{1}{c^{\alpha_t}}\frac{\partial^{\abs{\malpha}}v(x,t)}{\partial_x^{\alpha_x}\partial_t^{\alpha_t}}.
\]
Note that, if $u$ and $w$ satisfy
\begin{equation}\label{eq:leftright}
u(x,t)=u_0(x-ct),\qquad w(x,t)=w_0(x+ct),
\end{equation}
with $u_0$ and $w_0$ defined in $\ODm $ and $\ODp $, respectively, then
\begin{align*}
D_c^\malpha u(x,t)&=(-1)^{\alpha_t}u_0^{(\abs{\malpha})}(x-ct),\\
D_c^\malpha w(x,t)&=w_0^{(\abs{\malpha})}(x+ct).
\end{align*}

%Let $k>0$ be a weight of physical dimension $(\text{length})^{-1}$.
We define the Sobolev spaces $W^{j,\infty}_c(D)$ and $H^j_c(D)$ as the
spaces of functions whose $D^\malpha_c$ derivatives,
$0\le\abs{\malpha}\le j$, belong to $L^\infty(D)$ and $L^2(D)$, respectively.
We define the following seminorms:
\[
\begin{aligned}
&\abs{v}_{W^{j,\infty}_c(D)}:=\sup_{\abs{\malpha}=j}\N{D^\malpha_c v}_{L^\infty(D)}, \qquad
%&&\N{v}_{W^{m,\infty}_c(D)}=\sup_{0\le j\le m} k^{m-j}\abs{v}_{W^{j,\infty}_c(D)}, \\
&\abs{v}_{H^j_c(D)}^2 :=\sum_{\abs{\malpha}=j}\N{D^\malpha_c
  v}_{L^2(D)}^2.%, \qquad
%&&\N{v}_{H^m_c(D)}^2=\sum_{j=0}^m k^{2(m-j)}\abs{v}_{H^j_c(D)}^2.
\end{aligned}
\]
Note that for $j=0$ they reduce to the usual $L^\infty(D)$ and
$L^2(D)$ norms and we omit the subscript $c$.
On the segments $\ODpm$, %the norms and 
the $W^{j,\infty}(\ODpm)$ and $H^j(\ODpm)$ seminorms are defined in
the standard way.
We finally define
%note{I postponed the def. of $H^1_c(D)$ because the previous sentence refers to the $j$-seminorms.} 
the weighted $H^1_c(D)$ norm (recall the definition of $h_D$ in~\eqref{eq:hD})
\begin{equation}\label{eq:H1cnorm}
\N{v}^2_{H^1_c(D)}:= h_D^{-1}\N{v}^2_{L^2(D)}+ h_D\abs{v}^2_{H^1_c(D)}.
% \big(x_1-x_0+c(t_1-t_0)\big)^{-1}\N{v}^2_{L^2(D)}+ 
% \big(x_1-x_0+c(t_1-t_0)\big)\abs{v}^2_{H^1_c(D)}.
\end{equation}

\begin{proposition}\label{prop:Pullback}
Assume that $u(x,t)=u_0(x-ct)$ for $(x,t)\in D$. 
Then, for $j\in\IN_0$,
\begin{itemize}
\item[(i)] $u\in W^{j,\infty}_c(D)$ if and
only if $u_0\in W^{j,\infty}(\ODm )$, and
\[
\abs{u}_{W^{j,\infty}_c(D)}=\abs{u_0}_{W^{j,\infty}(\ODm )};%,\qquad  0\le j\le m.
%\qquad\qquad \N{u}_{W^{m,\infty}_c(D)}=\N{u_0}_{W^{m,\infty}(\ODm )}.
\]
\item[(ii)] if $u\in W^{j,\infty}_c(D)$, then $u_0\in H^j(\ODm )$, and 
\[
\abs{u_0}_{H^j(\ODm )}^2\le
h_D\, %[(x_1-x_0)+c(t_1-t_0)]
\abs{u}_{W^{j,\infty}_c(D)}^2;
%\qquad 0\le j\le m.
\]
\item[(iii)]
if $u_0\in H^j(\ODm )$, then $u\in H^j_c(D)$, and
\begin{align*}
\abs{u}_{H^j_c(D)}^2&\le
(j+1)\min\Big\{(t_1-t_0),\frac{(x_1-x_0)}{c} \Big\}
\abs{u_0}_{H^j(\ODm )}^2.%,\qquad 0\le j\le m,%\\
%\N{u}_{H^m_c(D)}^2&\le
%\min\Big\{(t_1-t_0),\frac{(x_1-x_0)}{c} \Big\}
%\sum_{j=0}^mk^{2(m-j)}(j+1)\abs{u_0}_{H^j(\ODm )}^2,
\end{align*}
Furthermore, if $j=1$,
$$ \N{u}_{H^1_c(D)}^2\le
\frac1c\N{u_0}_{L^2(\ODm )}^2+  \frac{2h_D^2}c\abs{u_0}_{H^1(\ODm )}^2.
$$
\end{itemize}
A similar result holds for $w(x,t)=w_0(x+ct)$, with $\ODp $ instead of $\ODm $.
\end{proposition}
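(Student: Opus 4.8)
\emph{Proof proposal.} The whole statement will follow from one structural observation, so the plan is to set that up first and then read off the three parts. Under the hypothesis $u(x,t)=u_0(x-ct)$, the identity recorded just before the proposition gives $D_c^\malpha u(x,t)=(-1)^{\alpha_t}u_0^{(\abs{\malpha})}(x-ct)$; hence every anisotropic derivative of $u$ of total order $j$ is, up to sign, the single univariate function $u_0^{(j)}$ precomposed with the affine map $\sigma\colon(x,t)\mapsto x-ct$, and the image $\sigma(D)$ is exactly the interval $\ODm=(x_0-ct_1,x_1-ct_0)$, whose length equals $h_D$ by definition \eqref{eq:hD}. Thus each part reduces to comparing norms of $u_0^{(j)}$ on $\ODm$ with integrals of $u_0^{(j)}\circ\sigma$ on $D$.

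For (i), since $\sigma$ maps $D$ onto $\ODm$ (and is an open map, so essential suprema are preserved under composition), $\N{D_c^\malpha u}_{L^\infty(D)}=\N{u_0^{(\abs{\malpha})}}_{L^\infty(\ODm)}$ for every $\malpha$; applying this for all orders $\le j$ and taking the supremum over $\malpha$ with $\abs{\malpha}=j$ yields $\abs{u}_{W^{j,\infty}_c(D)}=\abs{u_0}_{W^{j,\infty}(\ODm)}$, so membership in $W^{j,\infty}_c(D)$ is equivalent to membership in $W^{j,\infty}(\ODm)$. For (ii), I would simply estimate $\N{u_0^{(j)}}_{L^2(\ODm)}^2\le\abs{\ODm}\,\N{u_0^{(j)}}_{L^\infty(\ODm)}^2=h_D\,\abs{u}^2_{W^{j,\infty}_c(D)}$ using (i) and $\abs{\ODm}=h_D$. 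For (iii) with general $j$, there are exactly $j+1$ multi-indices of length $j$, and each contributes $\iint_D\abs{u_0^{(j)}(x-ct)}^2\di x\di t$ to $\abs{u}^2_{H^j_c(D)}$, so it remains to bound that double integral: I would substitute $s=x-ct$ in the inner $x$-integral and then exchange the order of integration (Tonelli), observing that for fixed $s$ the set of admissible $t$ is $(\max\{t_0,(x_0-s)/c\},\min\{t_1,(x_1-s)/c\})$, whose length is at most $\min\{t_1-t_0,(x_1-x_0)/c\}$ uniformly in $s$, while $s$ ranges over $\ODm$. This gives $\iint_D\abs{u_0^{(j)}(x-ct)}^2\di x\di t\le\min\{t_1-t_0,(x_1-x_0)/c\}\,\abs{u_0}^2_{H^j(\ODm)}$, hence the claimed bound, and finiteness of the right-hand side also yields $u\in H^j_c(D)$.

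For the sharper $j=1$ estimate I would apply the inequality just derived with $j=0$ to get $\N{u}^2_{L^2(D)}\le\min\{t_1-t_0,(x_1-x_0)/c\}\,\N{u_0}^2_{L^2(\ODm)}$ and with $j=1$ (where the factor $j+1=2$ appears) to get $\abs{u}^2_{H^1_c(D)}\le 2\min\{t_1-t_0,(x_1-x_0)/c\}\,\abs{u_0}^2_{H^1(\ODm)}$, then insert both into the definition \eqref{eq:H1cnorm}, $\N{u}^2_{H^1_c(D)}=h_D^{-1}\N{u}^2_{L^2(D)}+h_D\abs{u}^2_{H^1_c(D)}$, together with the crude bound $\min\{t_1-t_0,(x_1-x_0)/c\}\le h_D/c$ (valid since both $x_1-x_0$ and $c(t_1-t_0)$ are at most $h_D$). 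The right-propagating case $w(x,t)=w_0(x+ct)$ is handled identically, replacing $\sigma$ by $(x,t)\mapsto x+ct$, $\ODm$ by $\ODp$, and using $\abs{\ODp}=h_D$ as well; the sign factor $(-1)^{\alpha_t}$ never enters. The only genuinely delicate point is the bookkeeping in the Tonelli step of (iii)—in particular that $\sigma(D)$ is \emph{exactly} $\ODm$ with no overshoot, which is what makes the one-dimensional measure match $h_D$, and that the fibre $\sigma^{-1}(s)\cap D$ has length bounded by the \emph{smaller} of $t_1-t_0$ and $(x_1-x_0)/c$; everything else is routine.
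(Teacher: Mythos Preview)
Your proposal is correct and follows essentially the same approach as the paper: the identity $D_c^\malpha u=(-1)^{\alpha_t}u_0^{(|\malpha|)}(x-ct)$ reduces everything to one-dimensional norms, and the bounds in (i)--(iii) are read off exactly as you describe. The only cosmetic difference is in (iii), where the paper slices $D$ directly as $\int_{t_0}^{t_1}\N{u_0^{(j)}}^2_{L^2(x_0-ct,x_1-ct)}\di t$ (or the analogous $x$-slicing) and bounds each slice norm by $\N{u_0^{(j)}}^2_{L^2(\ODm)}$, whereas you change variable and invoke Tonelli with a fibre-length bound; these are the same computation. You also spell out the $j=1$ weighted-norm estimate, which the paper leaves implicit.
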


\begin{proof}
For the $W^{j,\infty}_c(D)$-seminorms in \textit{(i)}, we have
\[
\abs{u}_{W^{j,\infty}_c(D)}=\sup_{\abs{\malpha}=j}\N{D^\malpha_c
  u}_{L^\infty(D)}
=\sup_{\abs{\malpha}=j}\N{u_0^{(\abs{\malpha})} (x-ct)}_{L^\infty(D)}=\abs{u_0}_{W^{j,\infty}(\ODm )}.
\] 
%the identity in the $W^{m,\infty}_c(D)$~norm readily follows.

For the bound of $\abs{u_0}_{H^j(\ODm )}^2$ in \textit{(ii)}, we have
\[
\begin{aligned}
\abs{u_0}_{H^j(\ODm )}^2&=\int_{\ODm }\abs{u_0^{(j)}(z)}^2\di z
\le\abs{\ODm }\sup_{z\in\ODm }\abs{u_0^{(j)}(z)}^2
=\abs{\ODm }\sup_{\abs{\malpha}=j}\sup_{(x,t)\in D}
\abs{D^\malpha_c u(x,t)}^2\\
&=h_D %[(x_1-x_0)+c(t_1-t_0)]
\abs{u}_{W^{j,\infty}_c(D)}^2.
\end{aligned}
\]

Consider now the $H^j_c(D)$-seminorm. We have
\begin{align*}
\abs{u}_{H^j_c(D)}^2&=\sum_{\abs{\malpha}=j}\iint_D\abs{D^\malpha_c u}^2
\di x\di t\\
&=\sum_{\abs{\malpha}=j}\iint_D\abs{u_0^{(\abs{\malpha})}(x-ct)}^2\di x\di t\\
&=(j+1)\iint_D\abs{u_0^{(j)}(x-ct)}^2\di x\di t\\
& = 
(j+1)\min\Big\{
\int_{t_0}^{t_1}\N{u_0^{(j)}}_{L^2(x_0-ct,x_1-ct)}^2\di t,
\int_{x_0}^{x_1}\frac{1}{c}\N{u_0^{(j)}}_{L^2(x-ct_1,x-ct_0)}^2\di  x
\Big\}\\
&\le (j+1)\min\Big\{
\int_{t_0}^{t_1}\N{u_0^{(j)}}_{L^2(\ODm )}^2\di t,
\int_{x_0}^{x_1}\frac{1}{c}\N{u_0^{(j)}}_{L^2(\ODm )}^2\di x
\Big\}\\
&\le (j+1)\min\Big\{(t_1-t_0),
\frac{(x_1-x_0)}{c} \Big\}\N{u_0^{(j)}}_{L^2(\ODm )}^2,
\end{align*}
from which the desired bound in \textit{(iii)} follows. Note that the
two terms in the curly braces in the last equality are equal to each other.
%The bound of $\N{u}_{H^m_c(D)}^2$ is a straightforward consequence.

The result for $w$ is obtained in the same way.
\end{proof}

\begin{rem}\label{rem:ConversePullback}
The inequality opposite to that in
% ip{reverse}
% ipnote{Old version was ``converse''. I have problems  with the word ``converse'', because I understant it as ``viceversa''; here we want to sat ``the opposite inequality'', I don't know whether ``reverse'' is OK...Whatever you say is correct will be fine with me!!} %converse 
%of 
item \textit{(iii)} of %in
Proposition~\ref{prop:Pullback} is not true.
For example, the functions $u_{0,\ell}(z)=\sqrt{\ell}\,\chi_{(x_0-ct_1, x_0-ct_1+\ell^{-1})}(z)$ (where $\chi_{(a,b)}$ denotes the characteristic function of the interval $(a,b)$) belong to $L^2(\ODm)$ for sufficiently large $\ell\in\IN$, and $\N{u_{0,\ell}}_{L^2(\ODm)}=1$.
On the other hand, $u_\ell(x,t)=u_{0,\ell}(x-ct)\in L^2(D)$ but $\N{u_\ell}_{L^2(D)}=(2c\ell)^{-1/2}$, so no bound 
$\N{u_{0,\ell}}_{L^2(\ODm)}\le C\N{u_\ell}_{L^2(D)}$ with $C$
independent of $\ell$ is possible.
%amnote{Dropped: ``For this reason, we will develop our analysis in $W^{j,\infty}$-type norms rather than in $H^j$-type norms.''}
\end{rem}

\subsection{Local discrete Trefftz spaces}\label{ss:LocalTrefftz}
Given a rectangle $D$ as above, we define the corresponding local Trefftz space as
$$\bT(D):=\bigg\{(E ,H )\in H^1(D)^2,\quad
\der{E }x+\der{(\mu H )}t=\der{H }x+\der{(\epsil E )}t=0 \quad
\text{in}\;  D \bigg\}.$$
Any Trefftz field $(E,H)\in\bT(D)$ can be decomposed as
\begin{equation}\label{eq:EHuw}
(E,H)=\left(\frac{u+w}{2 \epsil^{1/2}},\,\frac{u-w}{2 \mu^{1/2}}\right),\quad
\text{with}\quad
u=\epsil^{1/2} E+\mu^{1/2} H \quad\text{and}\quad
w=\epsil^{1/2} E-\mu^{1/2} H.
\end{equation}
The waves $u$ and $w$ satisfy~\eqref{eq:leftright}, i.e.\ they propagate in the right and the left direction, respectively.

Conversely, for any $u_0\in C^m({\overline\Omega_D^-})$ and $w_0\in
C^m({\overline\Omega_D^+})$ for $m\in\IN_0$, the functions $u$ and $w$ as defined in \eqref{eq:leftright} satisfy the wave equation~\eqref{eq:WaveEq} in $D$, 
and the field $(E,H)$ obtained combining them as in \eqref{eq:EHuw} belongs to 
$\bT(D)\cap C^m(\overline D)^2$.

This suggests a construction of discrete subspaces of $\bT(D)$:
given $p\in\IN_0$ and two sets of $p+1$ 
linearly independent functions
${\Phi^- =}\{\varphi_0^-,\ldots,\varphi_p^-\}\subset C^m({\overline\Omega_D^-})$
and  ${\Phi^+ =}\{\varphi_0^+,\ldots,\varphi_p^+\}\subset
C^m({\overline\Omega_D^+})$ we define the space
\begin{align*}
\bV_p(D):=\spn\bigg\{
&\Big(\frac{\varphi_0^-(x-ct)}{2\epsil^{1/2}},\,  \frac{\varphi_0^-(x-ct)}{2\mu^{1/2}}\Big),\ldots,
 \Big(\frac{\varphi_p^-(x-ct)}{2\epsil^{1/2}},\,  \frac{\varphi_p^-(x-ct)}{2\mu^{1/2}}\Big),\\
&\Big(\frac{\varphi_0^+(x+ct)}{2\epsil^{1/2}},\, -\frac{\varphi_0^+(x+ct)}{2\mu^{1/2}}\Big),\ldots,
 \Big(\frac{\varphi_p^+(x+ct)}{2\epsil^{1/2}},\, -\frac{\varphi_p^+(x+ct)}{2\mu^{1/2}}\Big)
\bigg\}, %\subset \bT(D)\cap C^m(\overline D)^2.
\end{align*}
which is a subspace of $\bT(D)\cap C^m(\overline D)^2$ with dimension $2(p+1)$.

By virtue of Proposition~\ref{prop:Pullback}, the approximation
properties of $\bV_p(D)$ in $\bT(D)$ only depend on the approximation
properties of the one-dimensional functions
$\{\varphi_0^\pm,\ldots,\varphi_p^\pm\}$:
for all $(E,H)\in\bT(D)\cap W^{j,\infty}_c(D)^2$, 
defining $u$, $w$, $u_0$ and $w_0$ from $(E,H)$ using \eqref{eq:EHuw} and \eqref{eq:leftright},
%a sufficiently regular Trefftz field $(E,H)\in\bT(D)$ can be approximated as
\begin{align}\label{eq:infEHuw}
&\inf_{(\Ehp,\Hhp)\in\bV_p(D)}
\Big(\abs{\epsil^{1/2}(E-\Ehp)}_{W^{j,\infty}_c(D)}
+\abs{\mu^{1/2}(H-\Hhp)}_{W^{j,\infty}_c(D)}\Big)\\
&\qquad\overset{\eqref{eq:EHuw}}=\inf_{\substack{u_{0,p}\in\spn\{\varphi_0^-,\ldots,\varphi_p^-\},
\nonumber\\
w_{0,p}\in\spn\{\varphi_0^+,\ldots,\varphi_p^+\} }}
\bigg(\frac{1}{2}\abs{{u(x,t)-u_{0,p}(x-ct)+w(x,t)-w_{0,p}(x+ct)}}_{W^{j,\infty}_c(D)}
\nonumber\\
&\hspace{45mm}+\frac{1}{2}\abs{{u(x,t)-u_{0,p}(x-ct)-w(x,t)+w_{0,p}(x+ct)}}_{W^{j,\infty}_c(D)}
\bigg)\nonumber\\
&\overset{\text{Prop.~\ref{prop:Pullback} \textit{(i)}}}\le
\inf_{u_{0,p}\in\spn\{\varphi_0^-,\ldots,\varphi_p^-\}}\abs{u_0-u_{0,p}}_{W^{j,\infty}(\ODm)}
+\inf_{w_{0,p}\in\spn\{\varphi_0^+,\ldots,\varphi_p^+\}}\abs{w_0-w_{0,p}}_{W^{j,\infty}(\ODp )},
%&\hspace{80mm} \forall(E,H)\in\bT(D)\cap W^{j,\infty}_c(D)^2,\\[4mm]
\nonumber
\end{align}
while for all $(E,H)\in\bT(D)\cap H^j_c(D)^2$
\begin{align}\label{eq:infEHuwHj}
&\inf_{(\Ehp,\Hhp)\in\bV_p(D)}
\Big(\abs{\epsil^{1/2}(E-\Ehp)}_{H^j_c(D)}^2
+\abs{\mu^{1/2}(H-\Hhp)}_{H^j_c(D)}^2\Big)\\
%&\qquad\overset{\eqref{eq:EHuw}}=\inf_{\substack{u_{0,p}\in\spn\{\varphi_0^-,\ldots,\varphi_p^-\},\\
%w_{0,p}\in\spn\{\varphi_0^+,\ldots,\varphi_p^+\} }}
%\bigg(\abs{\frac{u(x,t)-u_{0,p}(x-ct)+w(x,t)-w_{0,p}(x+ct)}2}_{H^j_c(D)}^2\\
%&\hspace{45mm}+\abs{\frac{u(x,t)-u_{0,p}(x-ct)-w(x,t)+w_{0,p}(x+ct)}2}_{H^j_c(D)}^2\bigg)\\
%&\qquad\le\inf_{\substack{u_{0,p}\in\spn\{\varphi_0^-,\ldots,\varphi_p^-\},\\
%w_{0,p}\in\spn\{\varphi_0^+,\ldots,\varphi_p^+\} }}
%\Big(\abs{u(x,t)-u_{0,p}(x-ct)}_{H^j_c(D)}^2
%+\abs{w(x,t)-w_0(x+ct)}_{H^j_c(D)}^2\Big)\\
&\overset{\text{Prop.~\ref{prop:Pullback}
    \textit{(iii)}}}\le(j+1)\min\Big\{(t_1-t_0),\frac{(x_1-x_0)}{c}
\Big\}
\nonumber\\
&\hspace{15mm}
\bigg(\inf_{u_{0,p}\in\spn\{\varphi_0^-,\ldots,\varphi_p^-\}}\abs{u_0-u_{0,p}}_{H^j(\ODm)}^2
+\inf_{w_{0,p}\in\spn\{\varphi_0^+,\ldots,\varphi_p^+\}}\abs{w_0-w_{0,p}}_{H^j(\ODp )}^2\bigg).
%\\&\hspace{70mm} \forall(E,H)\in\bT(D)\cap H^j_c(D)^2,
\nonumber
\end{align}

In the following, we are going to consider polynomial bases;
  alternative choices, e.g.\ trigonometric functions, are
possible, as suggested in \cite[section~3.1]{PFT09}.

\subsection{Polynomial Trefftz spaces}\label{ss:PolyTrefftz}

The most straightforward choice for the space $\bV_p(D)$ is to take a
polynomial basis: $\Phi^-=\Phi^+=\{z^j\}_{j=0}^p$.
(Of course, in practical implementations of the Trefftz-DG method different choices of the basis for the same space might be preferred, e.g.\ Legendre polynomials; this however does not affect the approximation properties of the discrete space and the orders of convergence of the scheme.)
In this case, the general 
field $(\vE,\vH)\in\bV_p(D)$ can be written as
\begin{align}\label{eq:localpolyn}
%(\vE,\vH)\in\bV_p(D)
v_E(x,t)&={\epsil^{-1/2} a_0+} \epsil^{-1/2}\sum_{{j=1}}^{p} a_j({x-ct})^j +
\epsil^{-1/2}\sum_{j=1}^{p} b_j({x+ct})^j, \\
v_H(x,t)&={\mu^{-1/2} b_0+}\mu^{-1/2}\sum_{{j=1}}^{p} a_j({x-ct})^j -
\mu^{-1/2}\sum_{j=1}^{p} b_j({x+ct})^j, \quad a_j,b_j\in\IR,\ (x,t)\in D,
\nonumber
\end{align}
Note that the space $\bV_p(D)$ has dimension {$2p+2$}, while the full
polynomial space of the same degree $(\IP^p)^2$ has  %ip{the} better not....
much higher
dimension $(p+1)(p+2)$ but similar approximation properties for solutions of wave equations, as
  demonstrated for example in Figure~\ref{fig:ConvpVersion} below.
Instead of $(\IP^p)^2$, tensor product polynomial spaces could be considered; the space dimension would be $2(p+1)^2$ and the approximation rates would remain unchanged.

We recall that our goal is to control the best approximation error in
the DG${}^+$~norm. 
Observing its definition \eqref{eq:DGNorms}, it is enough to control
the error either in $L^\infty(Q)$ or in $H^1_c(Q)$.
Following the second %first 
route, % (see Remark~\ref{rem:ConversePullback}), 
we prove simple approximation bounds in $H^1_c(Q)$, %$L^\infty(D)$,
for a general rectangle $D\subset Q$, which will be
chosen as a mesh element in section~\ref{s:ConvRates}.

We first consider approximation bounds for functions
with limited Sobolev regularity; then, we prove approximation estimates with exponential rates in the polynomial degree for analytic functions.

\subsubsection{Algebraic approximation}\label{sss:AlgebraicApprox}

Classical $hp$-approximation results, together with a scaling argument, give the following one-dimensional polynomial approximation bound (see \cite[Corollary~3.15]{Schwab98}, with the norms defined in \cite[equation (3.3.10)]{Schwab98}): for all $u\in H^{s+1}(a,b)$, $s,p\in\IN$, and $s\le p$,
\begin{align}\label{eq:SchwabApprox}
\inf_{P\in\IP^p([a,b])} 
\bigg( \frac{p(p+1)}{(b-a)^2}\N{u-P}^2_{L^{2}(a,b)} +\abs{u-P}^2_{H^1(a,b)}\bigg)
&\le 2\frac{(p-s)!}{(p+s)!}
(b-a)^{2s}\abs{u}^2_{H^{s+1}(a,b)},
\end{align}
where $\IP^{p}([a,b])$ denotes the space of polynomials of degree at most $p$ in the interval $[a,b]$.
Combining \eqref{eq:SchwabApprox} with the bound \eqref{eq:infEHuwHj} proved in the
previous section, the definitions of $\ODpm$
\eqref{eq:Omegapm}, $h_D$ \eqref{eq:hD},
$u,w$ \eqref{eq:EHuw}, and $u_0,w_0$ \eqref{eq:leftright}, we have for all
$(E,H)\in \bT\Th\cap W^{s+1,\infty}(D)^2$
\begin{align}\label{eq:FinalBestApproxNEW}
\inf_{(\Ehp,\Hhp)\in\bV_p(D)}&
\Big(\N{\epsil^{1/2}(E-\Ehp)}_{H^1_c(D)}^2
+\N{\mu^{1/2}(H-\Hhp)}_{H^1_c(D)}^2\Big)\\
&\le
\frac 4c \; \frac{(p-s)!}{(p+s)!}\; h_D^{2s+2}
\Big(\abs{u_0}_{H^{s+1}(\ODm)}^2+\abs{w_0}_{H^{s+1}(\ODp)}^2\Big)
\nonumber\\
&\le
\frac 4c \; \frac{(p-s)!}{(p+s)!}\; h_D^{2s+3}
\Big(\abs{u_0}_{W^{s+1,\infty}(\ODm)}^2+\abs{w_0}_{W^{s+1,\infty}(\ODp)}^2\Big)
\nonumber\\
&\hspace{-5mm}\overset{\text{Prop.~\ref{prop:Pullback}\textit{(i)}}}=
\frac 4c \; \frac{(p-s)!}{(p+s)!}\; h_D^{2s+3}
\Big(\abs{u}_{W^{s+1,\infty}_c(D)}^2+\abs{w}_{W^{s+1,\infty}_c(D)}^2\Big)
\nonumber\\
&\overset{\eqref{eq:EHuw}}\le
\frac{16}c \; \frac{(p-s)!}{(p+s)!}\; h_D^{2s+3}
\Big(\abs{\epsil^{1/2} E}_{W^{s+1,\infty}_c(D)}^2+\abs{\mu^{1/2} H}_{W^{s+1,\infty}_c(D)}^2\Big).
\nonumber
\end{align}
% 
% \drop{
% Classical Chebyshev polynomial interpolation results give the following
% approximation bound (see e.g.~\cite[The\-o\-rem~8.7]{SuMa03},
% \cite[Theorem~2.2.4]{DeVi12}): for all $u\in C^{p+1}([a,b])$
% \begin{align}\label{eq:ChebApprox}
% \inf_{P\in\IP^p([a,b])} \N{u-P}_{L^{\infty}(a,b)}
% &\le \frac2{(p+1)!}
% \Big(\frac{b-a}4\Big)^{p+1}\N{u^{(p+1)}}_{L^{\infty}(a,b)},
% \end{align}
% where $p\in\IN_0$ and $\IP^{p}([a,b])$ denotes the space of polynomials of degree at most $p$ in the interval $[a,b]$.
% Combining \eqref{eq:ChebApprox} with the bound \eqref{eq:infEHuw} proved in the
% previous section (for $j=0$), the definitions of $\ODpm$
% \eqref{eq:Omegapm}, $u,w$ \eqref{eq:EHuw}, and $u_0,w_0$ \eqref{eq:leftright}, we have for all
% $(E,H)\in \bT\Th\cap C^{p+1}(\overline D)^2$
% \begin{align}\label{eq:FinalBestApprox}
% &\inf_{(\Ehp,\Hhp)\in\bV_p(D)}
% \Big(\N{\epsil^{1/2}(E-\Ehp)}_{L^{\infty}(D)}
% +\N{\mu^{1/2}(H-\Hhp)}_{L^{\infty}(D)}\Big)\\
% &\hspace{7mm}\le
% \frac2{(p+1)!} \bigg(\frac{x_1-x_0+c(t_1-t_0)}4\bigg)^{p+1}
% \Big(\N{u_0^{(p+1)}}_{L^{\infty}(\ODm)}
% +\N{w_0^{(p+1)}}_{L^{\infty}(\ODp)}\Big)
% \nonumber
% \\
% &\overset{\text{Prop.~\ref{prop:Pullback}\textit{(i)}}}=\frac2{(p+1)!} \bigg(\frac{x_1-x_0+c(t_1-t_0)}4\bigg)^{p+1}
% \Big(\abs{u}_{W^{p+1,\infty}_c(D)}+\abs{w}_{W^{p+1,\infty}_c(D)}\Big)
% \nonumber
% \\
% &\hspace{5.5mm}\overset{\eqref{eq:EHuw}}\le\frac4{(p+1)!} \bigg(\frac{x_1-x_0+c(t_1-t_0)}4\bigg)^{p+1}
% \Big(\abs{\epsil^{1/2} E}_{W^{p+1,\infty}_c(D)}+\abs{\mu^{1/2} H}_{W^{p+1,\infty}_c(D)}\Big).
% \nonumber
% \end{align}
% }
This bound can be used as best approximation estimate in the
convergence analysis of the Trefftz-DG method; we will do this in
section~\ref{s:ConvRates}.
Combining a suitable variant of \eqref{eq:SchwabApprox} with the bounds
in section~\ref{ss:LocalTrefftz}, one could easily obtain similar bounds
in $W^{j,\infty}_c(D)$ and $H^j_c(D)$; however, since the
inequality opposite to that
%ip{reverse}ipnote{See previous note...} %converse 
of item \emph{(iii)} in Proposition~\ref{prop:Pullback} does not hold
(see Remark~\ref{rem:ConversePullback}), we are not able to obtain $H^j_c(D)$
norms  of $(E,H)$ at the right-hand side of the approximation bounds.

\subsubsection{Exponential approximation}\label{sss:ExponentialApprox}

% The approximation bound \eqref{eq:FinalBestApprox} contains a factor
% $1/(p+1)!$, $p$ being the degree of the approximating polynomial waves.
% However, this does not guarantee convergence of the approximation for
% increasing values of $p$, as the $W^{p+1,\infty}_c$ norms of $E$ and
% $H$ at the right-hand side may increase.
The degree $p$ of the polynomial Trefftz discrete space enters the approximation bound \eqref{eq:FinalBestApproxNEW} through the factor $(p-s)!/(p+s)!$, which leads to algebraic convergence with order depending only on the solution regularity $s$.
Classical polynomial approximation theory shows that, if $E$ and $H$
admit analytic extension in a complex neighbourhood of $D$, then
exponential convergence in the polynomial degree $p$ is achieved.

Indeed, if $u_0$ and $w_0$ are analytic in the complex ellipses with foci
at the extrema of $\ODm$ and $\ODp$, respectively, and sum of the
semiaxes equal to $\rho_D(x_1-x_0+c(t_1-t_0))/2$ for $\rho_D>1$,
%%%%%
%%%%% OK new version, because transforming to (-1,1) gives:
%%%%% rho_D := a^ + b^ = 2a/h_D + 2b/h_D = 2(a+b)/h_D 
%%%%% => (a+b) = rho_D h_D / 2
%%%%%%$2\rho_D/(x_1-x_0+c(t_1-t_0))$, 
then by the classical Bernstein theorem (e.g.\ \cite[Chapter~7,
Theorem~8.1]{DeVoreLorentz}) the exponential convergence rate 
in $L^\infty(\ODpm)$
of the polynomial
approximation of $u_0$ and $w_0$ is $\rho_D$:
\begin{equation}\label{eq:Bernstein}
\inf_{P\in\IP^p(\ODm)}\N{u_0-P}_{L^\infty(\ODm)}+
\inf_{P\in\IP^p(\ODp)}\N{w_0-P}_{L^\infty(\ODp)} \le
C_{D,\mathrm{Bern}}\, \rho_D^{-p}
\end{equation}
for some constant $C_{D,\mathrm{Bern}}>0$ independent of $p$.
As in~\eqref{eq:FinalBestApproxNEW}, combining \eqref{eq:infEHuw} and~\eqref{eq:Bernstein}, we
obtain the following exponential approximation bound in the space--time
rectangle:
\begin{align}\label{eq:ExpApprox}
\inf_{(\Ehp,\Hhp)\in\bV_p(D)}
\Big(\N{\epsil^{1/2}(E-\Ehp)}_{L^{\infty}(D)}
+\N{\mu^{1/2}(H-\Hhp)}_{L^{\infty}(D)}\Big)
\le C_{D,\mathrm{Bern}}\, \rho_D^{-p}.
\end{align}

%=======================================================================================
\section{Convergence rates}\label{s:ConvRates}

We now derive the convergence rates of the Trefftz-DG method with polynomial approximating spaces
\begin{equation}
\bV_p\Th=\big\{(\vE,\vH)\in L^2(Q)^2:\ (\vE,\vH)_{|_K}\ \text{are as
  in~\eqref{eq:localpolyn} with $p=p_K$}\big\}.
\label{eq:VpTh}
\end{equation}
The two main ingredients are the quasi-optimality results investigated in
section~\ref{s:Analysis} and the best approximation bounds proved in
section~\ref{s:BestApprox}.
To combine them, we need to control the DG${}^+$~norm
\eqref{eq:DGNorms} of the approximation error with its
$H^1_c(Q)$~norm, weighted with $\epsil$ and $\mu$,
to be able to use the bound \eqref{eq:FinalBestApproxNEW}.
To this purpose, we define the following parameters:
\begin{align}\label{eq:zeta}
\zeta_K:=\max\bigg\{
&\N{\alpha\epsil^{-1}}_{L^\infty(\partial K\cap(\Fver\cup\FL\cup\FR))};
&&\N{\alpha^{-1}\mu^{-1}}_{L^\infty(\partial K\cap(\Fver\cup\FL\cup\FR))};\\
&
\N{\beta\mu^{-1}}_{L^\infty(\partial K\cap\Fver)};\quad
&&\N{\beta^{-1}\epsil^{-1}}_{L^\infty(\partial K\cap\Fver)}
\bigg\}
&&\forall K\in\calT_h.
\nonumber
\end{align}
For any mesh element $K\in\calT_h$, we denote by $h_K^x$, $h_K^t$
the lengths of its horizontal and vertical edges, respectively, i.e.\
the local meshwidths of the discretisation.

Before stating our main convergence theorem, we prove a simple
explicit trace inequality for functions defined on
rectangles.
%\ipnote{Now it looks like a repetition...but it is
%  different than what used in the new proof. I would leave it here.}
\begin{lemma}
Given a space--time rectangle $D=(x_0,x_1)\times(t_0,t_1)$, denote by 
$\partial D^{\mathrm{SN}}=(x_0,x_1)\times\{t_0,t_1\} $ 
and 
$\partial D^{\mathrm{WE}}= \{x_0,x_1\}\times(t_0,t_1)$ 
the decomposition of its boundary in opposite sides.
For all $u\in H^1(D)$, we have the following trace estimates:
\begin{align}
\label{eq:Trace}
\N{u}_{L^2(\partial D^{\mathrm{SN}})}^2&\le
\frac{4}{t_1-t_0} \N{u}_{L^2(D)}^2  + \frac{t_1-t_0}2\N{\frac{\partial u}{\partial t}}_{L^2(D)}^2,\\
\N{u}_{L^2(\partial D^{\mathrm{WE}})}^2&\le
\frac{4}{x_1-x_0} \N{u}_{L^2(D)}^2  + \frac{x_1-x_0}2\N{\frac{\partial u}{\partial x}}_{L^2(D)}^2.
\nonumber
\end{align}
% ipnote{$t_1=(t_1-t_0)/2$, thus $2/t_1$ is $4/(t_1-t_0)$.
% I also repeated the proof with: bounds on the reference square $(-1,1)^2$
% and scaling argument for $(x_0,x_1)\times(t_0,t_1)$; everything OK.}
\end{lemma}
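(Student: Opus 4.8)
The plan is to prove the first estimate in \eqref{eq:Trace} (the second one follows by literally exchanging the roles of $x$ and $t$) by reducing it to a one-dimensional trace inequality on the segment $(t_0,t_1)$ and then integrating in $x$. By a standard density argument it suffices to treat $u\in C^\infty(\overline D)$; alternatively one uses that for a.e.\ $x\in(x_0,x_1)$ the section $t\mapsto u(x,t)$ lies in $H^1(t_0,t_1)$ and hence has an absolutely continuous representative. Fixing such an $x$, I would set $g(t):=u(x,t)$ and aim to bound $g(t_0)^2+g(t_1)^2$ by $\int_{t_0}^{t_1}g^2$ and $\int_{t_0}^{t_1}(g')^2$.

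The key step is to use the two complementary linear weights $t-t_0$ and $t_1-t$: integrating $\partial_t\big((t-t_0)g^2\big)$ and $\partial_t\big((t_1-t)g^2\big)$ over $(t_0,t_1)$ gives
\begin{align*}
g(t_1)^2&=\frac1{t_1-t_0}\int_{t_0}^{t_1}\big(g^2+2(t-t_0)\,g\,g'\big)\di t,\\
g(t_0)^2&=\frac1{t_1-t_0}\int_{t_0}^{t_1}\big(g^2-2(t_1-t)\,g\,g'\big)\di t.
\end{align*}
Adding these and using $\big|(t-t_0)-(t_1-t)\big|=|2t-t_0-t_1|\le t_1-t_0$ on $(t_0,t_1)$ yields
\[
g(t_0)^2+g(t_1)^2\le\frac2{t_1-t_0}\int_{t_0}^{t_1}g^2\di t+2\int_{t_0}^{t_1}|g|\,|g'|\di t .
\]

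Next I would apply Young's inequality in the weighted form $2|g|\,|g'|\le\frac2{t_1-t_0}\,g^2+\frac{t_1-t_0}2\,(g')^2$ to the last integral; this is the step where the precise constants $4/(t_1-t_0)$ and $(t_1-t_0)/2$ are produced, and it gives
\[
g(t_0)^2+g(t_1)^2\le\frac4{t_1-t_0}\int_{t_0}^{t_1}g^2\di t+\frac{t_1-t_0}2\int_{t_0}^{t_1}(g')^2\di t .
\]
Finally, recalling $g(t)=u(x,t)$ and integrating this inequality over $x\in(x_0,x_1)$, I would conclude by Fubini's theorem, since $\int_{x_0}^{x_1}\!\big(u(x,t_0)^2+u(x,t_1)^2\big)\di x=\N{u}_{L^2(\partial D^{\mathrm{SN}})}^2$ and the right-hand side becomes $\frac4{t_1-t_0}\N{u}_{L^2(D)}^2+\frac{t_1-t_0}2\N{\partial u/\partial t}_{L^2(D)}^2$. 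The inequality on $\partial D^{\mathrm{WE}}$ is obtained verbatim with $x$ and $t$ interchanged. There is no genuine obstacle here: the only points demanding a little care are the sign bookkeeping in the two weight identities and choosing the parameter in Young's inequality so that the two constants come out exactly as stated.
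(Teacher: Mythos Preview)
Your proof is correct and is essentially the same as the paper's. The paper first translates $D$ so that it is centred at the origin and then integrates $\partial_t(t\,u^2)$ over $D$; your two complementary weights $(t-t_0)$ and $(t_1-t)$ combine, after addition, to the single centred weight $2t-t_0-t_1$, which is exactly the paper's weight up to the translation, and from that point on both arguments proceed identically via Cauchy--Schwarz/Young with the same parameter.
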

\begin{proof}
We assume without loss of generality that $D$ is centred at the origin, i.e.\ $x_0=-x_1$ and $t_0=-t_1$.
%$D=(-x_1,x_1)\times(-t_1,t_1)$.
The first bound is a simple consequence of the fundamental theorem of calculus:
\begin{align*}
\N{u}_{L^2(\partial D^{\mathrm{SN}})}^2&=
\int_{\{-t_1,t_1\}\times (-x_1,x_1)} u^2\di x \\
&=\frac1{t_1}\iint_D \frac{\partial (t u^2)}{\partial t}\di x\di t\\
&=\frac1{t_1}\iint_D \Big(u^2 + 2t u \frac{\partial u}{\partial t}\Big)\di x\di t\\
&\le \frac1{t_1} \N{u}_{L^2(D)}^2  + 2 \N{u}_{L^2(D)}\N{\frac{\partial u}{\partial t}}_{L^2(D)}%\\&
\le \frac2{t_1} \N{u}_{L^2(D)}^2  + t_1\N{\frac{\partial u}{\partial t}}_{L^2(D)}^2.
\end{align*}
The first inequality in \eqref{eq:Trace} follows from $t_1=(t_1-t_0)/2$; the second bound is derived in a similar way.
\end{proof}

\begin{theorem}\label{th:Convergence}
For all $K\in\calT_h$, fix $p_K,s_K\in\IN$ with $1\le s_K\le p_K$ and assume that the restriction to $K$ of the solution $(E,H)$ of the initial boundary value problem~\eqref{eq:IVP} with $J=0$ belongs to 
$W^{s_k+1,\infty}(K)$. %$C^{p_K+1}(\overline K)$.
Define the discrete space $\bV_p\Th\subset\bT\Th$
% to contain the functions whose restriction to any mesh element $K$
% are the polynomials described in \eqref{eq:localpolyn} (with $p=p_K$) 
as in~\eqref{eq:VpTh},
and let $(\Ehp,\Hhp)$ be the solution of the corresponding Trefftz-DG variational formulation \eqref{eq:VFTDG}.
% with the discrete space $\bV_p\Th$ locally described by the polynomials in \eqref{eq:localpolyn}
Then, the following bound holds true:
\begin{align}\nonumber
&\Tnorm{(E,H)-(\Ehp,\Hhp)}_{DG}\\
&\le
 \frac{12}{\sqrt c}\sum_{K\in\calT_h}
\bigg(6\Big(c+\frac{h_K^x}{h_K^t}\Big)+8\zeta_K \Big(1+c \frac{h_K^t}{h_K^x}\Big) \bigg)^{1/2}
%\bigg(\frac{(p_K-s_K)!}{(p_K+s_K)!}\bigg)^{1/2}
{(e/2)^{\frac{s_K^2}{p_K}} \;\frac{\big(h_K^x+ch_K^t\big)^{s_K+\frac32}}{p_K^{s_K}}}
\nonumber\\
&\hspace{30mm}\cdot
\Big(\abs{\epsil^{1/2} E}_{W^{s_K+1,\infty}_c(K)}+\abs{\mu^{1/2} H}_{W^{s_K+1,\infty}_c(K)}\Big).
\label{eq:DGConvergence}
\end{align}
% \begin{align}\nonumber
% \Tnorm{(E,H)-(\Ehp,\Hhp)}_{DG}
% &\le 12\,\sqrt{2}\sum_{K\in\calT_h}
% %\left(2h_K^x+4\zeta_K h_K^t\right)^{1/2}
% \left(h_K^x+2\zeta_K h_K^t\right)^{1/2}
% \frac{1}{(p_K+1)!} \bigg(\frac{h_K^x+ch_K^t}4\bigg)^{p_K+1}\\
% &\hspace{20mm}\Big(\abs{\epsil^{1/2} E}_{W^{p_K+1,\infty}_c(K)}+\abs{\mu^{1/2}
%   H}_{W^{p_K+1,\infty}_c(K)}\Big).
% \label{eq:DGConvergence}
% \end{align}

If the bound \eqref{eq:DualStability} holds true for the solution of the auxiliary problem~\eqref{eq:dualIVP}, we also have the following bound in $L^2(Q)$:
\begin{align}\nonumber
&\Big(\N{\mu^{-1/2}(E-\Ehp)}_{L^2(Q)}^2
+\N{\epsil^{-1/2}(H-\Hhp)}_{L^2(Q)}^2\Big)^{1/2}
\\
&\le 
 \frac{12\sqrt{2}}{\sqrt c}\Cstab\sum_{K\in\calT_h}
\bigg(6\Big(c+\frac{h_K^x}{h_K^t}\Big)+8\zeta_K \Big(1+c \frac{h_K^t}{h_K^x}\Big) \bigg)^{1/2}
{(e/2)^{\frac{s_K^2}{p_K}} \;\frac{\big(h_K^x+ch_K^t\big)^{s_K+\frac32}}{p_K^{s_K}}}
%bigg\bigg(\frac{(p_K-s_K)!}{(p_K+s_K)!}\bigg)^{1/2}\big(h_K^x+ch_K^t\big)^{s_K+\frac32}
\nonumber\\
&\hspace{30mm}\cdot
\Big(\abs{\epsil^{1/2} E}_{W^{s_K+1,\infty}_c(K)}+\abs{\mu^{1/2} H}_{W^{s_K+1,\infty}_c(K)}\Big).
\label{eq:L2Convergence}
\end{align}
% \begin{align}\nonumber
% &\Big(\N{\mu^{-1/2}(E-\Ehp)}_{L^2(Q)}^2
% +\N{\epsil^{-1/2}(H-\Hhp)}_{L^2(Q)}^2\Big)^{1/2}
% \\
% &\le 24 \,\Cstab\sum_{K\in\calT_h}
% \left(h_K^x+2\zeta_K h_K^t\right)^{1/2}
% \frac{1}{(p_K+1)!} \bigg(\frac{h_K^x+ch_K^t}4\bigg)^{p_K+1}
% \label{eq:L2Convergence}\\
% &\hspace{30mm}
% \Big(\abs{\epsil^{1/2} E}_{W^{p_K+1,\infty}_c(K)}+\abs{\mu^{1/2}H}_{W^{p_K+1,\infty}_c(K)}\Big),
% \nonumber
% \end{align}
with $\Cstab$ from \eqref{eq:DualStability}. 
% \amnote{Can we compute this bound \eqref{eq:L2Convergence} for the example in numerical case and plot values together with error plot?\\
% For uniform meshes it is function of $h_x,h_t,p$, making use of the derivatives of the Gaussian function.\\
% Decide after we have changed approx bounds, they are probably too high.\\
% YES: Cstab and 1st bracket from norm manipulation give a very large coefficient...}
\end{theorem}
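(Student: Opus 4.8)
The plan is to combine three ingredients already at hand: the quasi-optimality estimate of Theorem~\ref{th:QuasiOpt} (and Corollary~\ref{cor:L2} for the $L^2$ bound), the elementwise best-approximation estimate~\eqref{eq:FinalBestApproxNEW}, and the explicit trace inequality~\eqref{eq:Trace} just proved. The only genuinely new work is to pass from the skeleton DG${}^+$ norm to the volumetric weighted $H^1_c$ norms appearing in~\eqref{eq:FinalBestApproxNEW}, keeping all constants explicit.

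First I would start from $\Tnorm{(E,H)-(\Ehp,\Hhp)}_{DG}\le 3\inf_{(\vE,\vH)\in\bV_p\Th}\Tnorm{(E,H)-(\vE,\vH)}_{DG^+}$, and note that, since $\bV_p\Th$ decouples over elements into the local spaces $\bV_{p_K}(K)$, it suffices to pick on each $K$ the minimiser $(\vE^K,\vH^K)$ of~\eqref{eq:FinalBestApproxNEW} (whose hypotheses are met because $(E,H)$ solves~\eqref{eq:IVP} with $J=0$ and $\epsil^{1/2}E,\mu^{1/2}H\in W^{s_K+1,\infty}(K)\subset W^{s_K+1,\infty}_c(K)$, as $\epsil,\mu$ are constant on $K$). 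Each of the twelve terms making up $\Tnorm{\cdot}^2_{DG^+}$ in~\eqref{eq:DGNorms} is an $L^2$ norm over a subset of some $\partial K$ of a trace of $\epsil^{1/2}(E-\vE^K)$ or $\mu^{1/2}(H-\vH^K)$, possibly of a jump or an average; these are reduced to one-sided traces by $\|\jmp{\phi}\|^2\le 2\|\phi^-\|^2+2\|\phi^+\|^2$ and $\|\mvl{\phi}\|^2\le\tfrac12\|\phi_{K_1}\|^2+\tfrac12\|\phi_{K_2}\|^2$ (which also covers hanging nodes, the relevant faces being subsets of element edges), and the weights $\alpha,\beta,\alpha^{-1},\beta^{-1}$ are absorbed into $\zeta_K$ via~\eqref{eq:zeta}, leaving only $\epsil,\mu$ weights.

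Next, to each trace term I would apply the matching line of~\eqref{eq:Trace} ($\partial K^{\mathrm{SN}}$ for traces on horizontal edges, $\partial K^{\mathrm{WE}}$ for traces on vertical edges), write $\partial_t=c\,D_c^{(0,1)}$ so that $\|\partial_t u\|_{L^2(K)}\le c\,\abs{u}_{H^1_c(K)}$, and use $h_K^x\le h_D$, $ch_K^t\le h_D$, $\tfrac1{h_K^t}\le\tfrac1{h_D}\bigl(c+\tfrac{h_K^x}{h_K^t}\bigr)$, $\tfrac1{h_K^x}\le\tfrac1{h_D}\bigl(1+c\tfrac{h_K^t}{h_K^x}\bigr)$, with $h_D:=h_K^x+ch_K^t$ being~\eqref{eq:hD} for $D=K$, to recognise the weighted $H^1_c(K)$ norm~\eqref{eq:H1cnorm}. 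Collecting the $2$'s and $4$'s from~\eqref{eq:Trace} and the $2$'s from jumps and averages over all the terms gives
\[
\Tnorm{(E,H)-(\vE,\vH)}^2_{DG^+}\le\sum_{K\in\calT_h}\Bigl(6\bigl(c+\tfrac{h_K^x}{h_K^t}\bigr)+8\zeta_K\bigl(1+c\tfrac{h_K^t}{h_K^x}\bigr)\Bigr)\Bigl(\N{\epsil^{1/2}(E-\vE^K)}^2_{H^1_c(K)}+\N{\mu^{1/2}(H-\vH^K)}^2_{H^1_c(K)}\Bigr).
\]
Inserting~\eqref{eq:FinalBestApproxNEW} (with $D=K$, contributing the factor $16/c$), using the Stirling-type estimate $\tfrac{(p-s)!}{(p+s)!}\le(e/2)^{2s^2/p}\,p^{-2s}$ for $1\le s\le p$, taking square roots, and bounding $(\sum_K a_K^2)^{1/2}\le\sum_K a_K$ and $(X^2+Y^2)^{1/2}\le X+Y$ yields~\eqref{eq:DGConvergence} with constant $3\sqrt{16/c}=12/\sqrt c$. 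For~\eqref{eq:L2Convergence} the same chain is run from Corollary~\ref{cor:L2}, which merely replaces the factor $3$ by $3\sqrt2\,\Cstab$.

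The main obstacle I expect is precisely the bookkeeping in the second and third paragraphs: routing each of the twelve terms of $\Tnorm{\cdot}^2_{DG^+}$ to the correct side of~\eqref{eq:Trace}, propagating the $\zeta_K$ weights correctly through the $\alpha$- and $\beta$-weighted terms, and verifying that the accumulated numerical constants do not exceed the stated $6$ and $8$; the Stirling estimate and the final summation over elements are routine by comparison.
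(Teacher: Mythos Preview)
Your proposal is correct and follows essentially the same route as the paper: bound the $DG^+$ norm elementwise by the weighted $H^1_c(K)$ norm via the trace inequality~\eqref{eq:Trace} and the definition of $\zeta_K$, then chain quasi-optimality~\eqref{eq:QuasiOpt} with the local approximation bound~\eqref{eq:FinalBestApproxNEW} and the Stirling-type estimate on $(p-s)!/(p+s)!$; the $L^2$ bound is obtained identically via Corollary~\ref{cor:L2}. The only cosmetic difference is that the paper first establishes the inequality $\Tnorm{(\vE,\vH)}_{DG^+}^2\le\sum_K(\ldots)(\N{\epsil^{1/2}\vE}^2_{H^1_c(K)}+\N{\mu^{1/2}\vH}^2_{H^1_c(K)})$ for a generic $(\vE,\vH)$ and then combines, whereas you weave the steps together; your explicit mention of the jump/average splitting and of hanging nodes is a detail the paper leaves implicit.
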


\begin{proof}
Given an element $K\in\calT_h$, we denote by $\partial K^{\mathrm N}$, $\partial
K^{\mathrm S}$, $\partial K^{\mathrm W}$, and $\partial K^{\mathrm E}$ its North, South, West and
East sides, respectively, North pointing in the positive time direction, 
 and set $\partial K^{\mathrm{WE}}:=\partial K^{\mathrm W}\cup \partial K^{\mathrm E}$.

For all $(\vE,\vH)\in H^1\Th^2$, expanding the DG${}^+$ norm
\eqref{eq:DGNorms}, using the definition \eqref{eq:zeta} of $\zeta_K$ and the trace inequalities \eqref{eq:Trace}, we have the bound
\begin{align*}
\Tnorm{(\vE,\vH)}_{DG^+}^2&\le\sum_{K\in\calT_h}\bigg(
\frac{1}{2}\N{\epsil^{1/2}\vE}_{L^2(\partial K^{\mathrm S})}^2
+\frac{1}{2}\N{\mu^{1/2}\vH}_{L^2(\partial K^{\mathrm S})}^2\\
&\hspace{15mm}
+\frac{3}{2}\N{\epsil^{1/2}\vE}_{L^2(\partial K^{\mathrm N})}^2
+\frac{3}{2}\N{\mu^{1/2}\vH}_{L^2(\partial K^{\mathrm N})}^2\\
&\hspace{15mm}+\N{\alpha^{1/2}\vE}_{L^2(\partial K^{\mathrm{WE}})}^2
+\N{\beta^{1/2}\vH}_{L^2(\partial
  K^{\mathrm{WE}}\setminus(\calF_h^{L}\cup\calF_h^R))}^2\\
&\hspace{15mm}+\N{\beta^{-1/2}\vE}_{L^2(\partial
  K^{\mathrm{WE}}\setminus(\calF_h^{L}\cup\calF_h^R))}^2
+\N{\alpha^{-1/2}\vH}_{L^2(\partial K^{\mathrm{WE}})}^2
\bigg) \\
&\hspace{-10mm}\overset{\eqref{eq:Trace}}\le\sum_{K\in\calT_h}
%\bigg(\frac3{2h_K^t}+\frac{2\zeta_K}{h_K^x}\bigg)
\bigg(\frac{6}{h_K^t}+\frac{8\zeta_K}{h_K^x}\bigg)
\Big(\N{\epsil^{1/2}\vE}^2_{L^2(K)}+\N{\mu^{1/2}\vH}^2_{L^2(K)}\Big)\\
&\qquad
+\frac{3h_K^t}4\bigg(\N{\derp{(\epsil^{1/2}\vE)}t}^2_{L^2(K)}
+\N{\derp{(\mu^{1/2}\vH)}t}^2_{L^2(K)}\bigg)\\
&\qquad
+\zeta_K h_K^x\bigg(\N{\derp{(\epsil^{1/2}\vE)}x}^2_{L^2(K)}
+\N{\derp{(\mu^{1/2}\vH)}x}^2_{L^2(K)}\bigg)
\\
&\hspace{-10mm}\overset{\eqref{eq:H1cnorm}}\le\sum_{K\in\calT_h}
%  \max\bigg\{\frac32\big(c+h_K^x/h_K^t\big)+2\zeta_K \big(1+c h_K^t/h_K^x\big) 
%  ,\; \frac34 c, \; \zeta_K\bigg\}\\
\bigg(6\Big(c+\frac{h_K^x}{h_K^t}\Big)+8\zeta_K \Big(1+c \frac{h_K^t}{h_K^x}\Big) \bigg)
% &\qquad\cdot\sum_{K\in\calT_h}\Bigg(
% \frac{ \N{\epsil^{1/2}\vE}^2_{L^2(K)}+\N{\mu^{1/2}\vH}^2_{L^2(K)} }{ h_K^x+ch_K^t }\\
% &\qquad\qquad+
% ( h_K^x+ch_K^t)\Big(\abs{\epsil^{1/2}\vE}^2_{H^1_c(K)}+\abs{\mu^{1/2}\vH}^2_{H^1_c(K)}\Big)\Bigg)\\
%&\qquad\cdot
\bigg(\N{\epsil^{1/2}\vE}^2_{H^1_c(K)}+\N{\mu^{1/2}\vH}^2_{H^1_c(K)}\bigg).
\end{align*}
Combining this with the quasi-optimality and the approximation results gives the error bound:
%the first assertion:
\begin{align*}
&\Tnorm{(E,H)-(\Ehp,\Hhp)}_{DG} %\\&
\overset{\eqref{eq:QuasiOpt}}\le  3 \inf_{(\vE,\vH)\in\bV_p\Th}
\Tnorm{(E,H)-(\vE,\vH)}_{DG^+}\\
&\:\le 3\sum_{K\in\calT_h}
\bigg(6\Big(c+\frac{h_K^x}{h_K^t}\Big)+8\zeta_K \Big(1+c \frac{h_K^t}{h_K^x}\Big) \bigg)^{1/2}
\\
&\hspace{30mm} \cdot\inf_{\substack{(\vE,\vH)\\\in\bV_p(K)}}
\Big(\N{\epsil^{1/2}(E-\vE)}_{H^1_c(K)}^2+\N{\mu^{1/2}(H-\vH)}_{H^1_c(K)}^2\Big)^{1/2}
\\
&\overset{\eqref{eq:FinalBestApproxNEW}}\le \frac{12}{\sqrt c}\sum_{K\in\calT_h}
\bigg(\bigg(6\Big(c+\frac{h_K^x}{h_K^t}\Big)+8\zeta_K \Big(1+c \frac{h_K^t}{h_K^x}\Big) \bigg)^{1/2}
\bigg(\frac{(p_K-s_K)!}{(p_K+s_K)!}\bigg)^{1/2}\big(h_K^x+ch_K^t\big)^{s_K+\frac32}\\
&\hspace{30mm}
\cdot\Big(\abs{\epsil^{1/2} E}_{W^{s_K+1,\infty}_c(K)}
+\abs{\mu^{1/2} H}_{W^{s_K+1,\infty}_c(K)}\Big).
\end{align*}
The bound \eqref{eq:DGConvergence} in the assertion follows by applying to the factorial terms the upper and lower Stirling inequalities in the form of \cite[Corollary~3.3]{Batir08}: for all $s\le p\in \IN$
\begin{align*}
\frac{(p-s)!}{(p+s)!}
&\le\underbrace{\Big(\frac{p-s+e^2/2\pi-1}{p+s+1/6}\Big)^{1/2}}_{\le1, \; (s\ge1)}
\frac{(p-s)^{p-s}}{(p+s)^{p+s}} e^{2s}%\\&
\le\bigg(\frac{(1-s/p)^{\frac ps-1}}{(1+s/p)^{\frac ps+1}}\;\frac{e^2}{p^2}\bigg)^{s}
\le \frac{(e/2)^{2s^2/p}}{p^{2s}},
\end{align*}
where the last inequality follows noting that $f(x)=(1-x)^{\frac1x-1}(1+x)^{-\frac1x-1}4^xe^{2-2x}\le1$ for all $0<x<1$
(which in turn can be verified by checking the convexity of $\log f$ and its limit values for $x\to0$ and $1$).

The estimate in $L^2(Q)$ norm follows from combining this bound with Corollary~\ref{cor:L2}.
\end{proof}

% In Theorem \ref{th:Convergence} we have assumed local regularity of
% the solution, i.e.\ $(E,H)\in C^{p_K+1}(\overline K)$ on each element
% $K\in\calT_h$.
% If all $p_K$ are equal to a certain $p\in\IN_0$, in the case $E_L=E_R=0$,
% this assumption is guaranteed
% by the $C^{p+1}(\overline\Omega)$ regularity of the initial data and the vanishing of the
% \red{even (odd) derivatives of $E_0$ ($H_0$, respectively) of order up
%   to $p+1$ in $x_L$ and $x_R$.}
%  %...first $p+1$ derivatives of $(\epsil^{1/2}E_0+\mu^{1/2}H_0)$ in $x_L$ and
%  %of $(\epsil^{1/2}E_0-\mu^{1/2}H_0)$ in $x_R$.
% \footnote{\red{Old version was only for Robin! Is this correct?
%     ip{Devo ancora controllare.}}}

\begin{rem}
The constant in the first brackets at the right-hand side of the bound \eqref{eq:DGConvergence} controls the loss of accuracy due to anisotropic elements.
If all mesh elements $K$ satisfy $h_x=ch_t$, then the constants reduces to $2(3c+4\zeta_K)^{1/2}$.
\end{rem}
% am{
% \begin{rem}
% In each mesh element, the order of convergence in $p$ of the Trefftz-DG method is dictated by the factorial ratio $(p-s)!/(p+s)!$.
% Using Stirling inequality, this can be bounded as 
% $$\frac{(p-s)!}{(p+s)!}
% \le \frac{e}{\sqrt{2\pi}}\Big(\frac{p-s}{p+s}\Big)^{p-s+1/2}\Big(\frac e{p+s}\Big)^{2s}
% \le \Big(\frac e{p+s}\Big)^{2s}.
% $$
% \end{rem}
% }
% amnote{Various bounds here.\\
% ---On page 72, Schwab bounds it with $((e/2)^{s/p}/p)^{2s}=(e/2)^{4s^2/p} p^{-2s}$ using Stirling, which is a very good bound in Matlab, but I can't re-prove it now (logarithms? convexity? product of terms in factorial?).\\
% ---Simpler bound from Stirling like $\frac{e}{\sqrt{2\pi}}(e/(p+s))^{2s}$ is not good.\\
% ---$\frac{e}{\sqrt{2\pi}}(\frac{p-s}{p+s})^{p-s+1/2}(e/(p+s))^{2s}$ is immediate from Stirling (ie best possible) but not the simplest to read and put in formulas.
% \\
% If we decide a good one (eg proving the Schwab one) to use we can put it directly in the theorem.
% }

% For a sequence of quasi-uniform, shape-regular meshes (i.e.\ with $h_x\sim ch_t$), with constant degree $p_K=p$, the $L^2$ norm of the Trefftz-DG error converges in the meshwidth with order $p+1$
% (note that in  \eqref{eq:Cdual} $\Cstab\sim h_x^{-1/2}$, in this case).
{From bound \eqref{eq:L2Convergence} we see that, if we fix a constant polynomial degree $p_K=p$, consider a sequence of quasi-uniform meshes with $h_x\sim ch_t$ and discretise an initial boundary value problem with constant coefficients and whose solution is sufficiently smooth, then the $L^2$ norm of the Trefftz-DG error converges in the meshwidth with algebraic order equal to $p+1$ (note that in this case $\Cstab\sim h_x^{-1/2}$ in \eqref{eq:Cdual}).
Theorem~5.2 of \cite{MoRi05} (see also Remark~5.2 therein) gives the slightly lower order of convergence ${p+1/2}$ for full polynomial spaces (thus with higher dimension at the same polynomial degree) in a much more general context (higher space dimensions, more general hyperbolic systems, non tensor-product meshes).
The numerical experiments in \cite[section~7]{MoRi05} (confirmed by
those in section~\ref{s:numer-exper} below) recover the higher order
$p+1$.}

The estimates of Theorem~\ref{th:Convergence} suggest to refine the space--time mesh and
reduce the local polynomial degree in the elements where the solution
has lower regularity.
The mesh refinement might be determined a priori knowing the locations
of possible singularities in (the derivatives of) the initial data,
discontinuities of the material parameters and non-matching of (the
derivatives of) the initial and boundary conditions, and
propagating them into $Q$ along the characteristics.

% \begin{rem}
% am{Due to the use of the the Chebyshev interpolation error bound \eqref{eq:ChebApprox}, the estimates of Theorem~\ref{th:Convergence} tie the (local) polynomial degree of the approximating spaces to the (local) regularity of the solution of the initial value problem.
% Thus, for solution with bounded Sobolev regularity (i.e.\ $(E,H)\in W_c^{p_K+1,\infty}(K)^2\setminus W_c^{p_K+2,\infty}(K)^2$ for some $p_K\in \IN$) the estimates \eqref{eq:DGConvergence} and \eqref{eq:L2Convergence} do not guarantee the convergence of the $p$-refinement on fixed meshes.
% However, classical $hp$-approximation estimates 
% $$ \inf_{P\in\IP^p([a,b])}\N{u-P}_{H^1(a,b)}\le 
% C\, h^{\min\{k,p\}} \, p^{-k}\,\N{u}_{H^{k+1}(a,b)}, \qquad k\ge0, $$
% lead to bounds in the form 
% $$\Big(\N{\mu^{-1/2}(E-\Ehp)}_{L^2(Q)}^2
% +\N{\epsil^{-1/2}(H-\Hhp)}_{L^2(Q)}^2\Big)^{1/2}\le.......$$
% In this case the bounding constants are not fully  explicit.
% }
% amnote{1---complete details for bound. (Should work for Hk-norms, returning to Winfty-norms in 1D.)
% \\
% 2---Good ref for approx bound?
% [Theorem~2.1]{BAS94} is for FEM with no proof;
% Fourier techniques used in 2D in [Lemma~3.1]{BAS87p}; 
% Babuska--Gui I-II-III seem only for functions with specific singularities.\\
% Schwab (3.3.11)+(3.3.16)+(3.3.10)+(nearby) has explicit constant! $\N{\cdot}_{V_k^k}\le\N{\cdot}_{H^k} $
% If this is correct we probably should use this from the beginning!
% ------ and this rem will disappear
% }
% \end{rem}

\begin{rem}\label{rem:RobinConvRates}
For the Robin initial boundary value problem~\eqref{eq:RobinIVP}
described in section~\ref{ss:Robin}, an analogous convergence result to
Theorem~\ref{th:Convergence} holds.
After substituting the DG${}^\calR$ norm in place of the DG norm,
the bounds \eqref{eq:DGConvergence} and \eqref{eq:L2Convergence} hold
with a factor $(1+C_c^\calR)/3$ multiplied to the right-hand side, with $C_c^\calR$ as in \eqref{eq:RobinCc} (due to the different quasi-optimality bounds in \eqref{eq:QuasiOpt} and \eqref{eq:RobinQuasiOpt}), and with $\zeta_K$ substituted by
\begin{align*}
\zeta_K^\calR:=\max\bigg\{
&\N{\alpha\epsil^{-1}}_{L^\infty(\partial K\cap\Fver)}; &&
\N{\alpha^{-1}\mu^{-1}}_{L^\infty(\partial K\cap\Fver)};\\
&
\N{\beta\mu^{-1}}_{L^\infty(\partial K\cap\Fver)};&&
\N{\beta^{-1}\epsil^{-1}}_{L^\infty(\partial K\cap\Fver)};\\
&
\N{(1-\delta)(\epsil\mu)^{-1/2}}_{L^\infty(\partial K\cap(\FL\cup\FR))};&&
\N{\delta(\epsil\mu)^{-1/2}}_{L^\infty(\partial K\cap(\FL\cup\FR))}
\bigg\}.
\end{align*}
\end{rem}

\subsection{Exponential convergence for analytic solutions}

If the solution $(E,H)$ is analytic in a complex neighbourhood of
some mesh elements, for these elements one can replace the approximation bound
\eqref{eq:FinalBestApproxNEW} with the exponential one \eqref{eq:ExpApprox}
in the last step of the proof of Theorem~\ref{th:Convergence}. 
This suggests the design of suitable $hp$-mesh refinements which can provide exponentially convergent Trefftz-DG discretisations.
%ipnote{I added this sentence and I commented the last paragraph of the subsection. If you don't like, no problem with me to restore the old version.}
% leave it red for now, don't like it much

%We discuss here 
In the rest of this section, we discuss sufficient conditions on the problem data such that 
analyticity of the solution is guaranteed in a complex neighbourhood of
all the elements in the mesh. 

We assume that the coefficients $\epsil$ and $\mu$ are
constant throughout $Q$, and, for simplicity, that the Dirichlet
boundary conditions are homogeneous, i.e.\ $E_L=E_R=0$.

We first note that the solution $(E,H)$ of the initial boundary value
problem \eqref{eq:IVP} 
%with homogeneous Dirichlet boundary conditions $E_L=E_R=0$ 
is the restriction of the solution $(\widetilde
E,\widetilde H)$ of the similar problem posed on
$\IR\times\IR^+$ with initial conditions $\widetilde E(\cdot,0)=\widetilde E_0$ and
$\widetilde H(\cdot,0)=\widetilde H_0$,
where $\widetilde E_0$ is the $2(x_R-x_L)$-periodic extensions of
$E_0$ odd around the point $x_L$, and $\widetilde H_0$ is the $2(x_R-x_L)$-periodic extensions of
$H_0$ even around the same point.

We now assume that 
\begin{equation*}%\label{eq:Strip}
\text{$\widetilde E_0$ and $\widetilde H_0$ are analytic
in the complex strip }
\calS_r:=\{z\in\IC,\; |\im z|<r \}
\end{equation*}
for some $r>0$.
As in \eqref{eq:EHuw} and \eqref{eq:leftright}, we decompose the
(extended) initial conditions in the components $\widetilde u_0:=\epsil^{1/2}\widetilde E_0+\mu^{1/2}\widetilde
H_0$ and $\widetilde w_0:=\epsil^{1/2}\widetilde E_0-\mu^{1/2}\widetilde
H_0$, which are also analytic in~$\calS_r$.
(What we actually need is only that $\widetilde u_0$ and $\widetilde w_0$
are analytic in a sufficiently large complex neighbourhood of the
\emph{finite} segments $\Omega_Q^-$ and $\Omega_Q^+$, respectively.)

% For every mesh element $K$ as above, we
% fix $h_K:=\mathrm{length}(\Omega_K^\pm)$.
% The complex ellipses with foci at the extrema of $\Omega_K^\pm$ and sum of the
% semiaxes equal to 
% \begin{equation}\label{eq:Rho}
% \rho_K:=2m/h_K+\sqrt{1+4 m^2/h_K^2}.
% \end{equation}
% \footnote{\red{rho $>$1??? strange...}}
% are contained in the strip $\calS_m$.
% So the exponential approximation bound \eqref{eq:ExpApprox} holds with
% $D$ chosen as $K$.

For every mesh element $K$ as above, we
fix $h_K:=\mathrm{length}(\Omega_K^\pm)$.
The complex ellipses with foci at the extrema of $\Omega_K^\pm$ and sum of the
semiaxes equal to 
$r+\sqrt{r^2+h_K^2/4}$
are contained in the strip $\calS_r$.
So the exponential approximation bound \eqref{eq:ExpApprox} holds with $D$ chosen as $K$ and exponential rate
\begin{equation}\label{eq:Rho}
\rho_K:=2r/h_K+\sqrt{1+4 r^2/h_K^2}>1.
\end{equation}

% and denote by $(x_K,t_K)$ its centre.
% The affine transformation $z\mapsto \hat
% z=2(z-x_K+ct_K)/h_K$ maps $\Omega_K^-$ into $(-1,1)$.
% The pullback
% $\hat u_0(\hat z):= \widetilde u_0(z)$ is analytic in the strip
% $\calS_{2m/h_K}$ and in particular in the complex ellipse with foci in
% $\pm1$ and minor semiaxis (in the imaginary direction) $2m/h_K$.

Proceeding as in the proof of Theorem \ref{th:Convergence}, we see
that the solution of the Trefftz-DG discretisation converges with
exponential rates: 
\begin{align}
\nonumber
\Tnorm{(E,H)-(\Ehp,\Hhp)}_{DG}&\le
3\,\sqrt2\sum_{K\in\calT_h}
\left(h_K^x+2\zeta_K h_K^t\right)^{1/2}
C_{K,\mathrm{Bern}} \,\rho_K^{-p_K},\\
\Big(\N{\mu^{-1/2}(E-\Ehp)}_{L^2(Q)}^2
&+\N{\epsil^{-1/2}(H-\Hhp)}_{L^2(Q)}^2\Big)^{1/2}
\label{eq:ExpConvergence}\\
&\le 6 \,\Cstab\sum_{K\in\calT_h}
\left(h_K^x+2\zeta_K h_K^t\right)^{1/2}
C_{K,\mathrm{Bern}}\, \rho_K^{-p_K},
\nonumber
\end{align}
where $C_{K,\mathrm{Bern}},\Cstab,\rho_K,\zeta_K$ were defined
in \eqref{eq:Bernstein}, \eqref{eq:DualStability}, \eqref{eq:Rho}, and
\eqref{eq:zeta} respectively.
In particular, since we have taken constant parameters, $\Cstab$
satisfies \eqref{eq:CdualAppendix} in appendix \ref{appendix}.

The bounds \eqref{eq:ExpConvergence} ensure that, under the regularity assumptions stipulated in this section, the Trefftz-DG method converges exponentially in the total number of degrees of freedom on any fixed mesh, when the polynomial degrees $p_K$ are increased uniformly.
On the contrary, standard space--time DG methods converge as negative exponentials of the \textit{square root} of the total number of degrees of freedom, as demonstrated e.g.\ in the numerical example in Figure~\ref{fig:ConvpVersion}.

\begin{rem}\label{rem:RobinAnalytic}
In the case of the Robin initial boundary value problem
\eqref{eq:RobinIVP}, convergence bounds similar to \eqref{eq:ExpConvergence} can be proved if
the functions
\begin{align*}
\widetilde u_0^\calR(x):=
\begin{cases}
g_L\big((x_L-x)/c\big) &\iin (x_L-cT,x_L],\\
\epsil^{1/2}E_0(x)+\mu^{1/2} H_0(x) &\iin (x_L,x_R),
\end{cases}\\
\widetilde w_0^\calR(x):=
\begin{cases}
\epsil^{1/2}E_0(x)-\mu^{1/2} H_0(x) &\iin (x_L,x_R),\\
g_R\big((x-x_R)/c\big) &\iin [x_R,x_R+cT)
\end{cases}
\end{align*}
can be extended analytically in $\calS_r$ (or in a sufficiently large complex
neighbourhood of their domains of definition).
Note that, not only the data $g_L,g_R,E_0$, and $H_0$ must be analytic,
but their derivatives also need to match appropriately at the points
$(x_L,0)$ and $(x_R,0)$.
\end{rem}
% In general, the  global analyticity of the solution assumed in this section does not hold, but the solution is only piecewise analytic. In this case, the local exponential approximation bounds \eqref{eq:ExpApprox} can still be employed, combining them with suitable $hp$-refined meshes.

\section{Numerical experiments}
\label{s:numer-exper}

In this section we present numerical results supporting the
theoretical findings of the preceding sections. In particular, we
present convergence orders for the $h$- and
$p$-versions obtained from a series of numerical experiments. These are compared for
the choice of a Trefftz basis and a non-Trefftz basis.
For the former we employ polynomials of the Trefftz space introduced
in \eqref{eq:localpolyn}, for the latter we choose a tensor product of
Legendre polynomials in the spatial
and temporal variables, respectively.

%In the last part of this section we will comment on the conditioning and stability of the used update matrices.

\subsection{Numerical method}
\label{sec:numerical-method}

For all numerical experiments we employ formulation~\eqref{eq:BilinTDG} with the flux stabilisation parameters in~\eqref{eq:Fluxes} chosen as $\alpha=\beta=1/2$ unless stated otherwise. 
In matrix form the resulting numerical scheme reads
\begin{equation}
\label{eq:NumericalSystem}
\mathbf{A} \mathbf{f}^{n+1} = \mathbf{R} \mathbf{f}^{n},
\end{equation}
where $\mathbf{f}^{n}$ is the vector of numerical degrees of freedom
at time step $n$, and $\mathbf{A}$ and $\mathbf{R}$ are assembled from terms of the
bilinear form \eqref{eq:BilinTDG} acting on $\mathbf{f}$ at step $n+1$ and $n$, respectively.

Following \eqref{eq:localpolyn} the Trefftz basis consists of transport polynomials.
As non-Trefftz basis functions we take
\begin{equation*} 
 v^{j_x,j_t}_{E,H} \left(x,t\right) = L_{j_x} (x) L_{j_t} (t),
\end{equation*}
where $L_j(x)$ denotes a Legendre polynomial of degree $j$. 
The polynomial degrees in space $j_x$ and time $j_t$ are independent
of one another and chosen such that $j_x+j_t \le j$.
The dimensions of the Trefftz and the full polynomial space are given
in section~\ref{ss:PolyTrefftz}.
The choice of complete polynomials is motivated by the observation
that for transport polynomials the degrees in the spatial and temporal
variable add up to $j$ as well, and by the fact that for DG schemes
complete polynomial spaces deliver similar accuracy as tensor product ones with less degrees of freedom,
%are more efficient than tensor product ones 
as demonstrated in \cite{CGH14}.

\subsection{Test problem}
\label{sec:test-problem}

In the following, we consider meshes composed by space--time squares of uniform sizes, partitioning the (1+1)-dimensional domain $Q = [0,60]^2$.
%amnote{Are we really using only squares in all experiments? \sms{We always started from meshes with squares and then refined in space and time concurrently. Fritz would have to give the definite confirmation of this but he will not be available for a week. I am rather sure though.}}
We enforce Dirichlet boundary conditions representing a perfect
electrical conductor (PEC), i.e.\ $E_L = E_R = 0$ in~\eqref{eq:IVP}.

As initial conditions we choose
\begin{equation*}
 \begin{pmatrix}E_0 \\ H_0  \end{pmatrix} \left( x \right)
 = \begin{pmatrix} 1\\1 \end{pmatrix} \exp \left( - \left( x - 10
   \right)^2 / 10 \right),
\end{equation*}
corresponding to a wave packet of Gaussian shape propagating in
positive direction in free space.
The wave is reflected once by the domain boundary.
We normalise to one the permittivity $\epsil$, the permeability $\mu$, and thus the speed of light $c$.
% amnote{Here I've corrected $\epsil_0,\mu_0,c_0\to\epsil,\mu,c$ and dropped ``free space'', for consistency with previous notation, is this correct? \sms{yes, it is}}

\subsection{Error convergence}
\label{sec:error-convergence}

We consider the relative error computed in the $L^2(Q)$ norm on the whole
space--time domain
\begin{equation}
  \label{eq:L2error}
   % \epsilon_Q = \left(\Big( \sum\limits_n
   %   \int_{I^n}\int_\Omega \big( (E,H)-(E_{hp}^n,H_{hp}^n) \big)^2 \di x \di t 
   %   \Big) \left/ \int_Q (E,H)^2 \di x \di t  \right. \right)^{1/2}.
  \epsilon_Q = \bigg(
    \iint_Q\big( (E-\Ehp)^2 +(H-\Hhp)^2 \big) \di x \di t 
    \bigg/ \iint_Q \big(E^2+H^2\big) \di x \di t    \bigg)^{1/2}.
% \smsnote{I included the sum over the time slabs to emphasize that the numerical solution over the entire space--time domain is not available unless it is explicitly stored. I don't advocate this form strongly though if you prefer the more compact formula.}
\end{equation}
We first consider convergence of the $p$-version for both
Trefftz and non-Trefftz basis functions.  The mesh step sizes are $h_x = h_t = 1$.
In Figure~\ref{fig:ConvpVersion} (left) we display the global relative error $\epsilon_Q$
in dependence of the polynomial degree $p$. In both cases spectral
convergence is observed.
However, the Trefftz space of order $p$ has a smaller dimension and from bound \eqref{eq:ExpConvergence} 
we expect the error to decrease exponentially in the number of degrees of freedom in the Trefftz case, but exponentially only in the square root of the number of degrees of freedom in the non-Trefftz case. 
This is observed in the middle and right panels of Figure~\ref{fig:ConvpVersion}.

\begin{figure}[htb]
\centering
\includegraphics[width=\textwidth, clip=true, trim=8mm 0 10mm 0]{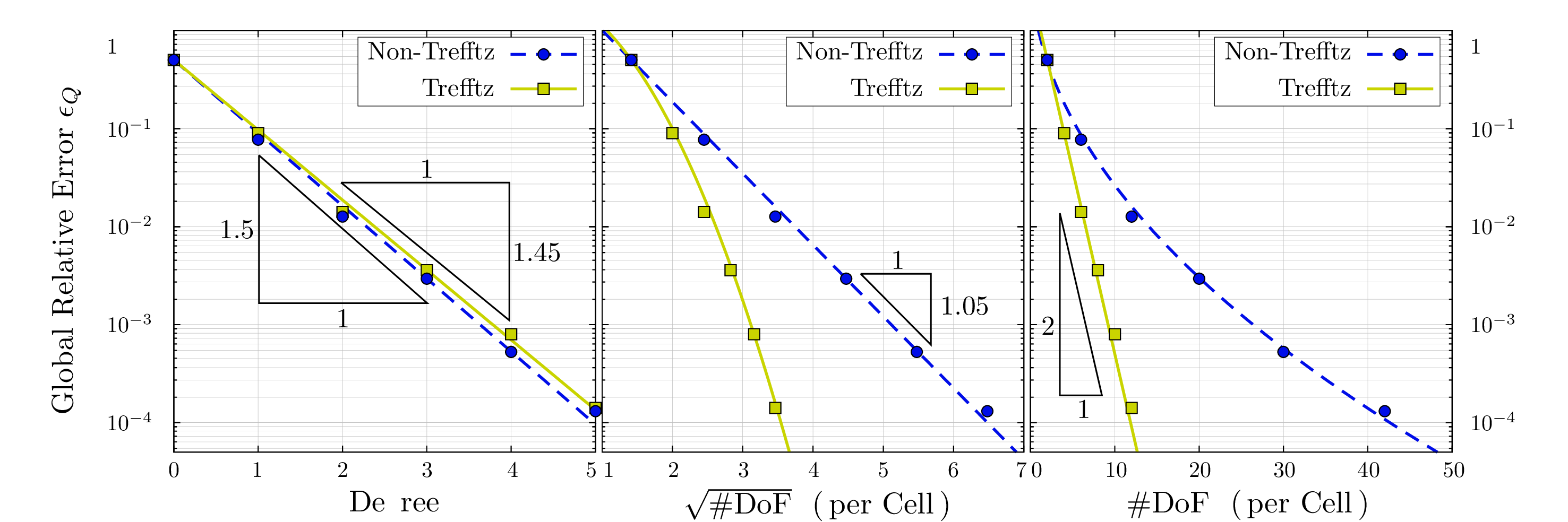}
\caption[fig:ConvpVersion]{Convergence of the $p$-version. Comparison of the
  global relative $L^2(Q)$ error~\eqref{eq:L2error} for Trefftz and non-Trefftz basis
  functions for the propagation of a %sufficiently 
  smooth wave packet in (1+1)-dimensional
  space against the polynomial degree (left), 
  the square root of the number of degrees of freedom per element (middle), and the number of degrees of freedom per element (right), respectively.
  %the square root of the number of DoF (middle), and the number of DoF (right), respectively.
  \label{fig:ConvpVersion}
  }
\end{figure}

Figure~\ref{fig:ConvhVersion} shows convergence of the
$h$-version for degree zero through three. Solid lines correspond to results obtained with the Trefftz basis whereas the dashed lines were obtained using the non-Trefftz basis. 
Uniform mesh step sizes are applied by reducing $h_x$ and $h_t$ simultaneously.
The Trefftz method exhibits optimal algebraic convergence rates $h^{p+1}$.
However, in the non-Trefftz case, the results seem to suggest an odd-even
pattern of the convergence rates, with convergence being suboptimal for odd degrees (by one order). 
Numerical odd-even effects in the convergence rates of DG methods
have also been reported, e.g.\ in~\cite[section~6.5]{Hesthaven2008},
although it has been shown in~\cite{RiviereGuzman} that in some
situations this might be a mesh effect, the convergence being always
suboptimal on particular meshes.

%\smsnote{Do NIPG results transfer to the discretisation of first order PDEs? I might have an intuition of how the loss of one order occurs for odd degrees but it applies to centred fluxes only. Please see the mail attachment. Be warned: I don't know if it this makes the slightest sense (and of course it lacks any mathematical rigour...).}

%   \amnote{are we using ``suboptimal'' with two different meaning in
%     this par? ip{I don't understand...The situations I am referring
%       to here are non symmetric IP methods for elliptic problems;
%       ``suboptimality'' means that instead of order $p+1$ for the
%       $L^2$-error, you have order $p$. Usually people observe this
%       suboptimality only for {\em even} degrees (and $p+1$ for odd degrees), while in
%       \cite{RiviereGuzman} they construct a particular configuration
%       where you have order $p$ for even and odd degrees.}
%  \sms{I think we are using suboptimal with one meaning only.}}

\begin{figure}[htb]
\centering
\includegraphics[width=0.9\textwidth]{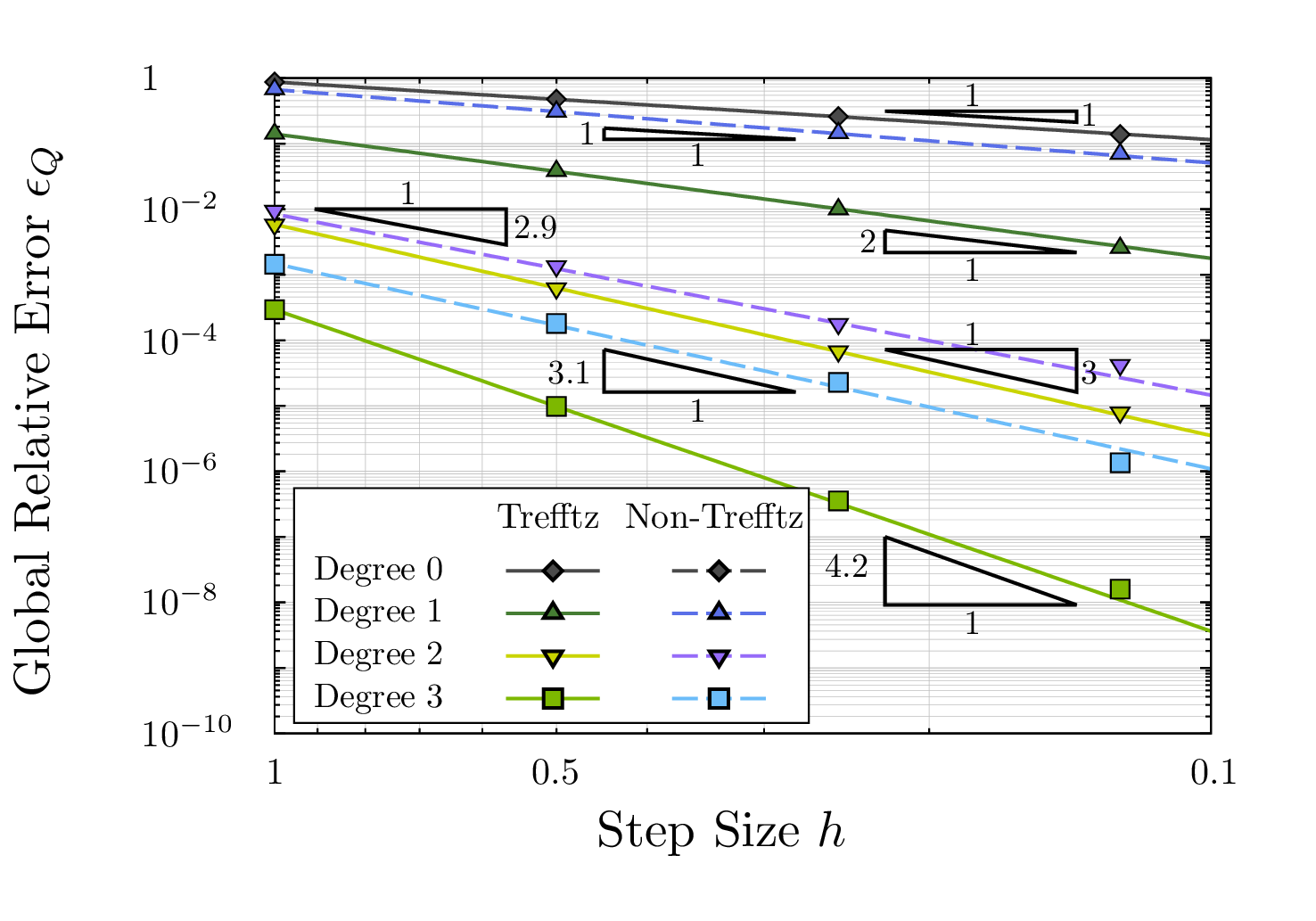}
\caption[fig:ConvhVersion]{Convergence of the $h$-version. Comparison of the
  global relative $L^2(Q)$ error~\eqref{eq:L2error} for Trefftz and non-Trefftz basis
  functions for the propagation of a %sufficiently 
  smooth wave packet in (1+1)-dimensional
  space vs.\ the mesh step size. The spatial and temporal step size
  $h_x$ and $h_t$ are decreased simultaneously. Optimal convergence is
  obtained with the Trefftz basis. 
  In the non-Trefftz case, suboptimal orders of convergence
  are observed for odd polynomial degrees.
  }\label{fig:ConvhVersion} 
\end{figure}

\subsection{Stability}
\label{sec:stability}

As the space--time Trefftz-DG method is implicit in time, a linear
system of equations has to be solved for advancing the solution in every time step. In this
regard, we investigated the conditioning of the Trefftz and
non-Trefftz system matrices. 
Figure~\ref{fig:conditioning} shows that the increase of the conditioning with the polynomial degree
in the Trefftz case is very mild, compared to the non-Trefftz case.
\begin{figure}[ht]
\centering
\includegraphics[width=0.6\textwidth]{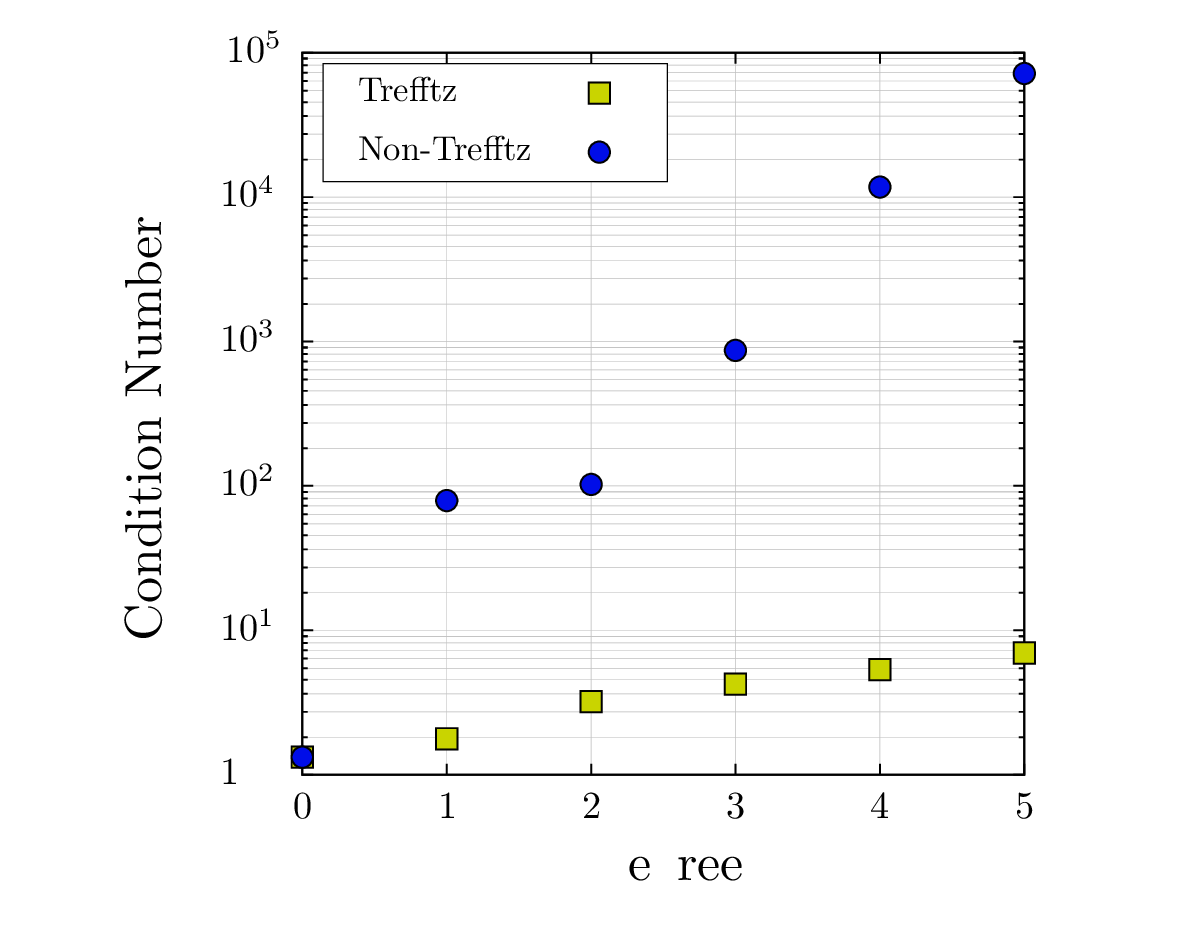}
\caption[fig:conditioning]{Comparison of the condition number of
  the update matrix in the Trefftz  and non-Trefftz case in dependence
  of the polynomial degree.}\label{fig:conditioning}
\end{figure}

The update matrix is obtained from~\eqref{eq:NumericalSystem} as $\mathbf{U}=\mathbf{A}^{-1} \,
\mathbf{R}$. Note that this matrix is usually not explicitly assembled but we
solve for the system~\eqref{eq:NumericalSystem}.
In Figure~\ref{fig:update} we show eigenvalues of the update matrices 
with Trefftz basis for degrees $p=0,\ldots,5$.
As expected from the stability analysis of section~\ref{ss:Energy}, all eigenvalues are on or within the unit circle. 
% By increasing the order from $p=0$
% to $p=5$ more eigenvalues appear on the unit circle, as is expected. 
Figure~\ref{fig:update} numerically confirms stability, and it shows
the method to be dissipative. 
% By increasing the order $p$, more
% eigenvalues approach the unit circle. This procedure
% results in less dissipation. 
%-SMS This is actually not true because also more EVs appear within
%-SMS the unit circle and we don't know which modes they correspond to.

%\blue{SMS: I'd like to comment more here but I cannot give any conclusive argumentation of what's going on. I am very sorry.}

\begin{figure}[ht]
\centering
\includegraphics[width=0.99\textwidth]{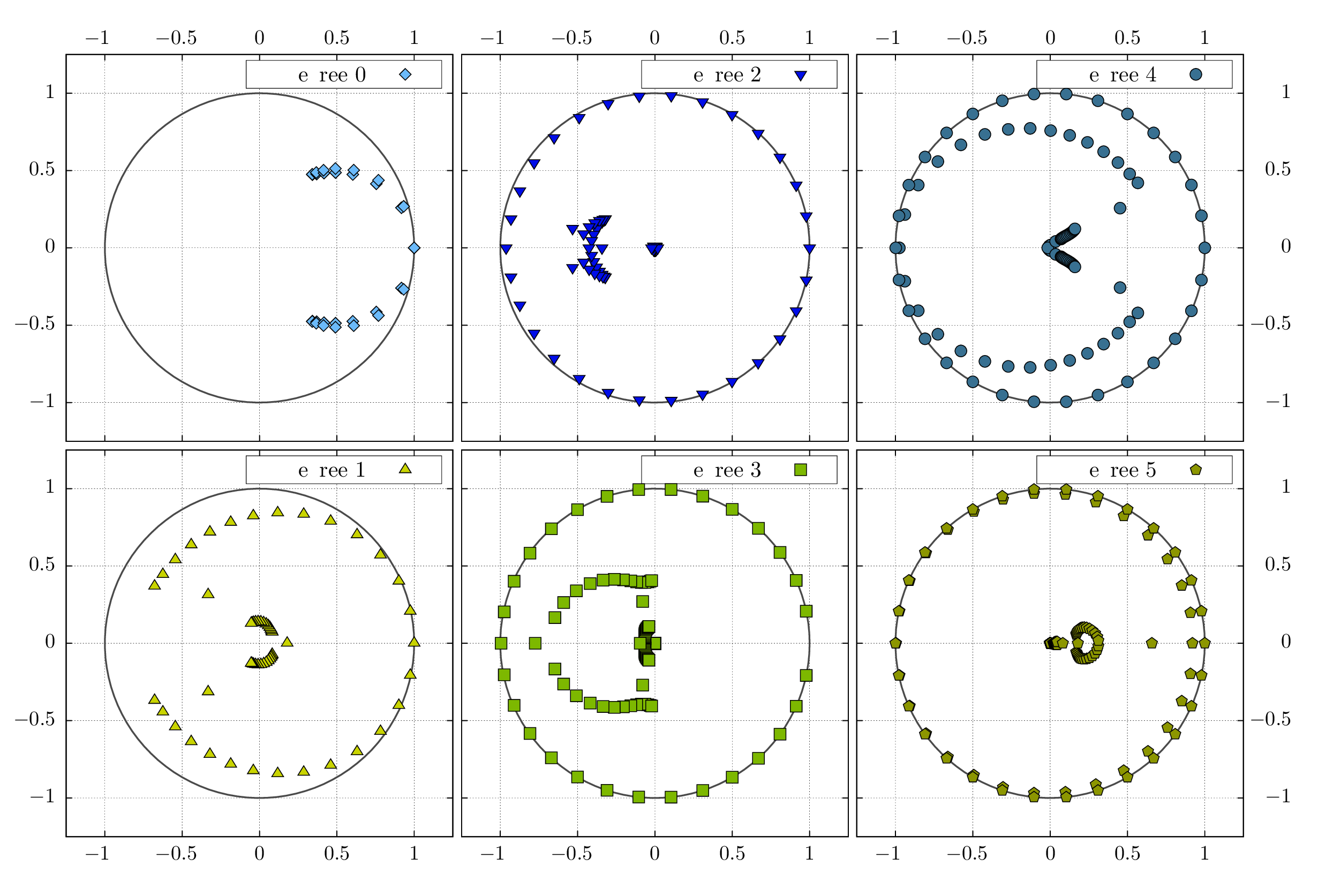}
\caption[fig:update]{Eigenvalues of update matrices with Trefftz basis and flux stabilisation parameters  $\alpha=\beta=0.5$. 
  The grey solid line is the unit circle in the complex plane.
}\label{fig:update}
\end{figure}

\subsection{Flux stabilisation parameters}
\label{sec:flux-stab-param}

Let us shortly comment on the choice of the flux stabilisation parameters $\alpha, \beta$.
The choice $\alpha=\beta=1/2$ corresponds to a full upwind
flux~\cite{MoRi05} and $\alpha=\beta=0$ (see \cite{KSTW2014} and also \cite{Lilienthal:2014fs})
corresponds to a centred flux. 
In order to determine the influence onto the numerical error, we varied
$\alpha, \beta$ in steps of 0.1 in the range $[0,1]$ and repeated the
above example maintaining the same regular mesh with $h_x=h_t=1$
  %\amnote{squares of side 1, correct? \sms{Fritz has to check which mesh he used for these simulations. It might be that he chose a more refined mesh (with square elements)}} 
  and polynomial degree $p=2$.
Figure~\ref{fig:alpha_beta} depicts the respective global relative error $\epsilon_Q$.
The minimum error was obtained for $\alpha=1/2$ and
$\beta=0$. 
However, the overall variation of the error is within a factor of two. %rather small.
%\amnote{What is max ratio between this values? I guessed ``within a factor of 2'' from plot, correct? --- is this clear? \sms{Fritz has the exact values if you'd like to include a more precise ratio. Otherwise ``around a factor of 2'' would be an alternative.}}
Similar results are obtained for different settings of the mesh step size and the polynomial degree. 
 \begin{figure}[!!ht]
 \centering
 \includegraphics[width=\textwidth, clip=true, trim=0 12mm 0 8mm]{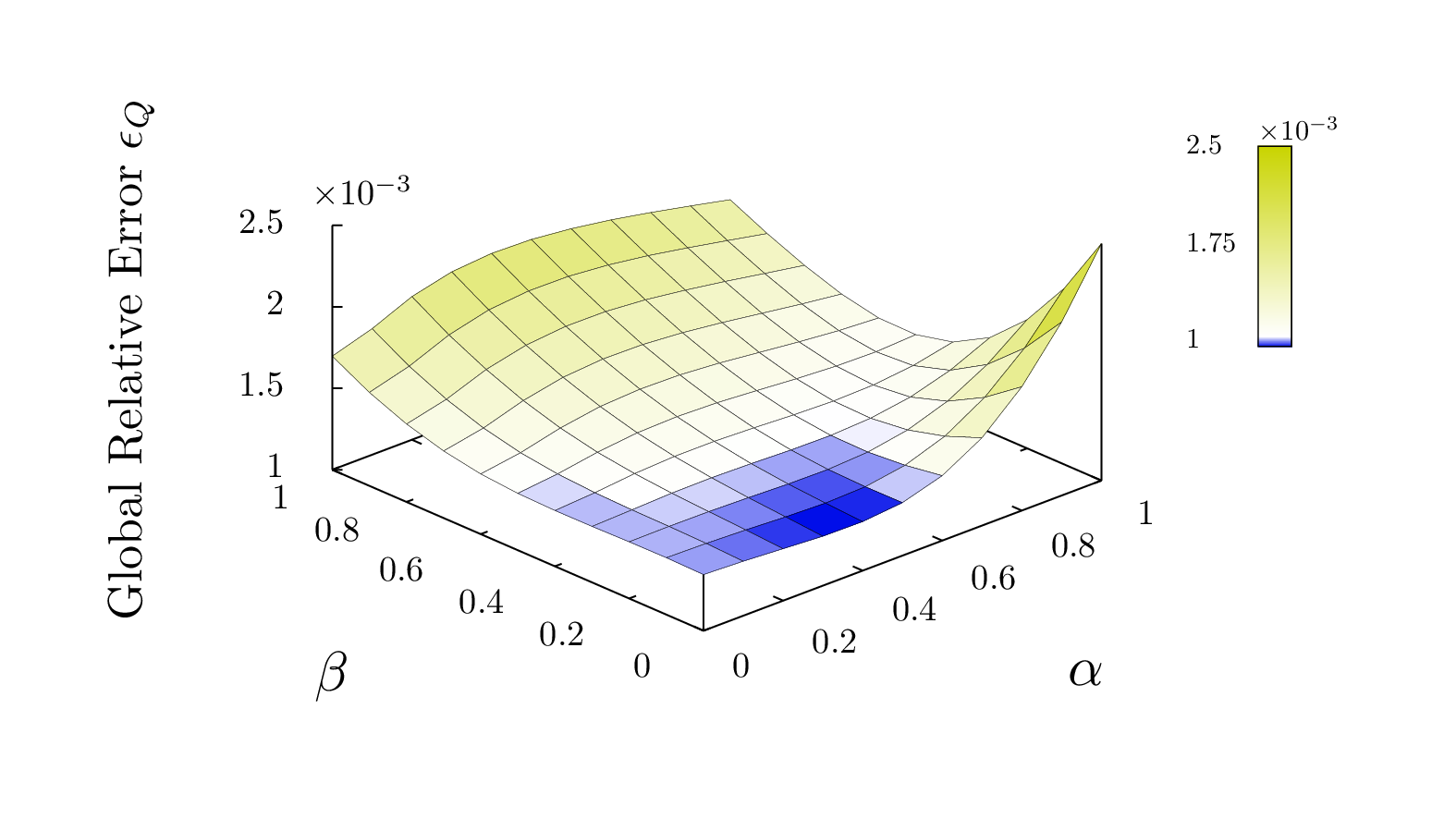}
 \caption[fig:alpha_beta]{Comparison of the global relative $L^2(Q)$
   error~\eqref{eq:L2error} for the setup described in section \ref{sec:test-problem}, under variation of the stabilisation parameters $\alpha$ and $\beta$ for a Trefftz basis
   of degree two (see section \ref{sec:flux-stab-param}).}\label{fig:alpha_beta}
 \end{figure}

\section{Conclusions and extensions}
\label{s:conclusions}

In this paper we have analysed a space--time Trefftz discontinuous Galerkin method for linear wave propagation problems.
The scheme corresponds to that proposed in \cite{KSTW2014}, with the addition of jump penalisation terms.
In one space dimension we proved that the formulation is well-posed and dissipative, and, for polynomial Trefftz trial spaces, we derived a priori $hp$-convergence bounds for its error in DG and $L^2$ norm.
Numerical examples show the viability of the scheme and confirm the orders of converge.

Several extensions of the analysis developed here can be envisaged.
%The extension to 
In higher space dimensions, the well-posedness and the
%of the Trefftz-DG formulation (see \cite{EKSTWtransparent}), of its
%well-posedness and 
abstract error analysis in DG norm are
straightforward. %\ipnote{Next sentence commented.}
%However, the stability argument used in Lemma~\ref{lem:DualStability}
%to control the $L^2$ norm of the error does not extend easily. 
%Similarly, 
In order to prove orders of convergence for the scheme in higher dimensions, new best approximation bounds for Trefftz spaces must be developed, as those derived in section~\ref{s:BestApprox} rely on the exact representation of the PDE solution in terms of left- and right-propagating waves, which is a one-dimensional result.

Other topics deserving further investigation are
the generalisation of the analysis to unstructured meshes (see
\cite{MoRi05,CostanzoHuang05}) and
% \ipnote{``the extension of the stability
%   result of Lemma~\ref{lem:DualStability} to discontinuous
%   coefficients; '' commented.}
%%%%the dispersion analysis of the Trefftz-DG scheme; 
%the extension of the stability result of Lemma~\ref{lem:DualStability} to discontinuous coefficients;
%%%%the extension of the stability analysis to the non-penalised case with $\alpha=\beta=0$;
the derivation of approximation estimates for non-polynomial Trefftz bases.
% adaptivity / good choice of basis in higher dims?
% varying coefficients?
In addition, it would be interesting to analyse the non-penalised case with $\alpha=\beta=0$, as well as other possible less dissipative (or non dissipative) variants.

\section*{Acknowledgements}
The authors are grateful to Lehel Banjai for fruitful discussions on the analysis of space--time Trefftz methods.
Ilaria Perugia acknowledges support of the Italian Ministry of Education, University and Research (MIUR) through the project PRIN-2012HBLYE4.
The work of Fritz Kretzschmar is supported by the `Excellence Initiative' of the German Federal and State Governments and the Graduate School of Computational Engineering at Technische Universit\"at Darmstadt.

\appendix
\section{Stability bound in the case of constant coefficients}\label{appendix}

In the special case of constant material parameters $\epsil$ and $\mu$, the stability
bound \eqref{eq:DualStability} can be derived differently from
Lemma~\ref{lem:DualStability}, namely using an exact representation
of the solution of~\eqref{eq:dualIVP}.
This results in a simpler %better 
constant $\Cstab$ with linear, as opposed to exponential, dependence on the total time $T$;
%with
no additional assumptions on the flux parameters
$\alpha$ and $\beta$ are required.
%We now show that the stability bound \eqref{eq:DualStability} holds
%true in the special case of constant $\epsil$ and $\mu$.
\begin{lemma}\label{lem:DualStabilityAppendix}
Assume that $\epsil$ and $\mu$ are constant in $\Omega$ (and thus in $Q$).
The solution $(\vE,\vH)$ of the initial auxiliary problem \eqref{eq:dualIVP} satisfies the stability bound 
\eqref{eq:DualStability} with 
\begin{equation}\label{eq:CdualAppendix}
\Cstab^2\le  4Tc\big(cN_{\mathrm{hor}} + \eta\gamma
N_{\mathrm{ver}}\big),
\end{equation}
where %we have defined the following parameters:
\begin{align*}
N_{\mathrm{hor}}:=&\;\#\big\{t, \text{ such that } (x,t)\in\Fhor\cup\FT 
\text{ for some }x_L<x<x_R\big\},\\
%&=\text{maximum number of elements counted in the time direction...,}\\
N_{\mathrm{ver}}:=&\;\#\big\{x, \text{ such that } (x,t)\in\Fver\cup\FL\cup\FR
\text{ for some }0<t<T\big\},\\
%\text{maximum number of elements counted in the space direction...,}\\
\eta:=&\lceil cT/(x_R-x_L)\rceil,\\
\gamma:=&\max\Big\{\N{(\beta\epsil)^{-1}+(\alpha\mu)^{-1}}_{L^\infty(\Fver)};\,
\N{(\alpha\mu)^{-1}}_{L^\infty(\FL\cup\FR)}\Big\}.
\end{align*}
\end{lemma}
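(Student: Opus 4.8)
The plan is to use that, for constant $\epsil$ and $\mu$, the solution of~\eqref{eq:dualIVP} has an explicit characteristic representation. First I would pass to the characteristic variables of~\eqref{eq:EHuw}: with $u:=\epsil^{1/2}\vE+\mu^{1/2}\vH$, $w:=\epsil^{1/2}\vE-\mu^{1/2}\vH$ and $\widetilde\phi:=\epsil^{1/2}\phi+\mu^{1/2}\psi$, $\widetilde\psi:=\epsil^{1/2}\phi-\mu^{1/2}\psi$, one multiplies the two equations in~\eqref{eq:dualIVP} by $\epsil^{1/2}$ and $\mu^{1/2}$ and adds, respectively subtracts, them (using $\epsil^{1/2}\mu=\mu^{1/2}/c$ and $\mu^{1/2}\epsil=\epsil^{1/2}/c$) to obtain the decoupled transport equations $\der ux+\frac1c\der ut=\widetilde\phi$ and $\der wx-\frac1c\der wt=\widetilde\psi$, with vanishing initial data and with $u+w=2\epsil^{1/2}\vE=0$ on $\FL\cup\FR$. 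Since $(\epsil^{1/2}\vE)^2+(\mu^{1/2}\vH)^2=\frac12(u^2+w^2)$, $\N{\epsil^{1/2}\phi}_{L^2(Q)}^2+\N{\mu^{1/2}\psi}_{L^2(Q)}^2=\frac12\big(\N{\widetilde\phi}_{L^2(Q)}^2+\N{\widetilde\psi}_{L^2(Q)}^2\big)$, and $\beta^{-1}\vE^2+\alpha^{-1}\vH^2\le\frac12\big((\beta\epsil)^{-1}+(\alpha\mu)^{-1}\big)(u^2+w^2)$ on $\Fver$ (and $\alpha^{-1}\vH^2\le\frac12(\alpha\mu)^{-1}(u^2+w^2)$ on $\FL\cup\FR$), proving~\eqref{eq:DualStability} with the claimed $\Cstab$ reduces to bounding $\N{u}_{L^2(\Fhor\cup\FT)}^2+\N{w}_{L^2(\Fhor\cup\FT)}^2$ and $\N{u}_{L^2(\Fver\cup\FL\cup\FR)}^2+\N{w}_{L^2(\Fver\cup\FL\cup\FR)}^2$ by $\N{\widetilde\phi}_{L^2(Q)}^2+\N{\widetilde\psi}_{L^2(Q)}^2$.

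Next I would record the exact solution. Assuming first, by a density argument as in the proof of Lemma~\ref{lem:DualStability}, that $\phi,\psi$ are continuous and compactly supported, the value $u(x,t)$ is obtained by following backwards in time the characteristic through $(x,t)$: this is a broken line of slope $\pm\frac1c$ that reflects off $x=x_L$ and $x=x_R$, each reflection changing sign and alternating between a rightward leg along which $\frac{\mathrm d}{\mathrm dt}u=c\widetilde\phi$ and a leftward leg along which $\frac{\mathrm d}{\mathrm dt}w=-c\widetilde\psi$; the contribution terminates at $t=0$, where $u=w=0$. Equivalently, unfolding the reflections by the folding map of $\IR$ onto $[x_L,x_R]$, $u(x,t)$ equals, up to sign, the integral of the corresponding source along a straight characteristic; since the wave speed is $c$ and $t\le T$, the number of reflections is at most $\eta=\lceil cT/(x_R-x_L)\rceil$. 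The same holds for $w$, with the roles of $\widetilde\phi$ and $\widetilde\psi$ interchanged.

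For the horizontal terms I would fix a time level $t_j$ and use Cauchy--Schwarz along the characteristic, $|u(x,t_j)|^2\le c^2t_j\int_0^{t_j}|g_x(s)|^2\di s$, where $g_x(s)$ equals $\widetilde\phi$ or $\widetilde\psi$ evaluated at the characteristic point at time $s$, according to the leg. Integrating over $x$ in the edges at level $t_j$ and exchanging integrals, for each fixed $s$ the map sending $x$ to the pair (characteristic position, leg type) is piecewise isometric and injective — the sign/leg alternation at reflections keeps the two preimages of a fold point on different legs — so $\int|g_x(s)|^2\di x\le\N{\widetilde\phi(\cdot,s)}_{L^2(x_L,x_R)}^2+\N{\widetilde\psi(\cdot,s)}_{L^2(x_L,x_R)}^2$; hence one time level contributes at most $c^2T\big(\N{\widetilde\phi}_{L^2(Q)}^2+\N{\widetilde\psi}_{L^2(Q)}^2\big)$, and summing over the $N_{\mathrm{hor}}$ levels (and over $u$ and $w$) controls the horizontal part of~\eqref{eq:DualStability}. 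For the vertical terms I would instead fix a vertical position $x_i$, bound $|u(x_i,t)|^2\le c^2T\int_0^T|g_t(s)|^2\di s$, integrate over $t\in(0,T)$ and exchange integrals; now for fixed $s$ the map $t\mapsto$ characteristic position has derivative of modulus $c$ and, since its unfolded range has length $\le cT$, multiplicity at most $\eta$, so $\int_0^T|g_t(s)|^2\di t\le\frac\eta c\big(\N{\widetilde\phi(\cdot,s)}_{L^2(x_L,x_R)}^2+\N{\widetilde\psi(\cdot,s)}_{L^2(x_L,x_R)}^2\big)$, whence one vertical position contributes at most $c\,\eta\,T\big(\N{\widetilde\phi}_{L^2(Q)}^2+\N{\widetilde\psi}_{L^2(Q)}^2\big)$. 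Summing over the $N_{\mathrm{ver}}$ positions, reinserting the weights $\beta^{-1/2},\alpha^{-1/2}$ through the definition of $\gamma$, and translating the norms back into the $\epsil,\mu$-weighted ones, one is led to $\Cstab^2\le4Tc\big(cN_{\mathrm{hor}}+\eta\gamma N_{\mathrm{ver}}\big)$; the general case $(\phi,\psi)\in L^2(Q)^2$ then follows by density.

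The step I expect to be the main obstacle is making the reflection/unfolding change of variables fully rigorous while keeping track of the constants: one has to write down the folding map together with the alternating sign, verify that for fixed $s$ it is essentially injective on a horizontal edge set but may have multiplicity up to $\eta$ on a vertical line, and arrange the Cauchy--Schwarz and change-of-variables constants so that they combine into precisely~\eqref{eq:CdualAppendix}. A minor additional technicality is justifying the explicit characteristic representation for merely $L^2$ data, which is handled by the density reduction mentioned above.
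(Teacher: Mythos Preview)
Your approach is correct and essentially the same as the paper's: pass to the characteristic variables $u,w$, use the explicit transport representation, apply Cauchy--Schwarz along characteristics, and count how many copies of $Q$ the domain of dependence covers. The only notable difference is packaging: where you describe broken characteristics reflecting at $x_L,x_R$ and then mention unfolding, the paper goes directly to the unfolded picture by extending $\vE,\vH,\phi,\psi$ to $\IR\times\IR^+$ as $2(x_R-x_L)$-periodic functions, odd/even about $x_L$ according to the Dirichlet condition; this makes the characteristics straight, yields the one-line representation $u(x,t)=\int_0^t cf(x+c(s-t),s)\di s$, and turns your injectivity/multiplicity bookkeeping into the trivial observation that $|\tphi|^2,|\tpsi|^2$ are $(x_R-x_L)$-periodic, so integrals over an interval of length $cT$ are bounded by $\eta$ times the integral over one period. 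The constants come out identical.
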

\begin{proof}
We assume that $\phi$ and $\psi$ are continuous in $Q$; the general case will follow by a density argument.

First, we extend the initial problem to the entire space $\IR$.
Define $\tvE,\tvH,\tphi,\tpsi$ in $\IR\times\IR^+$ as the $2(x_R-x_L)$-periodic functions in $x$ that satisfy $\tvE|_Q=\vE,\tvH|_Q=\vH,\tphi|_Q=\phi,\tpsi|_Q=\psi$ and such that $\tvE$ and $\tpsi$ are odd around $x_L$ (and consequently also around $x_R$), and  $\tvH$ and $\tphi$ are even around the same points, i.e.\
\begin{align*}
\tvE(x_L+x,t)&=-\tvE(x_L-x,t),\qquad
\tvH(x_L+x,t)=\tvH(x_L-x,t),\\
\tphi(x_L+x,t)&=\tphi(x_L-x,t),\qquad
\tpsi(x_L+x,t)=-\tpsi(x_L-x,t),\qquad \forall (x,t)\in\IR\times\IR^+.
\end{align*}
(Note that the absolute values are $(x_R-x_L)$-periodic in $x$.)
Since time derivatives preserve parities and space derivatives swap them, the extended functions $\tvE$ and $\tvH$ are continuous and satisfy the extended initial problem
\begin{align*}
&\der{\tvE}x+\der{(\mu \tvH)}t = \tphi &&\iin \IR\times\IR^+,\\
&\der{\tvH}x+\der{(\epsil \tvE)}t = \tpsi &&\iin \IR\times\IR^+,\\
&\tvE(\cdot,0)=0, \quad
\tvH(\cdot,0)=0 &&\oon \IR.
\end{align*}

Second, we split the right- and the left-propagating components.
Define 
$$u:=\epsil^{1/2} \tvE+\mu^{1/2} \tvH, \quad w:=\epsil^{1/2} \tvE-\mu^{1/2} \tvH,
\qquad\text{so that}\qquad
\tvE=\frac{u+w}{2\epsil^{1/2}}, \quad \tvH=\frac{u-w}{2\mu^{1/2}}.
$$
They satisfy the inhomogeneous transport equations in $\IR\times\IR^+$
$$
\der u x+\der{(c^{-1}u)}t=\epsil^{1/2} \tphi+\mu^{1/2}\tpsi=:f, \qquad
\der w x-\der{(c^{-1}w)}t=\epsil^{1/2} \tphi-\mu^{1/2}\tpsi=:g,
$$
recalling that $(\epsil\mu)^{1/2}=c^{-1}$, so they can be written explicitly with the following representation formula (e.g.\ \cite[section~2.1.2, equation~(5)]{EVA02}, recall that from the assumptions made in the proof, $f$ and $g$ are piecewise continuous)
$$u(x,t)=\int_0^t cf\big(x+c(s-t),s\big)\di s,\qquad
w(x,t)=-\int_0^t cg\big(x-c(s-t),s\big)\di s .$$

We first bound the $L^2$~norm of $u$ and $w$ on horizontal and vertical segments with the data $f,g$; from the triangle inequality $(\tvE,\tvH)$ will be bounded by $\tphi$ and $\tpsi$, and the bound for $\vE$ and $\vH$ will follow.
For all $0\le t\le T$
\begin{align*}
\N{u(\cdot,t)}^2_{L^2(\Omega)}
%&=\int_{x_L}^{x_R} |u(x,t)|^2 \di x\\
&=\int_{x_L}^{x_R}  \Big(\int_0^t cf\big(x+c(s-t),s\big)\di s  \Big)^2 \di x\\
&\le tc^2\int_{x_L}^{x_R}  \int_0^t \big|f\big(x+c(s-t),s\big)\big|^2\di s\di x\\
&= tc^2\int_0^t \int_{x_L+c(s-t)}^{x_R+c(s-t)}  \big|f(y,s)\big|^2\di y\di s\\
&\le
2tc^2\int_0^t \int_{x_L+c(s-t)}^{x_R+c(s-t)} \big(\epsil|\tphi(y,s)|^2+\mu|\tpsi(y,s)|^2\big)\di y\di s
\\
&= 2tc^2 \bigg(\N{\epsil^{1/2}\phi}^2_{L^2(\Omega\times(0,t))}+
\N{\mu^{1/2}\psi}^2_{L^2(\Omega\times(0,t))}\bigg)
%\qquad \text{(by the symmetries of $\tphi$ and $\tpsi$)}
\end{align*}
(the last equality follows from the symmetries of $\tphi$ and $\tpsi$ which ensure the equality of their $L^2$~norms on the rectangle $(x_L,x_R)\times(0,t)$ and on the parallelogram with vertices $(x_L-ct,0),(x_R-ct,0),(x_R,t), (x_L,t)$).
Similarly, for all $x\in\Omega$
\begin{align*}
\N{u(x,\cdot)}^2_{L^2(0,T)}
%&=\int_0^T |u(x,t)|^2 \di t\\
&=\int_0^T  \Big(\int_0^t cf\big(x+c(s-t),s\big)\di s  \Big)^2 \di t\\
&\le c^2\int_0^T  t\int_0^t \big|f\big(x+c(s-t),s\big)\big|^2\di s\di t\\
&= c^2\int_0^T \int_s^T t\big|f\big(x+c(s-t),s\big)\big|^2\di t\di s\\
&= c^2\int_0^T \int_{x-c(T-s)}^{x} \Big(\frac{x-y}c+s\Big) \big|f(y,s)\big|^2\frac1c\di y\di s\\
&\le 2Tc\int_0^T \int_{x-c(T-s)}^{x}\big(\epsil|\tphi(y,s)|^2+\mu|\tpsi(y,s)|^2\big)\di y\di s\\
&\le 2Tc \bigg(\N{\epsil^{1/2}\tphi}^2_{L^2((x-cT,x)\times(0,T))}
+\N{\mu^{1/2}\tpsi}^2_{L^2((x-cT,x)\times(0,T))}\bigg)\\
&\le 2Tc \bigg\lceil\frac{cT}{x_R-x_L}\bigg\rceil \bigg(\N{\epsil^{1/2}\phi}^2_{L^2(Q)}+\N{\mu^{1/2}\psi}^2_{L^2(Q)}\bigg),
\end{align*}
where $\lceil cT/(x_R-x_L)\rceil$ is the number of times $Q$ must be replicated in the $x$ direction to cover the left domain of dependence of $\{x\}\times(0,T)$.
%(integrating on a triangle with basis cT and height T) 

Analogous bounds can be proved for the left-propagating term $w$.

Going back to the solution of the auxiliary problem, we obtain
(using $\epsil\vE^2\le(u^2+w^2)/2$, $\mu\vH^2\le(u^2+w^2)/2$ and summing over all vertical and horizontal segments)
\begin{align*}
&\N{\epsil^{1/2}\vE}^{2}_{L^2(\Fhor\cup{{\FT}})}
+\N{\mu^{1/2}\vH}^{2}_{L^2(\Fhor\cup{{\FT}})}\\
&\hspace{10mm}+\N{{\beta^{-1/2}}\vE}^{2}_{L^2(\Fver)}
+\N{{\alpha^{-1/2}}\vH}^{2}_{L^2(\Fver\cup\FL\cup\FR)}
\\
&\le \N{u}^{2}_{L^2(\Fhor\cup\FT)}
+\N{w}^{2}_{L^2(\Fhor\cup\FT)}%\\&
+\N{\gamma^{1/2} u}^{2}_{L^2(\Fver\cup\FL\cup\FR)}
+\N{\gamma^{1/2} w}^{2}_{L^2(\Fver\cup\FL\cup\FR)}
\\
&\le4Tc\big(cN_{\mathrm{hor}} + \lceil cT/(x_R-x_L)\rceil
\gamma N_{\mathrm{ver}}\big)
\bigg(\N{\epsil^{1/2}\phi}^2_{L^2(Q)}+\N{\mu^{1/2}\psi}^2_{L^2(Q)}\bigg).
\end{align*}
\end{proof}

\end{document}